\documentclass[12pt,a4paper]{article}
\setlength{\parindent}{22pt}
\setlength{\parskip}{5pt}
\setlength{\baselineskip}{9pt}
\setlength{\textheight}{22.5true cm}
\setlength{\textwidth}{16true cm}
\oddsidemargin 0pt
\raggedbottom
\headsep=0pt
\usepackage{amsfonts}
\usepackage{amssymb}
\usepackage{mathrsfs}
\usepackage{amsmath}
\usepackage{amsthm}
\usepackage{enumerate}
\usepackage{cite}
\usepackage{xypic}
\usepackage{amscd}
\allowdisplaybreaks
\pagestyle{plain}

\newtheorem{lemma}{Lemma}[section]
\newtheorem{theorem}[lemma]{Theorem}
\newtheorem{remark}[lemma]{Remark}
\newtheorem{proposition}[lemma]{Proposition}
\newtheorem{definition}[lemma]{Definition}
\newtheorem{corollary}[lemma]{Corollary}

\bibliographystyle{plain}

\begin{document}

\title{\textbf{3-ary Hom-Lie superalgebras induced Hom-Lie superalgebras}
\author{ Baoling Guan$^{1}$,  Liangyun Chen$^{2}$, Bing Sun$^{2}$
 \date{{\small {$^1$ College of Sciences, Qiqihar
University, Qiqihar 161006, China}\\{\small {$^2$ School of Mathematics and Statistics, Northeast Normal
 University,\\
Changchun 130024, China}}}}}}

\date{ }
\maketitle
\begin{quotation}
\small\noindent \textbf{Abstract}:
The purpose of this paper is to study the relationships between a Hom-Lie superalgebra and its induced 3-ary-Hom-Lie superalgebra. We provide an overview of the theory
and explore the structure properties such as ideals, center, derived series, solvability,
nilpotency, central extensions, and the cohomology.

\noindent{\textbf{Keywords}}: Hom-Lie superalgebras; 3-ary Hom-Lie superalgebras; solvable; central extension; cohomology.

 \small\noindent \textbf{Mathematics
Subject Classification 2010}: 17A40, 17B70, 17B30, 17B55
\renewcommand{\thefootnote}{\fnsymbol{footnote}}
 \footnote[0]{Corresponding
author(L. Chen): chenly640@nenu.edu.cn.}
 \footnote[0]{Supported by NNSF
of China (Nos. 11171055 and 11471090).}
\end{quotation}
\setcounter{section}{0}

\section{Introduction}

   % In this paper, we investigate 3-ary Hom-Lie superalgebras constructed from Hom-Lie superalgebras
%and generalized trace maps.
%They are called 3-ary Hom-Lie superalgebras induced by Hom-Lie superalgebras.
Ternary Lie algebras appeared first in Nambu¡¯s generalization of Hamiltonian mechanics
\cite{ny} which use a generalization of Poisson algebras with a ternary bracket. The algebraic
formulation of Nambu mechanics is due to Takhtajan while the structure of $n$-Lie algebra was
studied by Filippov \cite{fvt},  then completed by Kasymov in \cite{kshm}, where solvability and nilpotency
properties were studied.
The cohomology of $n$-Lie algebras, generalizing the Chevalley-Eilenberg Lie algebras cohomology,
was first introduced by Takhtajan \cite{tla} in its simplest form, later a complex adapted
to the study of formal deformations was introduced by Gautheron \cite{gph}, then reformulated
by Daletskii and Takhtajan  \cite{dt} using the notion of base Leibniz algebra of an $n$-Lie algebra. The theory of
cohomology for  first-class $n$-Lie algebras can be found in \cite{ma}.
In \cite{akms}, the structure and cohomology of 3-Lie algebras induced by Lie algebras have been
investigated.
%In \cite{almy}, the authors introduced a realization of the quantum Nambu bracket in terms of
%matrices (using the commutator and the trace of matrices). This construction was generalized
%in \cite{ams1} to the case of any Lie algebra where the commutator is replaced by the Lie
%bracket, and the matrix trace is replaced by linear forms having similar properties. One
%obtains ternary brackets which define 3-Lie algebras, called 3-Lie algebras induced by Lie
%algebras.
In \cite{ams3}, generalizations of $n$-ary algebras of Lie type and associative type by twisting
the identities using linear maps have been introduced. These generalizations include $n$-ary
Hom-algebra structures generalizing the $n$-ary algebras of Lie type including $n$-ary Nambu
algebras, $n$-ary Nambu-Lie algebras and $n$-ary Lie algebras, and $n$-ary algebras of associative
type including $n$-ary totally associative and $n$-ary partially associative algebras. In \cite{ams1},
a method was demonstrated of how to construct ternary multiplications from the binary
multiplication of a Hom-Lie algebra, a linear twisting map, and a trace function satisfying
certain compatibility conditions; and it was shown that this method can be used to construct
ternary Hom-Nambu-Lie algebras from Hom-Lie algebras. This construction was generalized
to $n$-Lie algebras and $n$-Hom-Nambu-Lie algebras in  \cite{ams2}.
%where the authors presented a
%construction of ($n+1$)-Hom-Nambu-Lie algebras from $n$-Hom-Nambu-Lie algebras equipped with
%a generalized trace function, shown that implications of the compatibility conditions, that are
%necessary for this construction, can be understood in terms of the kernel of the trace function
%and the range of the twisting maps and investigated the possibility of defining ($n + k$)-Lie
%algebras from $n$-Lie algebras and a $k$-form satisfying certain conditions.

The reference \cite{av} constructed super 3-Lie algebras(that is, 3-ary Lie superalgebras)  by super Lie
algebras(that is, Lie superalgebras). The reference \cite{aas} constructed $(n + 1)$-Hom-Lie algebras by $n$-Hom-Lie algebras.  Inspired by the references \cite{aas} and \cite{av},
the paper generalizes them to the case of Hom-superalgebra. Its aim is to study the relationships between a
Hom-Lie superalgebra and its
induced 3-ary-Hom-Lie superalgebra.   We explore the structure properties of objects such as ideals,
center, derived series, solvability, nilpotency, central extensions, and the cohomology.
In Section 2, we recall the basic notions for Hom-Lie superalgebras
and the construction of 3-ary-Hom-Lie superalgebras induced by Hom-Lie superalgebras, we also give a new
construction theorem for such algebras and give some basic properties. In Section 3, we define
the notion of solvability and nilpotency for 3-ary-Hom-Lie superalgebras and discuss solvability and
nilpotency of 3-ary-Hom-Lie superalgebras induced by Hom-Lie superalgebras. In Section 4, we
recall the definition and main properties of central extensions of Hom-Lie superalgebras, and
then we study central extensions of 3-ary-Hom-Lie superalgebras induced by Hom-Lie superalgebras.
The Section 5 is dedicated to study the corresponding cohomology.

\section{Preliminaries}

\subsection{3-ary Lie superalgebras}
 In \cite{av}, let $V=V_{\bar{0}}\oplus V_{\bar{1}}$ be a  finite-dimensional $\mathbb{Z}_{2}$-graded vector space. If $v\in V$ is a homogenous element, then its degree will be denoted by $|v|,$ where $|v|\in \mathbb{Z}_{2}$ and $\mathbb{Z}_{2}=\{\bar{0},\bar{1}\}.$
 Let $\mathrm{End}V$ be the $\mathbb{Z}_{2}$-graded vector space of endomorphisms of a $\mathbb{Z}_{2}$-graded vector space $V=V_{\bar{0}}\oplus V_{\bar{1}}.$
 The composition of two endomorphisms $a\circ b$ determines the structure of superalgebra in $\mathrm{End}V,$ and the graded binary commutator $[a,b]=a\circ b-(-1)^{|a||b|}b\circ a$ induces the structure of  Lie superalgebras  in $\mathrm{End}V.$ The supertrace of an endomorphism $a: V\rightarrow V$ can be defined
 by
\[
\mathrm{str}(a)=\left\{
\begin{array}{ll}
Tr(a|_{V_{0}})-Tr(a|_{V_{1}}),& \mbox{if}\ a \ \mbox{is even};\\
0,& \mbox{if}\ a\ \mbox{is odd}.
\end{array}
\right.
\]
For any endomorphisms $\sigma,\tau,$ it holds $\mathrm{str}([\sigma,\tau])=0.$

\begin{definition} {\rm\cite{{{CK}}}}\,  A $\mathbb{Z}_{2}$-graded vector space $\mathfrak{g}=\mathfrak{g}_{\bar{0}}\oplus \mathfrak{g}_{\bar{1}}$ is said to be a 3-ary Lie superalgebra, if it is endowed with  a
trilinear map (bracket) $[\cdot,\cdot,\cdot]:\mathfrak{g}\times \mathfrak{g}\times \mathfrak{g}\rightarrow\mathfrak{g}.$   If it satisfies the
following conditions:
\begin{align*}
&|[x_{1},x_{2},x_{3}]|=|x_{1}|+|x_{2}|+|x_{3}|;
\\&[x_{1},x_{2},x_{3}]=-(-1)^{|x_{1}||x_{2}|}[x_{2},x_{1},x_{3}],\,[x_{1},x_{2},x_{3}]=-(-1)^{|x_{2}||x_{3}|}[x_{1},x_{3},x_{2}];
\\&[x,y,[z,u,v]]=[[x,y,z],u,v]
\\&+(-1)^{|z|(|x|+|y|)}[z,[x,y,u],v]
+(-1)^{(|z|+|u|)(|x|+|y|)}[z,u,[x,y,v]].
\end{align*}
where $x,y,z,u,v\in \mathfrak{g}$ are homogeneous elements and $|x|$ is the $\mathbb{Z}_{2}$-degree of the homogeneous element $x$ in $\mathfrak{g}.$
\end{definition}

\subsection{$3$-ary multiplicative Hom-Lie superalgebras induced by Hom-Lie superalgebras}

\begin{definition} {\rm\cite{afa}}\,
 A Hom-Lie superalgebra is a triple $(\mathfrak{g}, [\cdot,\cdot], \alpha)$
consisting of a $\mathbb{Z}_{2}$-graded vector space $\mathfrak{g}=\mathfrak{g}_{\bar{0}}\oplus \mathfrak{g}_{\bar{1}},$ an even bilinear map $[\cdot,\cdot]: \Lambda^{2}\mathfrak{g}\rightarrow \mathfrak{g}$ and an even homomorphism
$\alpha: \mathfrak{g}\rightarrow \mathfrak{g}$ satisfying the following skew-supersymmetry and Hom-Jacobi identity:
$$[x, y]=-(-1)^{|x||y|}[y, x];$$
$$(-1)^{|x||z|}[\alpha(x), [y, z]]+(-1)^{|y||x|}[\alpha(y), [z, x]]+(-1)^{|z||y|}[\alpha(z), [x, y]]=0$$
where $x, y$ and $z$ are homogeneous elements in $\mathfrak{g},$ denote by $|x|$ the degree of a homogeneous element $x.$

 A $\mathbb{Z}_{2}$-graded vector space $I\subseteq \mathfrak{g}$ is a Hom-subalgebra
of $(\mathfrak{g},[\cdot,\cdot],\alpha)$  if $\alpha(I) \subseteq I$  and
$I$ is closed under
 the bracket operation  $[\cdot,\cdot]$, i.e.
$[I,I] \subseteq I$.

  A Hom-subalgebra $I$ is called a Hom-ideal of $\mathfrak{g}$  if
$[I,\mathfrak{g}]\subseteq I$.
\end{definition}

\begin{definition} {\rm\cite{{gbl}}}\,  A 3-ary-Hom-Lie superalgebra is a triple $(\mathfrak{g},[\cdot,\cdot,\cdot],\alpha)$ consisting of a $\mathbb{Z}_{2}$-graded vector space $\mathfrak{g}=\mathfrak{g}_{\bar{0}}\oplus \mathfrak{g}_{\bar{1}},$ an even
trilinear map (bracket) $[\cdot,\cdot,\cdot]:\mathfrak{g}\times \mathfrak{g}\times \mathfrak{g}\rightarrow\mathfrak{g}$ and an algebra endomorphism $\alpha=(\alpha_{1},\alpha_{2}), \alpha_{i}: \mathfrak{g}\rightarrow\mathfrak{g}$ is an even
linear map.  If it satisfies the
following conditions:
\begin{align*}
&|[x_{1},x_{2},x_{3}]|=|x_{1}|+|x_{2}|+|x_{3}|;
\\&[x_{1},x_{2},x_{3}]=-(-1)^{|x_{1}||x_{2}|}[x_{2},x_{1},x_{3}],\,[x_{1},x_{2},x_{3}]=-(-1)^{|x_{2}||x_{3}|}[x_{1},x_{3},x_{2}];
\\&[\alpha(x),\alpha(y),[z,u,v]]=[[x,y,z],\alpha(u),\alpha(v)]
\\&+(-1)^{|z|(|x|+|y|)}[\alpha(z),[x,y,u],\alpha(v)]
+(-1)^{(|z|+|u|)(|x|+|y|)}[\alpha(z),\alpha(u),[x,y,v]].
\end{align*}
where $|x|$ is the $\mathbb{Z}_{2}$-degree of the homogeneous element $x$ in $\mathfrak{g}.$
\end{definition}
Suppose that $(\mathfrak{g}, [\cdot,\cdot,\cdot],\alpha_{1}, \alpha_{2})$ is a 3-ary-Hom-Lie superalgebra, if $\alpha=(\alpha_{1},\alpha_{2})$
and $\alpha_{1}=\alpha_{2}=\alpha,$ it is satisfied
$$\alpha[x_{1},x_{2},x_{3}]=[\alpha(x_{1}),\alpha(x_{2}),\alpha(x_{3})], \forall x_{1},x_{2},x_{3}\in \mathfrak{g}.$$
Then $(\mathfrak{g}, [\cdot,\cdot,\cdot]_{\rho},\alpha_{1}, \alpha_{2})$ is called as multiplicative.

\begin{definition}  {\rm\cite{afmasn}}\,
 A representation of the Hom-Lie superalgebra $(\mathfrak{g},[\cdot,\cdot],\alpha)$ on a $\mathbb{Z}_{2}$-graded vector space $V=V_{\bar{0}}\oplus V_{\bar{1}}$ with
respect to $\beta\in \mathfrak{gl}(V)_{\bar{0}}$ is an even linear map $\rho: \mathfrak{g}\rightarrow\mathfrak{gl}(V),$ such that for all $x,y\in \mathrm{hg}(\mathfrak{g}),$ the following
equalities are satisfied:
\begin{eqnarray*}\rho(\alpha(x))\circ\beta &=& \beta\circ\rho(x);\\
\rho([x,y])\circ\beta &=& \rho(\alpha(x))\circ\rho(y)-(-1)^{|x||y|}\rho(\alpha(y))\circ\rho(x).
\end{eqnarray*}
\end{definition}

Let $(\mathfrak{g}, [\cdot,\cdot],\alpha)$ be a Hom-Lie superalgebra and $\rho: \mathfrak{g}\rightarrow \mathrm{gl}(V)$ is a representation of $(\mathfrak{g}, [\cdot,\cdot],\alpha).$  For any homogeneous element $x_{1},x_{2}\in \mathfrak{g},$ we define the 3-ary bracket by
\begin{align*}[x_{1},x_{2},x_{3}]_{\rho}=&\mathrm{str}\rho(x_{1})[x_{2},x_{3}]-(-1)^{|x_{1}||x_{2}|}
\mathrm{str}\rho(x_{2})[x_{1},x_{3}]
+(-1)^{|x_{3}|(|x_{1}|+|x_{2}|)}\mathrm{str}\rho(x_{3})[x_{1},x_{2}].
\end{align*}
\begin{theorem} \label{th1.24}
Let $(\mathfrak{g}, [\cdot,\cdot],\alpha)$ be a  Hom-Lie superalgebra. $\rho: \mathfrak{g}\rightarrow \mathrm{gl}(V)$ is a representation of $(\mathfrak{g}, [\cdot,\cdot],\alpha),$ $\beta: V\rightarrow V$ be an even linear map. If the following conditions are satisfied:
\begin{eqnarray}\mathrm{str}\rho(x)
\mathrm{str}\rho(\alpha(y))&=&\mathrm{str}\rho(x)\mathrm{str}\rho(\alpha(y));\label{eq1}\\
\mathrm{str}\rho(x)
\mathrm{str}\rho(\beta(y))&=&\mathrm{str}\rho(x)\mathrm{str}\rho(\beta(y));\label{eq2}\\
\mathrm{str}\rho(\alpha(x))\beta(y)&=&\mathrm{str}\rho(\beta(x))\alpha(y).\label{eq3}
\end{eqnarray}
for all $x,y\in \mathfrak{g}.$
Then $(\mathfrak{g}, [\cdot,\cdot,\cdot]_{\rho},\alpha,\beta)$ is a 3-ary-Hom-Lie superalgebra, and we say that it is induced by $(\mathfrak{g},[\cdot,\cdot],\alpha),$ it is denoted by $\mathfrak{g}_{\rho}.$
\end{theorem}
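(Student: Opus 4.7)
The plan is to verify in turn the three axioms defining a 3-ary Hom-Lie superalgebra: the parity (grading) condition, the two graded skew-symmetries, and the Hom-Nambu--Filippov identity. In each case I would substitute the defining formula for $[\cdot,\cdot,\cdot]_{\rho}$ and reduce to tools already available: the skew-supersymmetry and Hom-Jacobi identity of $(\mathfrak{g},[\cdot,\cdot],\alpha)$; the representation axioms for $\rho$; the compatibility conditions (\ref{eq1})--(\ref{eq3}); and the basic fact that $\mathrm{str}\rho(x_i)$ is a scalar which vanishes unless $|x_i|=\bar{0}$.

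Parity and the skew-supersymmetries are mostly bookkeeping. For parity, each summand $\mathrm{str}\rho(x_i)\,[x_j,x_k]$ has degree $|x_j|+|x_k|$ and vanishes unless $|x_i|=\bar{0}$, so every contributing term has total degree $|x_1|+|x_2|+|x_3|$. For the two skew-supersymmetries I would swap the relevant pair of arguments, apply the binary identity $[a,b]=-(-1)^{|a||b|}[b,a]$ inside each of the three summands, and verify that the Koszul signs recombine to give exactly $-(-1)^{|x_i||x_{i+1}|}$ times the original expression.

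The bulk of the work is the Hom-Nambu--Filippov identity. Expanding each of the four outer brackets on the two sides using the definition of $[\cdot,\cdot,\cdot]_{\rho}$ produces sums of terms of the shape (product of two supertrace factors)$\cdot$(binary bracket in $\mathfrak{g}$ involving $\alpha$, $\beta$ and one inner bracket). Because the supertrace factors are scalars, I would then apply the Hom-Jacobi identity of $(\mathfrak{g},[\cdot,\cdot],\alpha)$ directly to the remaining binary brackets, grouping terms three at a time. The compatibility conditions (\ref{eq1})--(\ref{eq3}) are precisely what is needed to move $\alpha$ and $\beta$ across the supertrace factors so that matching terms on the two sides carry identical scalar coefficients. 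The standard identity $\mathrm{str}([\sigma,\tau])=0$, combined with the representation axiom for $\rho$, takes care of any residual terms in which a supertrace of a commutator-like expression arises.

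The main obstacle is the combinatorial bookkeeping: each side of the Hom-Nambu--Filippov identity decomposes into on the order of thirty sign-laden terms, and the essential task is to organize them into triples on which the Hom-Jacobi identity of $\mathfrak{g}$ applies, once conditions (\ref{eq1})--(\ref{eq3}) have normalized the positions of $\alpha$ and $\beta$. No conceptual ingredient beyond the Hom-Jacobi identity, the representation axioms, and the three compatibility conditions should be required; the real difficulty lies in tracking Koszul signs correctly throughout the expansion and in matching triples of terms on the left-hand side against triples on the right-hand side.
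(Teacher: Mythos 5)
Your proposal is correct and follows essentially the same route as the paper: verify trilinearity and the two graded skew-supersymmetries by direct substitution, then expand both sides of the Hom-Nambu identity into scalar-times-binary-bracket terms, use conditions (\ref{eq1})--(\ref{eq3}) to normalize the positions of $\alpha$ and $\beta$ in the supertrace factors, and cancel the surviving terms in groups (pairs killed by (\ref{eq1})--(\ref{eq3}), triples killed by the Hom-Jacobi identity of $(\mathfrak{g},[\cdot,\cdot],\alpha)$). The paper's own proof is exactly this expansion-and-matching argument, carried out explicitly only for representative pairs and triples.
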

\begin{proof}
For $x_{2},x_{1},x_{3}\in\mathfrak{g},$ we have
 \begin{align*}
[x_{2},x_{1},x_{3}]_{\rho}=&\mathrm{str}\rho(x_{2})[x_{1},x_{3}]-(-1)^{|x_{1}||x_{2}|}
\mathrm{str}\rho(x_{1})[x_{2},x_{3}]
+(-1)^{|x_{3}|(|x_{1}|+|x_{2}|)}\mathrm{str}\rho(x_{3})[x_{2},x_{1}]
\\=&-(-1)^{|x_{1}||x_{2}|}\{\mathrm{str}\rho(x_{1})[x_{2},x_{3}]-(-1)^{|x_{1}||x_{2}|}\mathrm{str}\rho(x_{2})[x_{1},x_{3}]
\\&+(-1)^{|x_{3}|(|x_{1}|+|x_{2}|)}\mathrm{str}\rho(x_{3})[x_{1},x_{2}]\}
\\=&-(-1)^{|x_{1}||x_{2}|}[x_{1},x_{2},x_{3}]_{\rho}.
\end{align*}
Similarly, $[x_{1},x_{3},x_{2}]_{\rho}=-(-1)^{|x_{2}||x_{3}|}[x_{1},x_{2},x_{3}]_{\rho}.$
Hence $[\cdot,\cdot,\cdot]_{\rho}$ is an skew-supersymmetric map.  It is clear that it is trilinear by construction, one only has to prove that
the Hom-Nambu identity is fulfilled. Expanding the Hom-Nambu identity, that is,
\begin{align*}
&[\alpha(x),\beta(y),[z,u,v]_{\rho}]_{\rho}=[[x,y,z]_{\rho},\alpha(u),\beta(v)]_{\rho}
\\&+(-1)^{|z|(|x|+|y|)}[\alpha(z),[x,y,u]_{\rho},\beta(v)]_{\rho}
+(-1)^{(|z|+|u|)(|x|+|y|)}[\alpha(z),\beta(u),[x,y,v]_{\rho}]_{\rho}.
\end{align*}
gives 24 different terms. Six of these can be grouped into three  pairs as follows:
\begin{align*}
&(-1)^{(|u|+|z|)(|x|+|y|)}[[x,y],\beta(v)](\mathrm{str}\rho(u)\mathrm{str}\rho(\alpha(z))-\mathrm{str}\rho(z)\mathrm{str}\rho(\alpha(u)),
\\&(-1)^{(|u|+|v|+|z|)(|x|+|y|)}[\mathrm{str}\rho(z)(\mathrm{str}\rho(\alpha(v))\beta(u)-(-1)^{|v||u|}\mathrm{str}\rho(\beta(v))\alpha(u)),[x,y]],
\\&(-1)^{(|u|+|v|+|z|)(|x|+|y|)+|v||z|}[(\mathrm{str}\rho(u)\mathrm{str}\rho(\beta(v))-(-1)^{(|v|+|u|)|z|}\mathrm{str}\rho(v)\mathrm{str}\rho(\beta(u)))
[\alpha(z),[x,y]],
\end{align*}
which all vanish separately by (\ref{eq1})-(\ref{eq3}). The remaining 18 terms can be grouped into six  triples of the type
\begin{align*}&-(-1)^{|u||z|}\mathrm{str}\rho(u)\mathrm{str}\rho(\alpha(x))[\beta(y),[z,v]]
+(-1)^{|u|(|y|+|z|)}\mathrm{str}\rho(x)\mathrm{str}\rho(\alpha(u))[[y,u],\beta(v)]
\\&+(-1)^{(|z|+|v|)(|x|+|y|)+|z||u|}\mathrm{str}\rho(x)\mathrm{str}\rho(\beta(u))[\alpha(z),[y,v]].
\end{align*}
By using (\ref{eq3}), one can rewrite this term as
\begin{align*}&-(-1)^{|u||z|}\mathrm{str}\rho(u)\mathrm{str}\rho(\beta(x))[\alpha(y),[z,v]]
+(-1)^{|u|(|y|+|z|)}\mathrm{str}\rho(x)\mathrm{str}\rho(\alpha(u))[[y,z],\beta(v)]
\\&+(-1)^{(|z|+|u|)(|x|+|y|)+|z||u|}\mathrm{str}\rho(x)\mathrm{str}\rho(\beta(u))[\alpha(z),[y,v]],
\end{align*}
and by using (\ref{eq2}) and the Hom-Jacobi identity, one sees that this term vanishes. The remaining five triples of terms
 can be shown to vanish in an analogous way. Hence the Hom-Nambu identity is satisfied.
\end{proof}
\begin{proposition} \label{pro1.26} Let $(A, [\cdot,\cdot]^{'},\alpha_{1})$ and $(B, [\cdot,\cdot]^{''},\beta_{1})$ be Hom-Lie superalgebras. Let $\rho_{1}: A\rightarrow \mathrm{End}(V_{1})$(resp. $\rho_{2}: B\rightarrow \mathrm{End}(V_{2})$) is the reprensentations of $(A, [\cdot,\cdot]^{'},\alpha_{1})$(resp. $(B, [\cdot,\cdot]^{''},\beta_{1}).$ Let $\alpha_{2}$(resp. $\beta_{2})$ be an even linear map $\alpha_{2}: A\rightarrow A$(resp. $\beta_{2}: B\rightarrow B$).
Set $(A, [\cdot,\cdot,\cdot]_{\rho_{1}}^{'},\alpha^{'}=(\alpha_{1},\alpha_{2}))$ (resp. $(B, [\cdot,\cdot,\cdot]_{\rho_{2}}^{''},\beta^{'}=(\beta_{1},\beta_{2})))$ to be the induced $3$-ary-Hom-Lie
superalgebras.  Let $f : A\rightarrow B$ be a Hom-Lie superalgebra homomorphism satisfying $\mathrm{str}\rho_{2}\circ f=\mathrm{str}\rho_{1}$
and $f\circ \alpha_{i}=\beta_{i}\circ f(i=1,2),$ then $f$ is a $3$-ary-Hom-Lie
superalgebras homomorphism of the induced
algebras.
\end{proposition}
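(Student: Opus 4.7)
The plan is to verify the two defining conditions of a $3$-ary-Hom-Lie superalgebra homomorphism: namely that $f$ intertwines the twisting data and that $f$ preserves the induced ternary bracket. Since the induced twisting data on $A$ is the pair $\alpha' = (\alpha_1, \alpha_2)$ and on $B$ is $\beta' = (\beta_1, \beta_2)$, the first condition is exactly $f \circ \alpha_i = \beta_i \circ f$ for $i = 1,2$, which is part of the hypothesis. So this step is immediate and requires no work.

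The bulk of the argument is to check that $f\bigl([x_1,x_2,x_3]'_{\rho_1}\bigr) = [f(x_1),f(x_2),f(x_3)]''_{\rho_2}$ for homogeneous $x_1, x_2, x_3 \in A$. I would proceed by applying $f$ to the explicit defining formula
\begin{align*}
[x_1,x_2,x_3]'_{\rho_1} = \mathrm{str}\rho_1(x_1)[x_2,x_3]' &- (-1)^{|x_1||x_2|}\mathrm{str}\rho_1(x_2)[x_1,x_3]' \\
&+ (-1)^{|x_3|(|x_1|+|x_2|)}\mathrm{str}\rho_1(x_3)[x_1,x_2]',
\end{align*}
using three facts in turn: linearity of $f$ (to move $f$ past the scalars $\mathrm{str}\rho_1(x_i)$), the assumption that $f$ is a Hom-Lie superalgebra homomorphism (to rewrite $f([x_i,x_j]') = [f(x_i),f(x_j)]''$), and the scalar compatibility $\mathrm{str}\rho_2(f(x_i)) = \mathrm{str}\rho_1(x_i)$ (to convert the supertrace factors to those defined with $\rho_2$). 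Since a Hom-Lie superalgebra homomorphism is by convention even, we have $|f(x_i)| = |x_i|$, so all the sign factors $(-1)^{|x_i||x_j|}$ and $(-1)^{|x_3|(|x_1|+|x_2|)}$ carry over verbatim. Collecting the three resulting terms reassembles precisely the defining formula of $[f(x_1),f(x_2),f(x_3)]''_{\rho_2}$.

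There is no real obstacle here; the argument is a direct linear substitution. The only points that need mild care are (i) keeping the grading of $f$ straight so that the parity prefactors transport correctly, and (ii) confirming that the scalar $\mathrm{str}\rho_1(x_i)$ can be factored out of $f$ before the compatibility $\mathrm{str}\rho_2 \circ f = \mathrm{str}\rho_1$ is applied. Both are automatic given the standing definitions. Hence the proof reduces to writing out the three terms, relabeling supertraces and brackets through $f$, and reading off the desired equality.
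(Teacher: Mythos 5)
Your proposal is correct and follows essentially the same route as the paper: apply $f$ to the defining formula of $[\cdot,\cdot,\cdot]'_{\rho_1}$, use linearity to factor out the supertrace scalars, invoke the homomorphism property to rewrite $f([x_i,x_j]')=[f(x_i),f(x_j)]''$, and then use $\mathrm{str}\rho_2\circ f=\mathrm{str}\rho_1$ to reassemble $[f(x_1),f(x_2),f(x_3)]''_{\rho_2}$, with the compatibility $f\circ\alpha_i=\beta_i\circ f$ handled exactly as you describe. No gaps.
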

\begin{proof}
For all $x_{1},x_{2},x_{3}\in A,$ we have:
\begin{align*}f([x_{1},x_{2},x_{3}]_{\rho_{1}}^{'})=&f\{\mathrm{str}\rho_{1}(x_{1})[x_{2},x_{3}]^{'}-(-1)^{|x_{1}||x_{2}|}
\mathrm{str}\rho_{1}(x_{2})[x_{1},x_{3}]^{'}
\\&+(-1)^{|x_{3}|(|x_{1}|+|x_{2}|)}\mathrm{str}\rho_{1}(x_{3})[x_{1},x_{2}]^{'}\}
\\=&\mathrm{str}\rho_{1}(x_{1})[f(x_{2}),f(x_{3})]^{''}-(-1)^{|x_{1}||x_{2}|}
\mathrm{str}\rho_{1}(x_{2})[f(x_{1}),f(x_{3})]^{''}
\\&+(-1)^{|x_{3}|(|x_{1}|+|x_{2}|)}\mathrm{str}\rho_{1}(x_{3})[f(x_{1}),f(x_{2})]^{''}
\\=&\mathrm{str}\rho_{2}(f(x_{1}))[f(x_{2}),f(x_{3})]^{''}-(-1)^{|x_{1}||x_{2}|}
\mathrm{str}\rho_{2}(f(x_{2}))[f(x_{1}),f(x_{3})]^{''}
\\&+(-1)^{|x_{3}|(|x_{1}|+|x_{2}|)}\mathrm{str}\rho_{2}(f(x_{3}))[f(x_{1}),f(x_{2})]^{''}
\\=&[f(x_{1}),f(x_{2}),f(x_{3})]_{\rho_{2}}^{''}.
\end{align*}
We have $f\circ \alpha_{1}=\beta_{1}\circ f,$ since $f$ is a Hom-Lie superalgebra homomorphism,
and we also have $f\circ \alpha_{2}=\beta_{2}\circ f.$ This means that $f$ is a $3$-ary-Hom-Lie
superalgebras homomorphism of
$(A, [\cdot,\cdot,\cdot]_{\rho_{1}}^{'},\alpha^{'}=(\alpha_{1},\alpha_{2}))$
and $(B, [\cdot,\cdot,\cdot]_{\rho_{2}}^{''},\beta^{'}=(\beta_{1},\beta_{2})).$
\end{proof}
The new theorem for constructing $3$-ary-Hom-Lie
superalgebras induced by Hom-Lie superalgebras
can be formulated as follows:

\begin{theorem} \label{th1.27}
Let $(\mathfrak{g}, [\cdot,\cdot],\alpha)$ be a multiplicative Hom-Lie superalgebra and $\rho: \mathfrak{g}\rightarrow \mathrm{gl}(V)$ is a representation of $(\mathfrak{g}, [\cdot,\cdot],\alpha)$ satisfying $\mathrm{str}\rho\circ \alpha=\mathrm{str}\rho,$ then $(\mathfrak{g}, [\cdot,\cdot,\cdot]_{\rho},\alpha)$ is a multiplicative 3-ary-Hom-Lie superalgebra.
\end{theorem}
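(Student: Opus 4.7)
The plan is to deduce this from Theorem \ref{th1.24} by taking $\beta=\alpha$, and then to verify multiplicativity directly from the explicit formula for $[\cdot,\cdot,\cdot]_{\rho}$. First I would observe that the conclusion splits into two parts: (i) that $(\mathfrak{g},[\cdot,\cdot,\cdot]_{\rho},\alpha,\alpha)$ is a 3-ary-Hom-Lie superalgebra in the sense of the definition with $\alpha_{1}=\alpha_{2}=\alpha$, and (ii) that it is multiplicative, i.e.\ that $\alpha[x_{1},x_{2},x_{3}]_{\rho}=[\alpha(x_{1}),\alpha(x_{2}),\alpha(x_{3})]_{\rho}$.

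For part (i), I would specialize the hypotheses (\ref{eq1})--(\ref{eq3}) of Theorem \ref{th1.24} to the case $\beta=\alpha$. Conditions (\ref{eq1}) and (\ref{eq2}) are tautologies as written, so there is nothing to check. Condition (\ref{eq3}) reads $\mathrm{str}\rho(\alpha(x))\alpha(y)=\mathrm{str}\rho(\alpha(x))\alpha(y)$ after the specialization, which is again automatic. Thus Theorem \ref{th1.24} applies and gives a 3-ary-Hom-Lie superalgebra structure $(\mathfrak{g},[\cdot,\cdot,\cdot]_{\rho},\alpha,\alpha)$. (The assumption $\mathrm{str}\rho\circ\alpha=\mathrm{str}\rho$ is not needed at this stage; it will only enter in part (ii).)

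For part (ii), I would compute $\alpha[x_{1},x_{2},x_{3}]_{\rho}$ by applying $\alpha$ term by term to the defining expression
\begin{align*}
[x_{1},x_{2},x_{3}]_{\rho}=\mathrm{str}\rho(x_{1})[x_{2},x_{3}]&-(-1)^{|x_{1}||x_{2}|}\mathrm{str}\rho(x_{2})[x_{1},x_{3}]\\
&+(-1)^{|x_{3}|(|x_{1}|+|x_{2}|)}\mathrm{str}\rho(x_{3})[x_{1},x_{2}].
\end{align*}
Pulling the supertrace scalars through $\alpha$ by linearity and then using multiplicativity of $\alpha$ on the underlying Hom-Lie superalgebra, each bracket $\alpha[x_{i},x_{j}]$ becomes $[\alpha(x_{i}),\alpha(x_{j})]$. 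At this stage the only remaining obstruction is that the coefficients are $\mathrm{str}\rho(x_{i})$ rather than $\mathrm{str}\rho(\alpha(x_{i}))$; the hypothesis $\mathrm{str}\rho\circ\alpha=\mathrm{str}\rho$ lets me rewrite $\mathrm{str}\rho(x_{i})=\mathrm{str}\rho(\alpha(x_{i}))$ and the right-hand side is then exactly $[\alpha(x_{1}),\alpha(x_{2}),\alpha(x_{3})]_{\rho}$.

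There is no real obstacle here: everything is a direct specialization of Theorem \ref{th1.24} together with a one-line bookkeeping computation. The only point that deserves care is remembering that the $\mathbb{Z}_{2}$-signs in the defining formula depend only on the degrees $|x_{i}|$, which are preserved by the even map $\alpha$, so the sign factors on both sides agree and no parity accounting is needed beyond what is already present in the formula.
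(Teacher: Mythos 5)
Your proof is correct, but it takes a genuinely different and much more economical route than the paper. The paper proves Theorem \ref{th1.27} by re-expanding the Hom-Nambu identity from scratch: it writes out the left-hand side $L$ and right-hand side $R$ of the identity as sums of twelve terms each and checks $L=R$ by direct comparison, then invokes Proposition \ref{pro1.26} (with $f=\alpha$) to get multiplicativity. You instead observe that the theorem is the specialization $\beta=\alpha$ of Theorem \ref{th1.24}, which disposes of the Hom-Nambu identity in one line, and then verify $\alpha[x_{1},x_{2},x_{3}]_{\rho}=[\alpha(x_{1}),\alpha(x_{2}),\alpha(x_{3})]_{\rho}$ by pulling $\alpha$ through the defining formula --- which is exactly the computation in Proposition \ref{pro1.26}, done by hand. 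Both parts are sound. One caveat: you remark that conditions (\ref{eq1}) and (\ref{eq2}) of Theorem \ref{th1.24}~``are tautologies as written, so there is nothing to check,'' and that $\mathrm{str}\rho\circ\alpha=\mathrm{str}\rho$ is not needed for part (i). The conditions are indeed printed as tautologies, but this is evidently a misprint: the proof of Theorem \ref{th1.24} uses them to kill terms of the form $\mathrm{str}\rho(u)\,\mathrm{str}\rho(\alpha(z))-\mathrm{str}\rho(z)\,\mathrm{str}\rho(\alpha(u))$, so the intended hypotheses are $\mathrm{str}\rho(x)\,\mathrm{str}\rho(\alpha(y))=\mathrm{str}\rho(\alpha(x))\,\mathrm{str}\rho(y)$ and its analogue with $\beta$. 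Your argument survives this correction, because with $\beta=\alpha$ these corrected conditions follow immediately from $\mathrm{str}\rho\circ\alpha=\mathrm{str}\rho$ (both sides become $\mathrm{str}\rho(x)\,\mathrm{str}\rho(y)$), and (\ref{eq3}) is then trivial --- but the hypothesis $\mathrm{str}\rho\circ\alpha=\mathrm{str}\rho$ \emph{is} used in part (i) once the misprint is repaired, so you should not advertise it as dispensable there. With that adjustment your derivation is complete and arguably preferable to the paper's, since it reuses Theorem \ref{th1.24} rather than duplicating its computation.
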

\begin{proof} We know that $[\cdot,\cdot,\cdot]_{\rho}$ is an trilinear skew-supersymmetric map. We next show
that it satisfies the Hom-Nambu identity, that is,
\begin{align*}
&[\alpha(x),\alpha(y),[z,u,v]_{\rho}]_{\rho}=[[x,y,z]_{\rho},\alpha(u),\alpha(v)]_{\rho}
\\&+(-1)^{|z|(|x|+|y|)}[\alpha(z),[x,y,u]_{\rho},\alpha(v)]_{\rho}
+(-1)^{(|z|+|u|)(|x|+|y|)}[\alpha(z),\alpha(u),[x,y,v]_{\rho}]_{\rho}.
\end{align*}
Let $L$ be its left-hand side, and $R$ its right-hand
side:
\begin{align*}
&L=[\alpha(x),\alpha(y),[z,u,v]_{\rho}]_{\rho}
\\=&[\alpha(x),\alpha(y),\mathrm{str}\rho(z)[u,v]-(-1)^{|u||z|}
\mathrm{str}\rho(u)[z,v]+(-1)^{|v|(|z|+|u|)}\mathrm{str}\rho(v)[z, u]]_{\rho}
\\=&\mathrm{str}\rho(z)[\alpha(x),\alpha(y),[u,v]]_{\rho}-(-1)^{|u||z|}
\mathrm{str}\rho(u)[\alpha(x),\alpha(y),[z,v]]_{\rho}
\\&+(-1)^{|v|(|z|+|u|)}\mathrm{str}\rho(v)[\alpha(x),\alpha(y),[z, u]]_{\rho}.
\\=&\mathrm{str}\rho(z)\{\mathrm{str}\rho(\alpha(x))[\alpha(y),[u,v]]-(-1)^{|y||x|}
\mathrm{str}\rho(\alpha(y))[\alpha(x),[u,v]]\}
\\&-(-1)^{|u||z|}\mathrm{str}\rho(u)\{\mathrm{str}\rho(\alpha(x))[\alpha(y),[z,v]]-(-1)^{|y||x|}
\mathrm{str}\rho(\alpha(y))[\alpha(x),[z,v]]\}
\\&+(-1)^{|v|(|z|+|u|)}\mathrm{str}\rho(v)\{\mathrm{str}\rho(\alpha(x))[\alpha(y),[z,u]]-(-1)^{|y||x|}
\mathrm{str}\rho(\alpha(y))[\alpha(x),[z,u]]\}
\\=&\mathrm{str}\rho(z)\mathrm{str}\rho(\alpha(x))[\alpha(y),[u,v]]-(-1)^{|y||x|}\mathrm{str}\rho(z)
\mathrm{str}\rho(\alpha(y))[\alpha(x),[u,v]]
\\&-(-1)^{|u||z|}\mathrm{str}\rho(u)\mathrm{str}\rho(\alpha(x))[\alpha(y),[z,v]]
\\&+(-1)^{|u||z|+|y||x|}
\mathrm{str}\rho(u)\mathrm{str}\rho(\alpha(y))[\alpha(x),[z,v]]
\\&+(-1)^{|v|(|z|+|u|)}\mathrm{str}\rho(v)\mathrm{str}\rho(\alpha(x))[\alpha(y),[z,u]]
\\&-(-1)^{|v|(|z|+|u|)+|y||x|}
\mathrm{str}\rho(v)\mathrm{str}\rho(\alpha(y))[\alpha(x),[z,u]]
\\=&-(-1)^{|y|(|u|+|v|)}\mathrm{str}\rho(z)\mathrm{str}\rho(\alpha(x))[\alpha(u),[v,y]]
\\&-(-1)^{|v|(|y|+|u|)}\mathrm{str}\rho(z)\mathrm{str}\rho(\alpha(x))[\alpha(v),[y,u]]
\\&+(-1)^{|x|(|y|+|u|+|v|)}\mathrm{str}\rho(z)\mathrm{str}\rho(\alpha(y))[\alpha(u),[v,x]]
\\&+(-1)^{|x||y|+|v|(|u|+|x|)}\mathrm{str}\rho(z)\mathrm{str}\rho(\alpha(y))[\alpha(v),[x,u]]
\\&+(-1)^{|u||z|+|y|(|z|+|v|)}\mathrm{str}\rho(u)\mathrm{str}\rho(\alpha(x))[\alpha(z),[v,y]]
\\&+(-1)^{|u||z|+|v|(|z|+|y|)}
\mathrm{str}\rho(u)\mathrm{str}\rho(\alpha(x))[\alpha(v),[y,z]]
\\&-(-1)^{|u||z|+|x||y|+|x|(|z|+|v|)}\mathrm{str}\rho(u)\mathrm{str}\rho(\alpha(y))[\alpha(z),[v,x]]
\\&-(-1)^{|u||z|+|x||y|+|v|(|z|+|x|)}
\mathrm{str}\rho(u)\mathrm{str}\rho(\alpha(y))[\alpha(v),[x,z]]
\\&-(-1)^{(|v|+|y|)(|z|+|u|)}
\mathrm{str}\rho(v)\mathrm{str}\rho(\alpha(x))[\alpha(z),[u,y]]
\\&-(-1)^{|v|(|z|+|u|)+|u|(|y|+|z|)}
\mathrm{str}\rho(v)\mathrm{str}\rho(\alpha(x))[\alpha(u),[y,z]]
\\&+(-1)^{(|v|+|x|)(|z|+|u|)+|x||y|}
\mathrm{str}\rho(v)\mathrm{str}\rho(\alpha(y))[\alpha(z),[u,x]]
\\&+(-1)^{|v|(|z|+|u|)+|x||y|+|u|(|x|+|z|)}
\mathrm{str}\rho(v)\mathrm{str}\rho(\alpha(y))[\alpha(u),[x,z]]
\\=&(-1)^{(|u|+|z|)(|x|+|y|)}\mathrm{str}\rho(x)\mathrm{str}\rho(\alpha(z))[\alpha(u),[y,v]]
\\&+(-1)^{|z|(|x|+|y|)}\mathrm{str}\rho(x)\mathrm{str}\rho(\alpha(z))[[y,u],\alpha(v)]
\\&-(-1)^{(|u|+|z|)(|x|+|y|)+|x||y|}\mathrm{str}\rho(y)\mathrm{str}\rho(\alpha(z))[\alpha(u),[x,v]]
\\&-(-1)^{|z|(|x|+|y|)+|x||y|}\mathrm{str}\rho(y)\mathrm{str}\rho(\alpha(z))[[x,u],\alpha(v)]
\\&-(-1)^{|u||z|+(|x|+|y|)(|z|+|u|)}\mathrm{str}\rho(x)\mathrm{str}\rho(\alpha(u))[\alpha(z),[y,v]]
\\&-(-1)^{|u|(|z|+|y|)}\mathrm{str}\rho(x)\mathrm{str}\rho(\alpha(u))[[y,z],\alpha(v)]
\\&+(-1)^{|u||z|+|x||y|+(|x|+|y|)(|z|+|u|)}\mathrm{str}\rho(y)\mathrm{str}\rho(\alpha(u))[\alpha(z),[x,v]]
\\&+(-1)^{|x||y|+|u|(|z|+|x|)}\mathrm{str}\rho(y)\mathrm{str}\rho(\alpha(u))[[x,z],\alpha(v)]
\\&+(-1)^{|z|(|x|+|y|)+|v|(|z|+|y|+|u|)}
\mathrm{str}\rho(x)\mathrm{str}\rho(\alpha(v))[\alpha(z),[y,u]]
\\&+(-1)^{|v|(|z|+|y|+|u|)}
\mathrm{str}\rho(x)\mathrm{str}\rho(\alpha(v))[[y,z],\alpha(u)]
\\&-(-1)^{|z|(|x|+|y|)+|v|(|z|+|x|+|u|)+|x||y|}
\mathrm{str}\rho(y)\mathrm{str}\rho(\alpha(v))[\alpha(z),[x,u]]
\\&-(-1)^{|v|(|z|+|u|+|x|)+|x||y|}
\mathrm{str}\rho(y)\mathrm{str}\rho(\alpha(v))[[x,z],\alpha(u)]
\end{align*}
Similarly, one gets
\begin{align*}R=&[[x,y,z]_{\rho},\alpha(u),\alpha(v)]_{\rho}
+(-1)^{|z|(|x|+|y|)}[\alpha(z),[x,y,u]_{\rho},\alpha(v)]_{\rho}
\\&+(-1)^{(|z|+|u|)(|x|+|y|)}[\alpha(z),\alpha(u),[x,y,v]_{\rho}]_{\rho}
\\=&-(-1)^{|u|(|y|+|z|)}\mathrm{str}\rho(x)\mathrm{str}\rho(\alpha(u))[[y,z],\alpha(v)]
\\&+(-1)^{|v|(|y|+|z|+|u|)}\mathrm{str}\rho(x)\mathrm{str}\rho(\alpha(v))[[y,z],\alpha(u)]
\\&+(-1)^{|x||y|+|u|(|x|+|z|)}\mathrm{str}\rho(y)\mathrm{str}\rho(\alpha(u))[[x,z],\alpha(v)]
\\&-(-1)^{|x||y|+|v|(|u|+|z|+|x|)}\mathrm{str}\rho(y)\mathrm{str}\rho(\alpha(v))[[x,z],\alpha(u)]
\\&-(-1)^{(|x|+|y|)(|z|+|u|)}\mathrm{str}\rho(z)\mathrm{str}\rho(\alpha(u))[[x,y],\alpha(v)]
\\&+(-1)^{(|z|+|v|)(|x|+|y|)+|v||u|}
\mathrm{str}\rho(z)\mathrm{str}\rho(\alpha(v))[[x,y],\alpha(u)]
\\&+(-1)^{|z|(|x|+|y|)}\mathrm{str}\rho(x)\mathrm{str}\rho(\alpha(z))[[y,u],\alpha(v)]
\\&+(-1)^{|z|(|x|+|y|)+|v|(|z|+|y|+|u|)}
\mathrm{str}\rho(x)\mathrm{str}\rho(\alpha(v))[\alpha(z),[y,u]]
\\&-(-1)^{(|z|(|x|+|y|)+|x||y|}
\mathrm{str}\rho(y)\mathrm{str}\rho(\alpha(z))[[x,u],\alpha(v)]
\\&-(-1)^{|z|(|x|+|y|)+|x||y|+|v|(|x|+|u|+|z|)}
\mathrm{str}\rho(y)\mathrm{str}\rho(\alpha(v))[\alpha(z),[x,u]]
\\&+(-1)^{(|y|+|x|)(|z|+|u|)}
\mathrm{str}\rho(u)\mathrm{str}\rho(\alpha(z))[[x,y],\alpha(v)]
\\&+(-1)^{(|z|+|u|)(|y|+|x|)+|v|(|x|+|z|+|y|)}
\mathrm{str}\rho(u)\mathrm{str}\rho(\alpha(v))[\alpha(z),[x,y]]
\\&+(-1)^{(|u|+|z|)(|x|+|y|)}\mathrm{str}\rho(x)\mathrm{str}\rho(\alpha(z))[\alpha(u),[y,v]]
\\&-(-1)^{(|u|+|z|)(|x|+|y|)+|u||z|}\mathrm{str}\rho(x)\mathrm{str}\rho(\alpha(u))[\alpha(z),[y,v]]
\\&-(-1)^{(|u|+|z|)(|x|+|y|)+|x||y|}\mathrm{str}\rho(y)\mathrm{str}\rho(\alpha(z))[\alpha(u),[x,v]]
\\&+(-1)^{(|u|+|z|)(|x|+|y|)+|x||y|+|u||z|}\mathrm{str}\rho(y)\mathrm{str}\rho(\alpha(u))[\alpha(z),[x,v]]
\\&+(-1)^{(|x|+|y|)(|z|+|u|+|v|)}\mathrm{str}\rho(v)\mathrm{str}\rho(\alpha(z))[\alpha(u),[x,y]]
\\&-(-1)^{(|x|+|y|)(|z|+|u|+|v|)+|u||z|}\mathrm{str}\rho(v)\mathrm{str}\rho(\alpha(u))[\alpha(z),[x,y]]
\\=&-(-1)^{|u|(|z|+|y|)}\mathrm{str}\rho(x)\mathrm{str}\rho(\alpha(u))[[y,z],\alpha(v)]
\\&+(-1)^{|v|(|z|+|y|+|u|)}\mathrm{str}\rho(x)\mathrm{str}\rho(\alpha(v))[[y,z],\alpha(u)]
\\&+(-1)^{|u|(|x|+|z|)+|x||y|}\mathrm{str}\rho(y)\mathrm{str}\rho(\alpha(u))[[x,z],\alpha(v)]
\\&-(-1)^{|v|(|z|+|x|+|u|)+|x||y|}\mathrm{str}\rho(y)\mathrm{str}\rho(\alpha(v))[[x,z],\alpha(u)]
\\&+(-1)^{|z|(|x|+|y|)}\mathrm{str}\rho(x)\mathrm{str}\rho(\alpha(z))[[y,u],\alpha(v)]
\\&+(-1)^{|z|(|x|+|y|)+|v|(|z|+|y|+|u|)}\mathrm{str}\rho(x)\mathrm{str}\rho(\alpha(v))[\alpha(z),[y,u]]
\\&-(-1)^{|x||y|+|z|(|x|+|y|)}\mathrm{str}\rho(y)\mathrm{str}\rho(\alpha(z))[[x,u],\alpha(v)]
\\&-(-1)^{|x||y|+|z|(|x|+|y|)+|v|(|z|+|x|+|u|)}\mathrm{str}\rho(y)\mathrm{str}\rho(\alpha(v))[\alpha(z),[x,u]]
\\&+(-1)^{(|x|+|y|)(|z|+|u|)}\mathrm{str}\rho(x)\mathrm{str}\rho(\alpha(z))[\alpha(u),[y,v]]
\\&-(-1)^{(|x|+|y|)(|z|+|u|)+|z||u|}
\mathrm{str}\rho(x)\mathrm{str}\rho(\alpha(u))[\alpha(z),[y,v]]
\\&-(-1)^{(|x|+|y|)(|z|+|u|)+|y||x|}
\mathrm{str}\rho(y)\mathrm{str}\rho(\alpha(z))[\alpha(u),[x,v]]
\\&+(-1)^{(|z|+|u|)(|x|+|y|)+|z||u|+|x||y|}
\mathrm{str}\rho(y)\mathrm{str}\rho(\alpha(u))[\alpha(z),[x,v]]
\end{align*}
By comparison, we have $L=R.$
By Proposition \ref{pro1.26} that if $\alpha$ is an
endomorphism of $(\mathfrak{g}, [\cdot,\cdot],\alpha),$  then $\alpha$ is an endomorphism of $(\mathfrak{g}, [\cdot,\cdot,\cdot]_{\rho},\alpha).$ That is, $(\mathfrak{g}, [\cdot,\cdot,\cdot]_{\rho},\alpha)$ is
multiplicative.
\end{proof}

\begin{proposition}
 Let $(\mathfrak{g}, [\cdot,\cdot])$ be a Lie superalgebra. $\alpha: V\rightarrow V$ be an algebra morphism.
 Then

 $\mathrm{(1)}$ $(\mathfrak{g}, [\cdot,\cdot]_{\alpha},\alpha)$ be a Hom-Lie superalgebra, where $[\cdot,\cdot]_{\alpha}=\alpha\circ [\cdot,\cdot].$

 $\mathrm{(2)}$ suppose that $\rho: \mathfrak{g}\rightarrow \mathrm{gl}(V)$ is a representation of $(\mathfrak{g}, [\cdot,\cdot]_{\alpha},\alpha),$ if $\mathrm{str}\rho\circ \alpha=\mathrm{str}\rho,$ then $[\cdot,\cdot,\cdot]_{\alpha,\rho}=[\cdot,\cdot,\cdot]_{\rho,\alpha}.$
\end{proposition}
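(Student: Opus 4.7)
For part (1), the plan is to recognize this as the standard Yau twist: composing the bracket of an ordinary Lie superalgebra with an algebra endomorphism should produce a Hom-Lie superalgebra. Skew-supersymmetry of $[\cdot,\cdot]_\alpha$ will be immediate from the skew-supersymmetry of $[\cdot,\cdot]$ and the linearity of $\alpha$. For the Hom-Jacobi identity the key is that the morphism property $\alpha([x,y])=[\alpha(x),\alpha(y)]$ lets one rewrite each term as
\begin{align*}
[\alpha(x),[y,z]_\alpha]_\alpha = \alpha\bigl([\alpha(x),\alpha([y,z])]\bigr) = \alpha^{2}\bigl([x,[y,z]]\bigr),
\end{align*}
so the graded cyclic sum defining the Hom-Jacobi identity collapses to $\alpha^{2}$ applied to the ordinary graded Jacobi identity for $(\mathfrak{g},[\cdot,\cdot])$, which vanishes.

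For part (2), I read $[\cdot,\cdot,\cdot]_{\alpha,\rho}$ as the 3-ary bracket obtained by first Yau-twisting to the Hom-Lie superalgebra $(\mathfrak{g},[\cdot,\cdot]_\alpha,\alpha)$ via part~(1) and then applying the $\rho$-induced construction from the paragraph preceding Theorem \ref{th1.24}, while $[\cdot,\cdot,\cdot]_{\rho,\alpha}$ denotes $\alpha\circ[\cdot,\cdot,\cdot]_\rho$, that is, first inducing the 3-ary bracket on the underlying Lie superalgebra and then post-composing with $\alpha$. The plan is then a one-step direct comparison: starting from the defining formula
\begin{align*}
[x_1,x_2,x_3]_\rho &= \mathrm{str}\rho(x_1)[x_2,x_3] - (-1)^{|x_1||x_2|}\mathrm{str}\rho(x_2)[x_1,x_3] \\
&\quad + (-1)^{|x_3|(|x_1|+|x_2|)}\mathrm{str}\rho(x_3)[x_1,x_2],
\end{align*}
replacing each binary bracket by $[\cdot,\cdot]_\alpha=\alpha\circ[\cdot,\cdot]$ and factoring $\alpha$ out of the three scalar-times-vector summands by linearity produces $[x_1,x_2,x_3]_{\alpha,\rho}$ on the left and $\alpha([x_1,x_2,x_3]_\rho)=[x_1,x_2,x_3]_{\rho,\alpha}$ on the right.

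I do not anticipate a real technical obstacle: both parts reduce to linearity of $\alpha$ together with the morphism identity $\alpha\circ[\cdot,\cdot]=[\alpha(\cdot),\alpha(\cdot)]$. The only subtle point is fixing the intended meanings of the two subscripted brackets in~(2); once that is settled the equality is a one-line manipulation. The hypothesis $\mathrm{str}\rho\circ\alpha=\mathrm{str}\rho$ is in fact not required for the pointwise equality itself, but it is precisely the condition appearing in Theorem \ref{th1.27} which ensures that either side genuinely defines a multiplicative $3$-ary-Hom-Lie superalgebra, and that is presumably the author's motivation for including it in the statement.
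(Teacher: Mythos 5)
Your proposal is correct and follows essentially the same route as the paper: for part (2) the paper's proof is exactly your one-step computation, replacing each $[\cdot,\cdot]_{\alpha}$ by $\alpha\circ[\cdot,\cdot]$ and pulling $\alpha$ out of the three scalar multiples by linearity to get $\alpha([x_{1},x_{2},x_{3}]_{\rho})=[x_{1},x_{2},x_{3}]_{\rho,\alpha}$, and your reading of the two subscripted brackets matches the paper's. Part (1) is dismissed as ``clear'' in the paper, and your Yau-twist argument (the cyclic sum collapses to $\alpha^{2}$ applied to the graded Jacobi identity) is a correct justification; your observation that $\mathrm{str}\rho\circ\alpha=\mathrm{str}\rho$ is not actually used in the pointwise identity of (2) is also consistent with the paper's computation.
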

\begin{proof} $\mathrm{(1)}$ It is clear.

 $\mathrm{(2)}$ Let $x_{1},x_{2},x_{3}\in \mathfrak{g},$ we have
\begin{align*}[x_{1},x_{2},x_{3}]_{\alpha,\rho}=&\mathrm{str}\rho(x_{1})[x_{2},x_{3}]_{\alpha}-(-1)^{|x_{1}||x_{2}|}
\mathrm{str}\rho(x_{2})[x_{1},x_{3}]_{\alpha}
\\&+(-1)^{|x_{3}|(|x_{1}|+|x_{2}|)}\mathrm{str}\rho(x_{3})[x_{1},x_{2}]_{\alpha}
\\=&\mathrm{str}\rho(x_{1})\alpha[x_{2},x_{3}]-(-1)^{|x_{1}||x_{2}|}
\mathrm{str}\rho(x_{2})\alpha[x_{1},x_{3}]
\\&+(-1)^{|x_{3}|(|x_{1}|+|x_{2}|)}\mathrm{str}\rho(x_{3})\alpha[x_{1},x_{2}]
\\=&\alpha\{\mathrm{str}\rho(x_{1})[x_{2},x_{3}]-(-1)^{|x_{1}||x_{2}|}
\mathrm{str}\rho(x_{2})[x_{1},x_{3}]
\\&+(-1)^{|x_{3}|(|x_{1}|+|x_{2}|)}\mathrm{str}\rho(x_{3})[x_{1},x_{2}]\}
\\=&\alpha([x_{1},x_{2},x_{3}]_{\rho})=[x_{1},x_{2},x_{3}]_{\rho,\alpha}.
\end{align*}
\end{proof}
\begin{definition}
Suppose that $(\mathfrak{g}, [\cdot,\cdot,\cdot],\alpha_{1},\alpha_{2})$ is a 3-ary-Hom-Lie superalgebra and $B$ a $\mathbb{Z}_{2}$-graded subspace
of $(\mathfrak{g}, [\cdot,\cdot,\cdot],\alpha_{1},\alpha_{2}),$ $B$ is called as a Hom-subalgebra of $(\mathfrak{g}, [\cdot,\cdot,\cdot],\alpha_{1},\alpha_{2}),$
if $\alpha_{i}(B)\subseteq B(i=1,2)$ and $[x_{1},x_{2},x_{3}]\in B, \forall x_{1},x_{2},x_{3}\in B;$
$B$ is called as a Hom-ideal of $(\mathfrak{g}, [\cdot,\cdot,\cdot],$ $\alpha_{1},\alpha_{2}),$
if $\alpha_{i}(B)\subseteq B(i=1,2)$ and $[x_{1},x_{2},x_{3}]\in B, \forall x_{1}\in B ,x_{2},x_{3}\in \mathfrak{g}.$
\end{definition}
Now we give two results about subalgebras and ideals of 3-ary-Hom-Lie superalgebras induced
by Hom-Lie superalgebras.
\begin{proposition} Let $(\mathfrak{g}, [\cdot,\cdot],\alpha_{1})$ be a Hom-Lie superalgebra, $\alpha_{2}: \mathfrak{g}\rightarrow \mathfrak{g}$ be an even linear map. $\rho: \mathfrak{g}\rightarrow \mathrm{gl}(V)$ is a reprensentation of $(\mathfrak{g}, [\cdot,\cdot],\alpha_{1})$ and $(\mathfrak{g}, [\cdot,\cdot,\cdot]_{\rho},\alpha_{1},\alpha_{2})$ is the induced 3-ary-Hom-Lie superalgebra.
Let $B$ be a Hom-subalgebra of $(\mathfrak{g}, [\cdot,\cdot],\alpha_{1}).$ If $\alpha_{2}(B)\subseteq B,$ then $B$ is also a Hom-subalgebra of
$\mathfrak{g}_{\rho}(=(\mathfrak{g}, [\cdot,\cdot,\cdot]_{\rho},\alpha_{1},\alpha_{2})).$
\end{proposition}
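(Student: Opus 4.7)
The plan is to verify the two defining conditions of a Hom-subalgebra for the 3-ary structure directly from the hypotheses, using the explicit formula for $[\cdot,\cdot,\cdot]_\rho$.

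First, I would check the stability under the endomorphisms $\alpha_1$ and $\alpha_2$. The inclusion $\alpha_1(B) \subseteq B$ follows because $B$ is by assumption a Hom-subalgebra of the binary Hom-Lie superalgebra $(\mathfrak{g},[\cdot,\cdot],\alpha_1)$, and $\alpha_2(B) \subseteq B$ is exactly the extra hypothesis of the proposition. So the twisting maps preserve $B$ without any further work.

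Next, I would verify that $B$ is closed under the induced ternary bracket. Take any homogeneous elements $x_1,x_2,x_3 \in B$ and expand
\begin{align*}
[x_1,x_2,x_3]_\rho
= \mathrm{str}\rho(x_1)[x_2,x_3]
&- (-1)^{|x_1||x_2|}\mathrm{str}\rho(x_2)[x_1,x_3] \\
&+ (-1)^{|x_3|(|x_1|+|x_2|)}\mathrm{str}\rho(x_3)[x_1,x_2].
\end{align*}
The coefficients $\mathrm{str}\rho(x_i)$ are scalars, and each bracket $[x_i,x_j]$ lies in $B$ since $B$ is closed under the binary bracket of $\mathfrak{g}$. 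As $B$ is a $\mathbb{Z}_2$-graded subspace, the above linear combination is again in $B$. Extending by linearity over non-homogeneous elements completes the argument.

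There is essentially no obstacle here: the proof is a direct unpacking of the formula for $[\cdot,\cdot,\cdot]_\rho$ combined with the subspace and subalgebra hypotheses. The only point that requires a line of comment is that the supertraces are scalars (not elements of $\mathfrak{g}$), so no additional invariance of $B$ under $\rho$ is needed.
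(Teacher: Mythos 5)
Your proof is correct and follows essentially the same route as the paper's: stability under $\alpha_1$ and $\alpha_2$ from the hypotheses, then closure under $[\cdot,\cdot,\cdot]_\rho$ by expanding the bracket as a scalar linear combination of elements $[x_i,x_j]\in B$. The remark that the supertraces are scalars is a sensible clarification, though the paper leaves it implicit.
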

\begin{proof}  We have that $\alpha_{1}(B)\subseteq B,$ since $B$ is a subalgebra of $(\mathfrak{g}, [\cdot,\cdot],\alpha_{1}).$  Moreover, $\alpha_{2}(B)\subseteq B,$ then $\alpha_{i}(B)\subseteq B(i=1,2).$
Now let $x_{1},x_{2},x_{3}\in B,$ one gets
\begin{align*}[x_{1},x_{2},x_{3}]_{\rho}=&\mathrm{str}\rho(x_{1})[x_{2},x_{3}]-(-1)^{|x_{1}||x_{2}|}
\mathrm{str}\rho(x_{2})[x_{1},x_{3}]
+(-1)^{|x_{3}|(|x_{1}|+|x_{2}|)}\mathrm{str}\rho(x_{3})[x_{1},x_{2}],
\end{align*}
which is a linear combination of elements of $B$ and then belongs to $B$.
\end{proof}
\begin{proposition}Let $(\mathfrak{g}, [\cdot,\cdot],\alpha_{1})$ be a Hom-Lie superalgebra.
 $\rho: \mathfrak{g}\rightarrow \mathrm{gl}(V)$ is a representation of $(\mathfrak{g}, [\cdot,\cdot],\alpha_{1}),$
 $(\mathfrak{g}, [\cdot,\cdot,\cdot],\alpha_{1},\alpha_{2})$ is a the induced 3-ary-Hom-Lie superalgebra.
 Let $J$ be a Hom-ideal of $(\mathfrak{g}, [\cdot,\cdot],\alpha_{1})$ and
 If $\alpha_{2}(J)\subseteq J,$ then $J$ is a Hom-ideal of $\mathfrak{g}_{\rho}$ if and
only if
$[\mathfrak{g},\mathfrak{g}]\subseteq J$ or $J\subseteq \mathrm{ker}(\mathrm{str}\rho).$
\end{proposition}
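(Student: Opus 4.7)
The plan is to analyze the defining bracket
\[
[x_{1},x_{2},x_{3}]_{\rho}=\mathrm{str}\rho(x_{1})[x_{2},x_{3}]-(-1)^{|x_{1}||x_{2}|}\mathrm{str}\rho(x_{2})[x_{1},x_{3}]+(-1)^{|x_{3}|(|x_{1}|+|x_{2}|)}\mathrm{str}\rho(x_{3})[x_{1},x_{2}]
\]
for $x_{1}\in J$ and $x_{2},x_{3}\in\mathfrak{g}$, and observe that since $J$ is already a Hom-ideal of $(\mathfrak{g},[\cdot,\cdot],\alpha_{1})$, both $[x_{1},x_{3}]$ and $[x_{1},x_{2}]$ lie in $J$. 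Hence the last two summands automatically belong to $J$, and the whole bracket lies in $J$ if and only if the first summand $\mathrm{str}\rho(x_{1})[x_{2},x_{3}]$ does.

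First I would prove the ``if'' direction by considering the two cases separately. If $[\mathfrak{g},\mathfrak{g}]\subseteq J$, then $[x_{2},x_{3}]\in J$ as well, so the remaining term lies in $J$; combined with $\alpha_{1}(J)\subseteq J$ and the hypothesis $\alpha_{2}(J)\subseteq J$, this shows $J$ is a Hom-ideal of $\mathfrak{g}_{\rho}$. If instead $J\subseteq \mathrm{ker}(\mathrm{str}\rho)$, then $\mathrm{str}\rho(x_{1})=0$ for every $x_{1}\in J$, so the first summand vanishes, and again $[x_{1},x_{2},x_{3}]_{\rho}\in J$.

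For the ``only if'' direction, I would argue by contrapositive. Assume $[\mathfrak{g},\mathfrak{g}]\not\subseteq J$, so there exist homogeneous $x_{2},x_{3}\in\mathfrak{g}$ with $[x_{2},x_{3}]\notin J$. For any $x_{1}\in J$, the ideal condition in $\mathfrak{g}_{\rho}$ forces $\mathrm{str}\rho(x_{1})[x_{2},x_{3}]\in J$; since $[x_{2},x_{3}]\notin J$, the scalar $\mathrm{str}\rho(x_{1})$ must vanish. As $x_{1}\in J$ was arbitrary, $J\subseteq \mathrm{ker}(\mathrm{str}\rho)$, which is exactly the required alternative.

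The argument is almost entirely bookkeeping, and I do not anticipate any real obstacle: the only subtle point is making sure the cancellation of the two ``automatic'' summands is justified by the Hom-ideal property of $J$ in the binary bracket, so that the ``if and only if'' really reduces to the single remaining term $\mathrm{str}\rho(x_{1})[x_{2},x_{3}]$. The closure conditions $\alpha_{1}(J)\subseteq J$ and $\alpha_{2}(J)\subseteq J$ play no role in the equivalence itself and are simply carried along from the hypotheses.
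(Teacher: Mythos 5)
Your proposal is correct and follows essentially the same route as the paper: isolate the one summand of $[\cdot,\cdot,\cdot]_{\rho}$ not automatically absorbed by the binary Hom-ideal property and reduce the equivalence to $\mathrm{str}\rho(\text{ideal element})\,[x_{2},x_{3}]\in J$. If anything, your contrapositive argument for the ``only if'' direction is slightly more careful than the paper's, which passes from the pointwise disjunction ``$\mathrm{str}\rho(y)=0$ or $[x_{1},x_{2}]\in J$'' to the global one without comment.
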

\begin{proof}
 Let $J$ be an ideal of $\mathfrak{g}$ and $\alpha_{2}(J)\subseteq J,$ we have $\alpha_{i}(J)\subseteq J(i=1,2).$
Consider $y\in J$
and $x_{1},x_{2},x_{3}\in \mathfrak{g},$ one gets
\begin{align*}[x_{1},x_{2},y]_{\rho}=&\mathrm{str}\rho(x_{1})[x_{2},y]-(-1)^{|x_{1}||x_{2}|}
\mathrm{str}\rho(x_{2})[x_{1},y]
+(-1)^{|y|(|x_{1}|+|x_{2}|)}\mathrm{str}\rho(y)[x_{1},x_{2}].
\end{align*}
Since $[x_{2},y]$ and $[x_{1},y]$ belong to $J,$
 then, to obtain $[x_{1},x_{2},y]_{\rho}\in J$ it is
necessary and sufficient to have $\mathrm{str}\rho(y)[x_{1},x_{2}]\in J,$ which is equivalent to $\mathrm{str}\rho(y)=0$ or
$[x_{1},x_{2}]\in J.$
\end{proof}
\section{Solvability and nilpotency of $3$-ary multiplicative Hom-Lie superalgebras induced by Hom-Lie superalgebras}

\subsection{Solvability and nilpotency of $3$-ary multiplicative Hom-Lie superalgebras}
Now, we define the derived series, central descending series and the center of an $3$-ary-Hom-Lie
superalgebra, these generalization are relevant only in the case of multiplicative superalgebras.
\begin{definition}  Let $(\mathfrak{g}, [\cdot,\cdot,\cdot],\alpha)$ be a multiplicative 3-ary-Hom-Lie superalgebra, and $I$ a Hom-ideal of
$\mathfrak{g}.$ We define $D^{r}(I), r\in \mathbb{N},$ the derived series of $I$ by:
$$D^{0}(I)=I\ \mbox{and} \ D^{r+1}(I)=[D^{r}(I),D^{r}(I),D^{r}(I)].$$
\end{definition}

\begin{proposition} The subspaces $D^{r}(I)(r\in \mathbb{N})$ are subalgebras of $(\mathfrak{g}, [\cdot,\cdot,\cdot],\alpha).$
\end{proposition}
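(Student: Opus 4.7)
The plan is to proceed by induction on $r\in\mathbb{N}$. By the definition of a Hom-subalgebra given earlier in the paper, I need to verify two things for each $D^r(I)$: first, that $\alpha(D^r(I))\subseteq D^r(I)$, and second, that $[D^r(I),D^r(I),D^r(I)]\subseteq D^r(I)$.

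For the base case $r=0$, I have $D^0(I)=I$, which is a Hom-ideal and therefore in particular a Hom-subalgebra, so both conditions hold by hypothesis.

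For the inductive step, assume $D^r(I)$ is a Hom-subalgebra. To handle the $\alpha$-invariance of $D^{r+1}(I)=[D^r(I),D^r(I),D^r(I)]$, I would invoke multiplicativity: since $\alpha[x,y,z]=[\alpha(x),\alpha(y),\alpha(z)]$, one obtains
\[
\alpha(D^{r+1}(I))=[\alpha(D^r(I)),\alpha(D^r(I)),\alpha(D^r(I))]\subseteq [D^r(I),D^r(I),D^r(I)]=D^{r+1}(I),
\]
where the inclusion uses the inductive hypothesis $\alpha(D^r(I))\subseteq D^r(I)$. For closure under the trilinear bracket, the key observation is that $D^{r+1}(I)\subseteq D^r(I)$: this follows from the inductive hypothesis that $D^r(I)$ is closed under the bracket. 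Hence
\[
[D^{r+1}(I),D^{r+1}(I),D^{r+1}(I)]\subseteq [D^r(I),D^r(I),D^r(I)]=D^{r+1}(I),
\]
which is exactly what is needed.

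Nothing here is truly an obstacle; the only subtle point is recognizing that bracket-closure at level $r+1$ is obtained not by a direct calculation at level $r+1$, but by first deducing the nesting $D^{r+1}(I)\subseteq D^r(I)$ from the inductive hypothesis and then using monotonicity of the bracket in each slot. The $\alpha$-invariance step is the only place where multiplicativity is actually used, which is consistent with the author's earlier remark that the derived series construction is only meaningful in the multiplicative setting.
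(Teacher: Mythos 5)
Your proposal is correct and follows essentially the same induction as the paper: multiplicativity of $\alpha$ handles $\alpha$-invariance, and bracket-closure of $D^{r+1}(I)$ comes from the nesting $D^{r+1}(I)\subseteq D^{r}(I)$ supplied by the inductive hypothesis. The paper phrases the second step element-wise (writing each $x_i$ as a triple bracket of elements of $D^{r}(I)$) rather than as an explicit inclusion, but the underlying argument is identical.
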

\begin{proof} We proceed by induction over $r\in \mathbb{N},$ the case of $r=0$ is trivial. Now suppose that
$D^{r}(I)$ is a Hom-subalgebra of $\mathfrak{g},$ we prove that $D^{r+1}(I)$ is a Hom-subalgebra of $\mathfrak{g}.$
Let $y\in D^{r+1}(I).$ Then we have
$$\alpha(y)=\alpha([y_{1} ,y_{2},y_{3}])=[\alpha(y_{1}),\alpha(y_{1}),\alpha(y_{1})], y_{1} ,y_{2},y_{3}\in D^{r}(I),$$
which is in $D^{r+1}(I),$ since $\alpha(y_{1}),\alpha(y_{1}),\alpha(y_{1})\in D^{r}(I).$ That is $ \alpha(D^{r+1}(I))\subseteq D^{r+1}(I).$
Let $x_{1} ,x_{2},x_{3}\in D^{r+1}(I),$ then
$$[x_{1} ,x_{2},x_{3}]=[[x_{11} ,x_{12},x_{13}],[x_{21} ,x_{22},x_{23}],[x_{31} ,x_{32},x_{33}]], x_{ij}\in D^{r}(I), i, j=1,2,3.$$
Hence $[x_{1} ,x_{2},x_{3}]\in D^{r+1}(I).$
\end{proof}
\begin{proposition}Let $(\mathfrak{g}, [\cdot,\cdot,\cdot],\alpha)$ be a multiplicative 3-ary-Hom-Lie superalgebra, and $I$ a Hom-ideal of
$\mathfrak{g}.$ If $\alpha$ is surjective, then $D^{r}(I)(r\in \mathbb{N})$ are Hom-ideals of $\mathfrak{g}.$
\end{proposition}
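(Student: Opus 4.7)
The plan is to proceed by induction on $r\in\mathbb{N}$. The base case $r=0$ is just the hypothesis that $I=D^{0}(I)$ is a Hom-ideal. For the inductive step, assume $D^{r}(I)$ is a Hom-ideal. The $\alpha$-invariance $\alpha(D^{r+1}(I))\subseteq D^{r+1}(I)$ has already been established in the previous proposition (where each $D^{r}(I)$ was shown to be a Hom-subalgebra), so only the ideal condition $[D^{r+1}(I),\mathfrak{g},\mathfrak{g}]\subseteq D^{r+1}(I)$ remains.

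By linearity it suffices to treat a generating bracket $[[a,b,c],x,y]$ with $a,b,c\in D^{r}(I)$ and $x,y\in\mathfrak{g}$. This is precisely where the surjectivity of $\alpha$ enters: pick $x',y'\in\mathfrak{g}$ with $x=\alpha(x')$ and $y=\alpha(y')$. Two applications of graded skew-supersymmetry give
\[
[[a,b,c],\alpha(x'),\alpha(y')] \;=\; (-1)^{(|a|+|b|+|c|)(|x'|+|y'|)}\,[\alpha(x'),\alpha(y'),[a,b,c]].
\]
Now apply the Hom-Nambu identity from the definition of a 3-ary Hom-Lie superalgebra with the substitution $(x,y,z,u,v)\mapsto(x',y',a,b,c)$; this rewrites $[\alpha(x'),\alpha(y'),[a,b,c]]$ as a sum of three terms in each of which one of $a,b,c$ is replaced by a bracket of the form $[x',y',\cdot]$, while the remaining two sit under $\alpha$.

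The induction then closes as follows. Because $D^{r}(I)$ is a Hom-ideal by the inductive hypothesis, graded skew-supersymmetry identifies each of $[x',y',a]$, $[x',y',b]$, $[x',y',c]$ up to sign with an element of $[D^{r}(I),\mathfrak{g},\mathfrak{g}]\subseteq D^{r}(I)$; combined with $\alpha(a),\alpha(b),\alpha(c)\in D^{r}(I)$ coming from the Hom-subalgebra property, every one of the three resulting terms is a triple bracket of three elements of $D^{r}(I)$ and therefore lies in $[D^{r}(I),D^{r}(I),D^{r}(I)]=D^{r+1}(I)$. Linearity then yields $[D^{r+1}(I),\mathfrak{g},\mathfrak{g}]\subseteq D^{r+1}(I)$, completing the induction.

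The only obstacle I anticipate is careful bookkeeping of the Koszul signs produced both by skew-supersymmetry and by the Hom-Nambu identity; the structural idea---use surjectivity of $\alpha$ to absorb the $\mathfrak{g}$-factors under $\alpha$, then apply Hom-Nambu so that the \emph{external} operator $[x',y',\cdot]$ acts in turn on each of $a,b,c$---requires no tool beyond what has already appeared in this section.
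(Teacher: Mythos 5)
Your proposal is correct and follows essentially the same route as the paper: induction on $r$, surjectivity of $\alpha$ to write the two $\mathfrak{g}$-factors as $\alpha(x'),\alpha(y')$, the Hom-Nambu identity to distribute $[x',y',\cdot]$ over $a,b,c$, and then the inductive hypothesis together with $\alpha(D^{r}(I))\subseteq D^{r}(I)$ to land each term in $[D^{r}(I),D^{r}(I),D^{r}(I)]=D^{r+1}(I)$. The only cosmetic difference is that you place the $D^{r+1}(I)$-element in the first slot (matching the paper's definition of Hom-ideal) and move it by skew-supersymmetry, whereas the paper works with it in the third slot from the outset.
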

\begin{proof} Proof. We already have that $D^{r}(I)(r\in \mathbb{N})$ are subalgebras, we only need to prove that for
all $x_{1},x_{2}\in \mathfrak{g},$  and $y\in D^{r}(I), [x_{1},x_{2},y]\in D^{r}(I).$

We proceed by induction over $r\in \mathbb{N},$ the case of $r=0$ is trivial. Now suppose that $D^{r}(I)$
is an ideal of $\mathfrak{g},$ we prove that $D^{r+1}(I)$ is an ideal of $\mathfrak{g}.$
Let $x_{1},x_{2}\in \mathfrak{g}$ and $y\in D^{r+1}(I),$ we get
\begin{align*}
&[x_{1},x_{2},y]=[x_{1},x_{2},[y_{1},y_{2},y_{3}]], y_{1},y_{2},y_{3}\in D^{r}(I)
\\=&[\alpha(v_{1}),\alpha(v_{2}),[y_{1},y_{2},y_{3}]] \ \mbox{for some}\ v_{1},v_{2}\in \mathfrak{g}
\\=&[[v_{1},v_{2},y_{1}],\alpha(y_{2}),\alpha(y_{3})]+(-1)^{|y_{1}|(|v_{1}|+|v_{2}|)}[\alpha(y_{1}),[v_{1},v_{2},y_{2}],\alpha(y_{3})]
\\&+(-1)^{(|y_{1}|+|y_{1}|)(|v_{1}|+|v_{2}|)}[\alpha(y_{1}),\alpha(y_{2}),[v_{1},v_{2},y_{3}]].
\end{align*}
Hence $[x_{1} ,x_{2},y]\in D^{r+1}(I),$ since all the $\alpha(y_{i})\in D^{r}(I)$ and all the $[v_{1},v_{2},y_{i}]\in D^{r}(I)$($D^{r}(I)$ is a Hom-ideal),
and all the $[[v_{1},v_{2},y_{1}],\alpha(y_{2}),\alpha(y_{3})], [\alpha(y_{1}),[v_{1},v_{2},y_{2}],\alpha(y_{3})]$ and $[\alpha(y_{1}),$\\$\alpha(y_{2}),[v_{1},v_{2},y_{3}]]$ are in $D^{r+1}(I).$
\end{proof}
\begin{definition}  Let $(\mathfrak{g}, [\cdot,\cdot,\cdot],\alpha)$ be a multiplicative 3-ary-Hom-Lie superalgebra, and $I$ a Hom-ideal of
$\mathfrak{g}.$  We define $C^{r}(I)(r\in \mathbb{N})$ the central descending series of $I$ by
$C^{0}(I)=I$ and $C^{r+1}(I)=[C^{r}(I),I,I].$
\end{definition}

\begin{proposition} Let $(\mathfrak{g}, [\cdot,\cdot,\cdot],\alpha)$ be a multiplicative 3-ary-Hom-Lie superalgebra, and $I$ a Hom-ideal of
$\mathfrak{g}.$ If $\alpha$ is surjective, then $C^{r}(I)(r\in \mathbb{N})$ are Hom-ideals of $\mathfrak{g}.$
\end{proposition}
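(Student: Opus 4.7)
The proof will mirror the structure of the preceding proposition about the derived series, and I will proceed by induction on $r \in \mathbb{N}$, using the surjectivity of $\alpha$ to invoke the Hom-Nambu identity on arbitrary elements of $\mathfrak{g}$. The base case $r=0$ is immediate since $C^{0}(I)=I$ is a Hom-ideal by hypothesis. For the inductive step, assuming $C^{r}(I)$ is a Hom-ideal, I must verify the two defining properties for $C^{r+1}(I)=[C^{r}(I),I,I]$: (i) $\alpha$-invariance, and (ii) $[x_{1},x_{2},z]\in C^{r+1}(I)$ for all $x_{1},x_{2}\in\mathfrak{g}$ and $z\in C^{r+1}(I)$.

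For (i), it suffices to check on a generator $z=[y,a,b]$ with $y\in C^{r}(I)$ and $a,b\in I$. Using multiplicativity,
\[
\alpha([y,a,b])=[\alpha(y),\alpha(a),\alpha(b)],
\]
and by the inductive hypothesis $\alpha(y)\in C^{r}(I)$, while $\alpha(a),\alpha(b)\in I$ since $I$ is itself a Hom-ideal. Hence $\alpha(z)\in[C^{r}(I),I,I]=C^{r+1}(I)$.

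For (ii), I again reduce by trilinearity to a generator $z=[y,a,b]$. Since $\alpha$ is surjective, I pick preimages $v_{1},v_{2}\in\mathfrak{g}$ with $x_{1}=\alpha(v_{1})$ and $x_{2}=\alpha(v_{2})$. The Hom-Nambu identity then gives
\begin{align*}
[x_{1},x_{2},[y,a,b]]=&[[v_{1},v_{2},y],\alpha(a),\alpha(b)]
+(-1)^{|y|(|v_{1}|+|v_{2}|)}[\alpha(y),[v_{1},v_{2},a],\alpha(b)]\\
&+(-1)^{(|y|+|a|)(|v_{1}|+|v_{2}|)}[\alpha(y),\alpha(a),[v_{1},v_{2},b]].
\end{align*}
I now argue that each of the three summands lies in $C^{r+1}(I)$. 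In the first, skew-supersymmetry plus the inductive hypothesis that $C^{r}(I)$ is a Hom-ideal of $\mathfrak{g}$ shows $[v_{1},v_{2},y]\in C^{r}(I)$, while $\alpha(a),\alpha(b)\in I$; so the term lies in $[C^{r}(I),I,I]=C^{r+1}(I)$. In the second, $\alpha(y)\in C^{r}(I)$ and, because $I$ itself is a Hom-ideal, $[v_{1},v_{2},a]\in I$ and $\alpha(b)\in I$. The third is handled symmetrically.

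The only conceptual point is the use of surjectivity to reduce to the Hom-Nambu identity; the remaining steps are bookkeeping with skew-supersymmetry and the inductive hypothesis. I do not anticipate any real obstacle, since the argument is a direct adaptation of the proof already given for the derived series $D^{r}(I)$, with $C^{r+1}(I)$ replacing $D^{r+1}(I)$ and two of the three ternary slots occupied by elements of $I$ instead of $D^{r}(I)$.
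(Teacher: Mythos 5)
Your proof is correct and follows essentially the same route as the paper's: induction on $r$, $\alpha$-invariance via multiplicativity, and surjectivity of $\alpha$ to write $x_{i}=\alpha(v_{i})$ and apply the Hom-Nambu identity, checking each of the three resulting summands lands in $[C^{r}(I),I,I]$. The only cosmetic difference is that you place the $C^{r}(I)$-factor in the first slot of the generator rather than the third, which is immaterial by skew-supersymmetry.
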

\begin{proof} We proceed by induction over $r\in \mathbb{N},$ the case of $r=0$ is trivial. Now suppose that
$C^{r}(I)$ is a Hom-ideal of $\mathfrak{g},$ we prove that $C^{r+1}(I)$ is a Hom-ideal of $\mathfrak{g},$
Let $y\in C^{r+1}(I).$ Then one gets
$$\alpha(y)=\alpha([y_{1},y_{2},\omega])=[\alpha(y_{1}),\alpha(y_{2}),\alpha(\omega)], y_{1},y_{2}\in I,\omega\in C^{r}(I),$$
which is in $C^{r+1}(I),$ since $\alpha(y_{2}),\alpha(\omega)\in I$ and $\alpha(\omega)\in C^{r}(I).$
That is $\alpha(C^{r+1}(I))\subseteq C^{r+1}(I).$
Let $x_{1},x_{2}\in \mathfrak{g}$ and
$y\in C^{r+1}(I).$
\begin{align*}
&[x_{1},x_{2},y]=[x_{1},x_{2},[y_{1},y_{2},\omega]], y_{1},y_{2}\in I,\omega\in C^{r}(I)
\\=&[\alpha(v_{1}),\alpha(v_{2}),[y_{1},y_{2},\omega]] \ \mbox{for some}\ v_{1},v_{2}\in \mathfrak{g}
\\=&[[v_{1},v_{2},y_{1}],\alpha(y_{2}),\alpha(\omega)]+(-1)^{|y_{1}|(|v_{1}|+|v_{2}|)}[\alpha(y_{1}),[v_{1},v_{2},y_{2}],\alpha(\omega)]
\\&+(-1)^{(|y_{1}|+|y_{1}|)(|v_{1}|+|v_{2}|)}[\alpha(y_{1}),\alpha(y_{2}),[v_{1},v_{2},\omega]],
\end{align*}
which is in $C^{r+1}(I),$ since all the $\alpha(y_{i})\in I, \alpha(\omega)\in C^{r}(I)$ and all the $[v_{1},v_{2},y_{i}]\in I$($I$ is a Hom-ideal),
and $[v_{1},v_{2},\omega]\in C^{r}(I)(C^{r}(I)$ is a Hom-ideal). Therefore, all the $[[v_{1},v_{2},y_{1}],\alpha(y_{2}),\alpha(\omega)],[\alpha(y_{1}),[v_{1},v_{2},y_{2}],\alpha(\omega)]$ and $[\alpha(y_{1}),\alpha(y_{2}),[v_{1},v_{2},\omega]]$ are in $C^{r+1}(I).$
\end{proof}
\begin{definition}  Let $(\mathfrak{g}, [\cdot,\cdot,\cdot],\alpha)$ be a multiplicative 3-ary-Hom-Lie superalgebra, and $I$ an a Hom-ideal of
$\mathfrak{g}.$ The Hom-ideal $I$ is said to be solvable if there exists $r\in \mathbb{N}$ such that  $D^{r}(I)=\{0\}.$ It is said to
be nilpotent if there exists $r\in \mathbb{N}$ such that  $C^{r}(I)=\{0\}.$
\end{definition}
\begin{definition} Let $(\mathfrak{g}, [\cdot,\cdot,\cdot],\alpha)$ be a multiplicative 3-ary-Hom-Lie superalgebra. Define the center
of $\mathfrak{g},$ denoted by $Z(\mathfrak{g}),$ as:
$Z(\mathfrak{g})=\{z\in\mathfrak{g}\ | \ [x_{1},x_{2},z]=0,\forall x_{1},x_{2}\in \mathfrak{g}\}.$
\end{definition}
\begin{proposition}
 Let $(\mathfrak{g}, [\cdot,\cdot,\cdot],\alpha)$ be a multiplicative 3-ary-Hom-Lie superalgebra. If $\alpha$ is surjective,
then the center of $\mathfrak{g}$ is a Hom-ideal of $\mathfrak{g}.$
\end{proposition}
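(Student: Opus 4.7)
The plan is to verify the two defining conditions of a Hom-ideal for $Z(\mathfrak{g})$: namely, $\alpha$-stability and closure under the ternary bracket with two arbitrary entries from $\mathfrak{g}$. The second condition is essentially trivial: by the very definition of the center, for any $z \in Z(\mathfrak{g})$ and any $x_1,x_2 \in \mathfrak{g}$ we have $[x_1,x_2,z] = 0$, which lies in $Z(\mathfrak{g})$. So the whole content of the proposition lies in proving $\alpha(Z(\mathfrak{g})) \subseteq Z(\mathfrak{g})$, and this is exactly the step where the surjectivity hypothesis on $\alpha$ enters.

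To handle $\alpha$-stability, I would fix $z \in Z(\mathfrak{g})$ and aim to show that $[x_1,x_2,\alpha(z)] = 0$ for arbitrary $x_1,x_2 \in \mathfrak{g}$. Since $\alpha$ is surjective, I can pick $y_1, y_2 \in \mathfrak{g}$ with $\alpha(y_i) = x_i$. Then using multiplicativity of the 3-ary bracket,
\[
[x_1,x_2,\alpha(z)] = [\alpha(y_1),\alpha(y_2),\alpha(z)] = \alpha([y_1,y_2,z]) = \alpha(0) = 0,
\]
which shows $\alpha(z) \in Z(\mathfrak{g})$, as required.

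Before applying the center condition I would also briefly note that $Z(\mathfrak{g})$ is $\mathbb{Z}_2$-graded: if $z = z_{\bar 0} + z_{\bar 1}$ lies in $Z(\mathfrak{g})$ and $x_1, x_2$ are homogeneous, then $[x_1,x_2,z_{\bar 0}]$ and $[x_1,x_2,z_{\bar 1}]$ have distinct parities, so their sum can vanish only if each vanishes separately; hence the homogeneous components of a central element are central, and my use of the center condition on homogeneous pieces is justified.

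The proof is short and the main observation is simply that the surjectivity hypothesis is essential: without it one could only conclude $[\alpha(y_1),\alpha(y_2),\alpha(z)] = 0$ for images of $\alpha$, which is strictly weaker than centrality of $\alpha(z)$. I do not anticipate any genuine obstacle; the only subtlety worth stating cleanly is the grading remark above.
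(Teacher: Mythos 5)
Your proposal is correct and follows essentially the same route as the paper: surjectivity of $\alpha$ is used to write $x_i=\alpha(u_i)$ and multiplicativity gives $[x_1,x_2,\alpha(z)]=\alpha([u_1,u_2,z])=0$, while closure under the bracket is immediate since $[x_1,x_2,z]=0\in Z(\mathfrak{g})$. The remark on homogeneity of the components of a central element is a small additional care point not spelled out in the paper, but it does not change the argument.
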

\begin{proof}  Let $z\in Z(\mathfrak{g})$ and $x_{1},x_{2}\in \mathfrak{g},$ we put $x_{i}=\alpha(u_{i}),$ for $i=1,2,$ then we
have
$$[x_{1},x_{2}, \alpha(z)]=[\alpha(u_{1}),\alpha(u_{2}),\alpha(z)]=\alpha([u_{1},u_{2},z])=0,$$
that is $\alpha(Z(\mathfrak{g}))\subseteq Z(\mathfrak{g}).$
Let $z\in Z(\mathfrak{g})$ and $x_{1},x_{2},y_{1},y_{2}\in \mathfrak{g},$ we have
$$[x_{1},x_{2},[y_{1},y_{2},z]]=[x_{1},x_{2},0] = 0,$$
which means that $Z(\mathfrak{g})$ is a Hom-ideal of $\mathfrak{g}.$
\end{proof}

\subsection{Solvability and nilpotency of $3$-ary multiplicative Hom-Lie superalgebras induced by Hom-Lie superalgebras}

Now we show the relationships between central descending series, derived series and center
of a 3-ary Hom-Lie superalgebra, and those of the induced 3-ary-Hom-Lie superalgebra.

\begin{theorem} \label{th4.9} Let $(\mathfrak{g}, [\cdot,\cdot],\alpha)$ be a Hom-Lie superalgebra. $\rho: \mathfrak{g}\rightarrow \mathrm{gl}(V)$ is a reprensentation of $(\mathfrak{g}, [\cdot,\cdot],\alpha),$ $\beta: \mathfrak{g}\rightarrow \mathfrak{g}$
a linear map satisfying the conditions of Theorem \ref{th1.24} and $(\mathfrak{g}, [\cdot,\cdot,\cdot]_{\rho},\alpha,\beta)$ the induced
3-ary Hom-Lie superalgebra. Then the induced algebra is solvable, more precisely $D^{2}(\mathfrak{g}_{\rho})=0,$ i.e.
$(D^{1}(\mathfrak{g}_{\rho})=[\mathfrak{g},\mathfrak{g},\mathfrak{g}]_{\rho},[\cdot,\cdot,\cdot]_{\rho})$
is abelian.
\end{theorem}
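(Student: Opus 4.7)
My plan is to show $D^{2}(\mathfrak{g}_{\rho})=0$ in two moves: first verify that the first derived subalgebra $D^{1}(\mathfrak{g}_{\rho})=[\mathfrak{g},\mathfrak{g},\mathfrak{g}]_{\rho}$ sits inside the ordinary commutator ideal $[\mathfrak{g},\mathfrak{g}]$ of the underlying Hom-Lie superalgebra, and then evaluate the induced ternary bracket on triples drawn from $[\mathfrak{g},\mathfrak{g}]$ and show that every summand collapses to zero.

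The first move is essentially by inspection. Since each coefficient $\mathrm{str}\rho(x_{i})$ appearing in the defining formula
\[
[x_{1},x_{2},x_{3}]_{\rho}=\mathrm{str}\rho(x_{1})[x_{2},x_{3}]-(-1)^{|x_{1}||x_{2}|}\mathrm{str}\rho(x_{2})[x_{1},x_{3}]+(-1)^{|x_{3}|(|x_{1}|+|x_{2}|)}\mathrm{str}\rho(x_{3})[x_{1},x_{2}]
\]
is a scalar in the base field, every term on the right-hand side lies in $[\mathfrak{g},\mathfrak{g}]$; hence $[\mathfrak{g},\mathfrak{g},\mathfrak{g}]_{\rho}\subseteq[\mathfrak{g},\mathfrak{g}]$.

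For the second move, fix homogeneous $y_{1},y_{2},y_{3}\in D^{1}(\mathfrak{g}_{\rho})$. By linearity and the first step, it suffices to treat the case $y_{i}=[a_{i},b_{i}]$ with $a_{i},b_{i}\in\mathfrak{g}$ homogeneous. Substituting into the defining formula, every summand of $[y_{1},y_{2},y_{3}]_{\rho}$ carries a coefficient of the form $\mathrm{str}\rho([a_{i},b_{i}])$. The crux is therefore to establish the key identity $\mathrm{str}\rho([a,b])=0$ for all homogeneous $a,b\in\mathfrak{g}$. The natural route is to apply $\mathrm{str}$ to the representation axiom $\rho([a,b])\circ\beta=\rho(\alpha(a))\circ\rho(b)-(-1)^{|a||b|}\rho(\alpha(b))\circ\rho(a)$ and invoke the cyclicity property $\mathrm{str}([\sigma,\tau])=0$ in $\mathrm{End}\,V$, then strip the twisting map $\beta$ using the compatibility conditions (\ref{eq1})--(\ref{eq3}) built into Theorem \ref{th1.24}.

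The main obstacle I foresee lies precisely in that last step: the raw representation axiom only yields $\mathrm{str}(\rho([a,b])\circ\beta)=0$, which is not literally the vanishing of $\mathrm{str}\rho([a,b])$. Bridging the two requires carefully using conditions (\ref{eq2}) and (\ref{eq3}) to exchange $\beta$ for $\alpha$ (or the identity) inside the supertrace and then factor it out as a scalar; this is the only nonroutine calculation in the argument. Once $\mathrm{str}\rho([a,b])=0$ is secured, all three coefficients in the expansion of $[y_{1},y_{2},y_{3}]_{\rho}$ vanish simultaneously, giving $D^{2}(\mathfrak{g}_{\rho})=0$ and thus the asserted abelianness of $(D^{1}(\mathfrak{g}_{\rho}),[\cdot,\cdot,\cdot]_{\rho})$.
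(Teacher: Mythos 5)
Your overall route is the paper's own: expand a triple bracket of elements of $D^{1}(\mathfrak{g}_{\rho})$ via the defining formula and observe that every summand carries a scalar coefficient of the form $\mathrm{str}\rho(\text{bracket})$, which is then declared to vanish. Your preliminary inclusion $D^{1}(\mathfrak{g}_{\rho})\subseteq[\mathfrak{g},\mathfrak{g}]$ and the reduction by trilinearity to $y_{i}=[a_{i},b_{i}]$ are both correct and consistent with what the paper does (cf.\ Remark \ref{re4.13}); the paper works instead with $x_{i}=[x_{i}^{1},x_{i}^{2},x_{i}^{3}]_{\rho}$ and invokes $\mathrm{str}\rho([\cdot,\cdot,\cdot]_{\rho})=0$, which amounts to the same key fact $\mathrm{str}\rho([a,b])=0$ since $[\cdot,\cdot,\cdot]_{\rho}$ is a scalar combination of binary brackets.

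The unresolved point is exactly the one you flag, and the bridge you sketch does not close it. The representation axiom yields $\mathrm{str}\bigl(\rho([a,b])\circ\beta\bigr)=\mathrm{str}\bigl([\rho(\alpha(a)),\rho(b)]\bigr)=0$, where this $\beta$ is the element of $\mathfrak{gl}(V)_{\bar{0}}$ from the definition of a representation --- an operator on $V$ sitting \emph{inside} the supertrace, not a scalar and not the twisting map $\beta:\mathfrak{g}\rightarrow\mathfrak{g}$ appearing in Theorem \ref{th1.24}. Conditions (\ref{eq1})--(\ref{eq3}) are identities involving the latter map applied to elements of $\mathfrak{g}$, so they give you no leverage to ``strip'' the operator $\beta$ out of $\mathrm{str}(\rho([a,b])\circ\beta)$; and $\mathrm{str}(A\circ\beta)=0$ does not imply $\mathrm{str}(A)=0$ even when $\beta$ is invertible (take $A$ the identity and $\beta$ supertraceless). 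So $\mathrm{str}\rho([a,b])=0$ is not a consequence of the stated hypotheses: it holds, for instance, when the structure map on $V$ is the identity, and in general it must be imposed as an extra hypothesis, playing the role of the condition $\tau([x,y])=0$ on the trace form in the untwisted construction. To be fair, the paper's proof has the identical hole --- it simply asserts $\mathrm{str}\rho([\cdot,\cdot,\cdot]_{\rho})=0$ here and reuses $\mathrm{str}\rho[x,y]=0$ in Theorem \ref{t6.5} without derivation. Granting that identity, your argument is complete and correct; without it, the final step remains a genuine gap in both your proposal and the paper.
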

\begin{proof}  Let $x_{1},x_{2},x_{3}\in [\mathfrak{g},\mathfrak{g},\mathfrak{g}]_{\rho}, x_{i}=[x^{1}_{i},x^{2}_{i},x^{3}_{i}]_{\rho},
\forall 1\leq i\leq 3,$ then
\begin{align*}
&[x_{1},x_{2},x_{3}]_{\rho}=[[x^{1}_{1},x^{2}_{1},x^{3}_{1}]_{\rho},  [x^{1}_{2},x^{2}_{2},x^{3}_{2}]_{\rho},[x^{1}_{3},x^{2}_{3},x^{3}_{3}]_{\rho}]_{\rho}
\\=&\mathrm{str}\rho([x^{1}_{1},x^{2}_{1},x^{3}_{1}]_{\rho})[[x^{1}_{2},x^{2}_{2},x^{3}_{2}]_{\rho},[x^{1}_{3},x^{2}_{3},x^{3}_{3}]_{\rho}]
\\&-(-1)^{|x_{1}||x_{2}|}
\mathrm{str}\rho([x^{1}_{2},x^{2}_{2},x^{3}_{2}]_{\rho})[[x^{1}_{1},x^{2}_{1},x^{3}_{1}]_{\rho},[x^{1}_{3},x^{2}_{3},x^{3}_{3}]_{\rho}]
\\&+(-1)^{|x_{3}|(|x_{1}|+|x_{2}|)}\mathrm{str}\rho([x^{1}_{3},x^{2}_{3},x^{3}_{3}]_{\rho}])[[x^{1}_{1},x^{2}_{1},x^{3}_{1}]_{\rho},  [x^{1}_{2},x^{2}_{2},x^{3}_{2}]_{\rho}]
\\=&0,
\end{align*}
since $\mathrm{str}\rho([\cdot,\cdot,\cdot]_{\rho})=0.$
\end{proof}
\begin{proposition} Let $(\mathfrak{g}, [\cdot,\cdot],\alpha)$ be a multiplicative Hom-Lie superalgebra
satisfying $\mathrm{str}\rho\circ \alpha=\mathrm{str}\rho$ and $(\mathfrak{g}, [\cdot,\cdot,\cdot]_{\rho},\alpha)$ the induced
3-ary-Hom-Lie superalgebra. Let $c\in Z(\mathfrak{g}),$ if
$\mathrm{str}\rho(c)=0,$ then $c\in Z(\mathfrak{g}_{\rho}).$ Moreover, if $\mathfrak{g}$ is not abelian, then $\mathrm{str}\rho(c)=0$ if and only if $c\in Z(\mathfrak{g}_{\rho}).$
\end{proposition}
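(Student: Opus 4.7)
The plan is to prove both directions essentially by direct computation, expanding $[x_1,x_2,c]_\rho$ via the defining formula
\[
[x_1,x_2,x_3]_\rho = \mathrm{str}\rho(x_1)[x_2,x_3] - (-1)^{|x_1||x_2|}\mathrm{str}\rho(x_2)[x_1,x_3] + (-1)^{|x_3|(|x_1|+|x_2|)}\mathrm{str}\rho(x_3)[x_1,x_2],
\]
and exploiting the hypothesis $c\in Z(\mathfrak{g})$, which forces $[x_1,c]=[x_2,c]=0$ (up to graded sign), to collapse the expression to a single surviving term.

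For the forward implication, I would fix arbitrary homogeneous $x_1,x_2\in\mathfrak{g}$ and substitute $x_3=c$ in the bracket above. The first two summands vanish because $c$ is central in $\mathfrak{g}$, leaving only $(-1)^{|c|(|x_1|+|x_2|)}\mathrm{str}\rho(c)[x_1,x_2]$; this last term vanishes by the assumption $\mathrm{str}\rho(c)=0$. Hence $[x_1,x_2,c]_\rho=0$ for every $x_1,x_2$, so $c\in Z(\mathfrak{g}_\rho)$.

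For the converse, assume $\mathfrak{g}$ is not abelian and $c\in Z(\mathfrak{g})\cap Z(\mathfrak{g}_\rho)$. The same expansion now yields $0=[x_1,x_2,c]_\rho = (-1)^{|c|(|x_1|+|x_2|)}\mathrm{str}\rho(c)[x_1,x_2]$ for every homogeneous $x_1,x_2$, so $\mathrm{str}\rho(c)\cdot[x_1,x_2]=0$ identically. Since $\mathfrak{g}$ is not abelian, there exist homogeneous $x_1,x_2$ with $[x_1,x_2]\neq 0$, and therefore $\mathrm{str}\rho(c)=0$ (the scalar factor must vanish).

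There is no real obstacle in this proof; everything follows from the definition of the induced trilinear bracket and two clean vanishing arguments. The only point to be a little careful with is the direction of the biconditional: the statement as written says ``if $\mathfrak{g}$ is not abelian, then $\mathrm{str}\rho(c)=0$ iff $c\in Z(\mathfrak{g}_\rho)$,'' which tacitly keeps the standing hypothesis $c\in Z(\mathfrak{g})$ from the previous sentence, so I would make this hypothesis explicit when setting up the ``only if'' direction to ensure that both central terms $[x_i,c]$ are actually available to kill.
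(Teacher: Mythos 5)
Your proposal is correct and follows exactly the paper's own argument: expand $[x_1,x_2,c]_\rho$ by the defining formula, note that the first two terms die because $c\in Z(\mathfrak{g})$, and read off both directions from the surviving term $(-1)^{|c|(|x_1|+|x_2|)}\mathrm{str}\rho(c)[x_1,x_2]$. Your remark about keeping the standing hypothesis $c\in Z(\mathfrak{g})$ explicit in the converse is a sensible clarification of a point the paper leaves implicit.
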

\begin{proof} Let $c\in Z(\mathfrak{g})$ and $x_{1},x_{2}\in \mathfrak{g},$
\begin{align*}&[x_{1},x_{2},c]_{\rho}
\\=&\mathrm{str}\rho(x_{1})[x_{2},c]-(-1)^{|x_{1}||x_{2}|}
\mathrm{str}\rho(x_{2})[x_{1},c]+(-1)^{|c|(|x_{1}|+|x_{2}|)}\mathrm{str}\rho(c)[x_{1}, x_{2}]
\\=&(-1)^{|c|(|x_{1}|+|x_{2}|)}\mathrm{str}\rho(c)[x_{1}, x_{2}].
\end{align*}
If $\mathrm{str}\rho(c)=0,$ then $c\in Z(\mathfrak{g}_{\rho}).$
Conversely, if $c\in Z(\mathfrak{g}_{\rho})$ and $\mathfrak{g}$ is not abelian, then $\mathrm{str}\rho(c)=0.$
\end{proof}
\begin{proposition} Let $(\mathfrak{g}, [\cdot,\cdot],\alpha)$ be a non-abelian multiplicative Hom-Lie superalgebra
satisfying $\mathrm{str}\rho\circ \alpha=\mathrm{str}\rho$ and $(\mathfrak{g}, [\cdot,\cdot,\cdot]_{\rho},\alpha)$ the induced
3-ary-Hom-Lie superalgebra. If
$\mathrm{str}\rho(Z(\mathfrak{g}))\neq \{0\},$ then $\mathfrak{g}_{\rho}$ is not abelian.
\end{proposition}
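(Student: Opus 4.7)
The plan is to show that the 3-ary bracket $[\cdot,\cdot,\cdot]_{\rho}$ is not identically zero by constructing an explicit triple on which it does not vanish. The two hypotheses — $\mathfrak{g}$ is non-abelian, and $\mathrm{str}\rho(Z(\mathfrak{g}))\neq \{0\}$ — each supply one of the two ingredients needed: the first gives homogeneous $x_1, x_2 \in \mathfrak{g}$ with $[x_1, x_2] \neq 0$, and the second gives $c \in Z(\mathfrak{g})$ with $\mathrm{str}\rho(c) \neq 0$. I would choose $c$ homogeneous by noting that $Z(\mathfrak{g})$ is a $\mathbb{Z}_2$-graded subspace and that $\mathrm{str}\rho$ vanishes on odd endomorphisms, so the nonzero value of $\mathrm{str}\rho$ on $Z(\mathfrak{g})$ must be realized on an even central element.

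Next I would plug this triple into the defining formula
\[
[x_{1},x_{2},c]_{\rho} = \mathrm{str}\rho(x_{1})[x_{2},c] - (-1)^{|x_{1}||x_{2}|}\mathrm{str}\rho(x_{2})[x_{1},c] + (-1)^{|c|(|x_{1}|+|x_{2}|)}\mathrm{str}\rho(c)[x_{1}, x_{2}].
\]
Since $c \in Z(\mathfrak{g})$, the first two terms vanish (indeed, this is exactly the computation already performed in the previous proposition), leaving
\[
[x_{1},x_{2},c]_{\rho} = (-1)^{|c|(|x_{1}|+|x_{2}|)}\mathrm{str}\rho(c)[x_{1},x_{2}].
\]
Because $\mathrm{str}\rho(c) \neq 0$ and $[x_1, x_2] \neq 0$, this element is nonzero, so $\mathfrak{g}_{\rho}$ is not abelian.

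There is no serious obstacle here; the argument is essentially a direct contrapositive of the ``only if'' part of the previous proposition applied on a non-centralizing pair. The only mild subtlety is justifying the choice of a \emph{homogeneous} even central element $c$ with nonzero supertrace, which is handled by the grading remark above. The multiplicativity hypothesis and the condition $\mathrm{str}\rho\circ \alpha = \mathrm{str}\rho$ are not actually invoked in the computation — they are present only because they are needed to make sense of the induced structure $\mathfrak{g}_{\rho}$ via Theorem~\ref{th1.27}.
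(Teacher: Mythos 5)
Your proposal is correct and follows essentially the same route as the paper: pick homogeneous $x_1,x_2$ with $[x_1,x_2]\neq 0$ and $c\in Z(\mathfrak{g})$ with $\mathrm{str}\rho(c)\neq 0$, and observe that $[x_1,x_2,c]_{\rho}=(-1)^{|c|(|x_1|+|x_2|)}\mathrm{str}\rho(c)[x_1,x_2]\neq 0$. Your added remark justifying the choice of a homogeneous even $c$ is a small refinement the paper omits.
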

\begin{proof} Let $x_{1},x_{2}\in \mathfrak{g}$ such that $[x_{1},x_{2}]\neq 0$ and $c\in Z(\mathfrak{g})$ such that $\mathrm{str}\rho(c)\neq 0,$ then we
have
\begin{align*}&[x_{1},x_{2},c]_{\rho}
\\=&\mathrm{str}\rho(x_{1})[x_{2},c]-(-1)^{|x_{1}||x_{2}|}
\mathrm{str}\rho(x_{2})[x_{1},c]+(-1)^{|c|(|x_{1}|+|x_{2}|)}\mathrm{str}\rho(c)[x_{1}, x_{2}]
\\=&(-1)^{|c|(|x_{1}|+|x_{2}|)}\mathrm{str}\rho(c)[x_{1}, x_{2}],
\end{align*}
which means that $\mathfrak{g}_{\rho}$ is not abelian.
\end{proof}
\begin{proposition} Let $(\mathfrak{g}, [\cdot,\cdot],\alpha)$ be a multiplicative Hom-Lie superalgebra
satisfying $\mathrm{str}\rho\circ \alpha=\mathrm{str}\rho$ and $(\mathfrak{g}, [\cdot,\cdot,\cdot]_{\rho},\alpha)$ the induced
3-ary-Hom-Lie superalgebra. Let $(C^{p}(\mathfrak{g}))_{p}$ be the central
descending series of $\mathfrak{g},$ and $(C^{p}(\mathfrak{g}_{\rho}))_{p}$ be the central descending series of $\mathfrak{g}_{\rho}.$  Then we have
 $C^{p}(\mathfrak{g}_{\rho})\subset C^{p}(\mathfrak{g}), \forall p\in\mathbb{N}.$
If there exists $u\in \mathfrak{g}$ such that $[u,x_{1},x_{2}]_{\rho}=[x_{1},x_{2}], \forall x_{1},x_{2}\in \mathfrak{g},$ then
$C^{p}(\mathfrak{g}_{\rho})=C^{p}(\mathfrak{g}), \forall p\in\mathbb{N}.$
\end{proposition}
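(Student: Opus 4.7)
The plan is to prove both inclusions by induction on $p$, exploiting two key facts that are already in hand: the explicit three–term formula defining $[\cdot,\cdot,\cdot]_{\rho}$, and the observation (used in Theorem \ref{th4.9}) that $\mathrm{str}\rho$ vanishes on every induced $3$-ary bracket, so $\mathrm{str}\rho\bigl(C^{1}(\mathfrak{g}_{\rho})\bigr)=0$ and hence $\mathrm{str}\rho\bigl(C^{p}(\mathfrak{g}_{\rho})\bigr)=0$ for all $p\geq 1$.

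For the inclusion $C^{p}(\mathfrak{g}_{\rho})\subset C^{p}(\mathfrak{g})$, the case $p=0$ is trivial, and for $p=1$ the definition of $[\cdot,\cdot,\cdot]_{\rho}$ writes every element of $[\mathfrak{g},\mathfrak{g},\mathfrak{g}]_{\rho}$ as a scalar combination of ordinary brackets $[x_{i},x_{j}]$, placing it in $[\mathfrak{g},\mathfrak{g}]=C^{1}(\mathfrak{g})$. For the inductive step $p\geq 1$, I take $z\in C^{p}(\mathfrak{g}_{\rho})$ and $x_{1},x_{2}\in\mathfrak{g}$ and expand
\begin{align*}
[z,x_{1},x_{2}]_{\rho}=\mathrm{str}\rho(z)[x_{1},x_{2}]
-(-1)^{|z||x_{1}|}\mathrm{str}\rho(x_{1})[z,x_{2}]
+(-1)^{|x_{2}|(|z|+|x_{1}|)}\mathrm{str}\rho(x_{2})[z,x_{1}].
\end{align*}
The first term vanishes because $C^{p}(\mathfrak{g}_{\rho})\subset C^{1}(\mathfrak{g}_{\rho})\subset[\mathfrak{g},\mathfrak{g},\mathfrak{g}]_{\rho}$ and $\mathrm{str}\rho$ is zero on such elements. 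For the two remaining terms, the induction hypothesis gives $z\in C^{p}(\mathfrak{g})$, so $[z,x_{i}]\in[C^{p}(\mathfrak{g}),\mathfrak{g}]=C^{p+1}(\mathfrak{g})$; thus $[z,x_{1},x_{2}]_{\rho}\in C^{p+1}(\mathfrak{g})$, which gives $C^{p+1}(\mathfrak{g}_{\rho})\subset C^{p+1}(\mathfrak{g})$.

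For the reverse inclusion under the extra hypothesis that some $u\in\mathfrak{g}$ satisfies $[u,x_{1},x_{2}]_{\rho}=[x_{1},x_{2}]$ for all $x_{1},x_{2}\in\mathfrak{g}$, I again induct on $p$. At stage $p+1$, an arbitrary generator of $C^{p+1}(\mathfrak{g})$ is a bracket $[z,x]$ with $z\in C^{p}(\mathfrak{g})$ and $x\in\mathfrak{g}$. By the hypothesis,
\begin{align*}
[z,x]=[u,z,x]_{\rho}=-(-1)^{|u||z|}[z,u,x]_{\rho},
\end{align*}
using skew-supersymmetry in the first two slots. Since by the inductive hypothesis $z\in C^{p}(\mathfrak{g})\subset C^{p}(\mathfrak{g}_{\rho})$ and $u,x\in\mathfrak{g}_{\rho}$, the element $[z,u,x]_{\rho}$ lies in $[C^{p}(\mathfrak{g}_{\rho}),\mathfrak{g}_{\rho},\mathfrak{g}_{\rho}]_{\rho}=C^{p+1}(\mathfrak{g}_{\rho})$, giving $[z,x]\in C^{p+1}(\mathfrak{g}_{\rho})$.

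The only step with any genuine content is the vanishing of the leading term $\mathrm{str}\rho(z)[x_{1},x_{2}]$ in the inductive step of the first inclusion; the whole argument hinges on shifting $z$ from the level-$p$ bracket chain of $\mathfrak{g}_{\rho}$ into $[\mathfrak{g},\mathfrak{g},\mathfrak{g}]_{\rho}$ and invoking the supertrace identity. Once this is isolated, the remainder is bookkeeping with the defining formula, the sign rules of skew-supersymmetry, and the binary definition $C^{r+1}(\mathfrak{g})=[C^{r}(\mathfrak{g}),\mathfrak{g}]$.
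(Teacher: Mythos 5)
Your proof is correct and follows essentially the same route as the paper: expand the three-term formula for $[\cdot,\cdot,\cdot]_{\rho}$, induct on $p$, and use the element $u$ for the reverse inclusion. The only difference is cosmetic but welcome: you explicitly justify discarding the term $\mathrm{str}\rho(z)[x_{1},x_{2}]$ via $\mathrm{str}\rho\bigl(C^{p}(\mathfrak{g}_{\rho})\bigr)=0$ for $p\geq 1$, a step the paper's proof drops silently.
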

\begin{proof} Theorem \ref{th1.27} provides that $\mathfrak{g}_{\rho}$ is multiplicative. We proceed by induction over $p\in\mathbb{N}.$
The case of $p=0$ is trivial, for $p=1,$
$ \forall x=[x_{1},x_{2},x_{3}]_{\rho}\in C^{1}(\mathfrak{g}_{\rho}),$
we have
\begin{align*}x=&\mathrm{str}\rho(x_{1})[x_{2},x_{3}]-(-1)^{|x_{1}||x_{2}|}
\mathrm{str}\rho(x_{2})[x_{1},x_{3}]
\\&+(-1)^{|x_{3}|(|x_{1}|+|x_{2}|)}\mathrm{str}\rho(x_{3})[x_{1}, x_{2}],
\end{align*}
which is a linear combination of elements of $C^{1}(\mathfrak{g})$ and then is an element of $C^{1}(\mathfrak{g}).$ Suppose
now that there exists  $u\in \mathfrak{g}$ such that $[u,x_{1},x_{2}]_{\rho}=[x_{1},x_{2}], \forall x_{1},x_{2}\in \mathfrak{g}.$ Then for
$x=[x_{1},x_{2}]\in C^{1}(\mathfrak{g}), x=[u,x_{1},x_{2}]_{\rho}$ and hence it is an element of $C^{1}(\mathfrak{g})_{\rho}.$

Now, we suppose this proposition true for some $p\in\mathbb{N},$ and let $x\in C^{p+1}(\mathfrak{g}_{\rho}).$ Then
$x=[a,x_{1},x_{2}]_{\rho}$ with $x_{1},x_{2}\in \mathfrak{g}$ and $a\in C^{p}(\mathfrak{g}_{\rho}),$
\begin{align*}x=&[a,x_{1},x_{2}]_{\rho}=-(-1)^{|x_{1}||a|}
\mathrm{str}\rho(x_{1})[a,x_{3}]+(-1)^{|x_{2}|(|a|+|x_{1}|)}\mathrm{str}\rho(x_{2})[a, x_{1}],
\end{align*}
which is an element of $C^{p+1}(\mathfrak{g})$ because $a\in C^{p}(\mathfrak{g}_{\rho})\subset C^{p}(\mathfrak{g}).$ Assume there exists $u \in \mathfrak{g}$
such that $[u,x_{1},x_{2}]_{\rho}=[x_{1},x_{2}], \forall x_{1},x_{2}\in \mathfrak{g}.$ Then if $x\in C^{p+1}(\mathfrak{g}),$  we have $x=
[a,x_{1}]$  with $a \in C^{p}(\mathfrak{g})$ and $ x_{1}\in \mathfrak{g}.$ Therefore
$x=[a,x_{1}]=[u,a,x_{1}]_{\rho}=(-1)^{|u|(|a|+|x_{1}|)}[a,x_{1},u]_{\rho}\in C^{p+1}(\mathfrak{g}_{\rho}).$
\end{proof}
\begin{remark}\label{re4.13}
 It also results from the preceding proposition that
$$D^{1}(\mathfrak{g}_{\rho})=[\mathfrak{g},\mathfrak{g},\mathfrak{g}]_{\rho}\subset D^{1}(\mathfrak{g})=[\mathfrak{g},\mathfrak{g}],$$
and if there exists $u \in \mathfrak{g}$ such that
$[u,x_{1},x_{2}]_{\rho}=[x_{1},x_{2}], \forall x_{1},x_{2}\in \mathfrak{g}.$
Then $D^{1}(\mathfrak{g}_{\rho})=D^{1}(\mathfrak{g}).$ For the rest of the derived series, we have obviously the first inclusion
by Theorem \ref{th4.9}, which states also that all induced algebras are solvable.
\end{remark}
\begin{theorem}
 Let $(\mathfrak{g}, [\cdot,\cdot],\alpha)$ be a  multiplicative Hom-Lie superalgebra
satisfying $\mathrm{str}\rho\circ \alpha=\mathrm{str}\rho$ and $(\mathfrak{g}, [\cdot,\cdot,\cdot]_{\rho},\alpha)$ the induced
3-ary-Hom-Lie superalgebra. Then, if $\mathfrak{g}$ is nilpotent of class
$p,$ we have $\mathfrak{g}_{\rho}$ is nilpotent of class at most $p.$ Moreover, if there exists $u \in \mathfrak{g}$ such that
$[u,x_{1},x_{2}]_{\rho}=[x_{1},x_{2}], \forall x_{1},x_{2}\in \mathfrak{g},$ then $\mathfrak{g}$ is nilpotent of class $p$ if and only if $\mathfrak{g}_{\rho}$ is
nilpotent of class $p.$
\end{theorem}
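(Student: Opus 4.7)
The plan is to reduce everything to the preceding proposition, which already compares the central descending series of $\mathfrak{g}$ and $\mathfrak{g}_{\rho}$. Recall that proposition established the inclusion $C^{p}(\mathfrak{g}_{\rho}) \subseteq C^{p}(\mathfrak{g})$ for every $p \in \mathbb{N}$, together with the equality $C^{p}(\mathfrak{g}_{\rho}) = C^{p}(\mathfrak{g})$ under the additional hypothesis on $u$. Since nilpotency of class $p$ is defined by the vanishing $C^{p}(\cdot) = \{0\}$ (with $C^{p-1}(\cdot)\neq\{0\}$), both assertions are immediate translations.

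For the first assertion, I would argue as follows: assume $\mathfrak{g}$ is nilpotent of class $p$, so $C^{p}(\mathfrak{g}) = \{0\}$. By the inclusion from the preceding proposition, $C^{p}(\mathfrak{g}_{\rho}) \subseteq C^{p}(\mathfrak{g}) = \{0\}$, so $\mathfrak{g}_{\rho}$ is nilpotent of class at most $p$. (One cannot in general rule out strict inequality of classes, since the inclusion may be strict; this is exactly why the statement only says \emph{at most} $p$.)

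For the second assertion, assume the existence of $u \in \mathfrak{g}$ with $[u,x_{1},x_{2}]_{\rho} = [x_{1},x_{2}]$ for all $x_{1},x_{2} \in \mathfrak{g}$. The preceding proposition then upgrades the inclusion to an equality $C^{p}(\mathfrak{g}_{\rho}) = C^{p}(\mathfrak{g})$ for every $p \in \mathbb{N}$. Consequently $C^{p}(\mathfrak{g}) = \{0\}$ if and only if $C^{p}(\mathfrak{g}_{\rho}) = \{0\}$, and likewise $C^{p-1}(\mathfrak{g}) \neq \{0\}$ if and only if $C^{p-1}(\mathfrak{g}_{\rho}) \neq \{0\}$. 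The two equivalences together give that $\mathfrak{g}$ is nilpotent of class exactly $p$ if and only if $\mathfrak{g}_{\rho}$ is nilpotent of class exactly $p$.

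There is essentially no obstacle here, since the heavy lifting has already been carried out in the proposition on central descending series: the induction on $p$ that produces the inclusion and the equality was the real content. The only subtlety worth noting in the writeup is to spell out why the equality at every level $p$ translates to equality of \emph{class}, not merely of the ``at most'' bound, so that the converse direction in the second assertion is genuinely obtained.
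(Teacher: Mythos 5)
Your proposal is correct and follows essentially the same route as the paper: both arguments reduce immediately to the preceding proposition on the central descending series, using the inclusion $C^{p}(\mathfrak{g}_{\rho})\subseteq C^{p}(\mathfrak{g})$ for the first assertion and the equality $C^{p}(\mathfrak{g}_{\rho})=C^{p}(\mathfrak{g})$ (under the hypothesis on $u$) for the second. Your writeup is in fact slightly more explicit than the paper's about why equality at levels $p$ and $p-1$ yields equality of the nilpotency \emph{class} rather than just the bound.
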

\begin{proof}
 Theorem \ref{th1.27} provides that $\mathfrak{g}_{\rho}$ is multiplicative.

(1) Suppose that $(\mathfrak{g},[\cdot,\cdot])$ is nilpotent of class $p\in\mathbb{N},$ then $C^{P}(\mathfrak{g})=\{0\}.$  By the preceding
proposition, $C^{P}(\mathfrak{g}_{\rho})\subseteq C^{P}(\mathfrak{g})=\{0\},$ therefore $(\mathfrak{g},[\cdot,\cdot,\cdot]_{\rho})$ is nilpotent of class at most
$p.$

(2) We suppose now that $(\mathfrak{g},[\cdot,\cdot,\cdot]_{\rho})$ is nilpotent of class $p\in\mathbb{N},$ and that there exists $u \in \mathfrak{g}$
such that $[u,x_{1},x_{2}]_{\rho}=[x_{1},x_{2}], \forall x_{1},x_{2}\in \mathfrak{g},$ then $C^{p}(\mathfrak{g}_{\rho})=0.$ By the
preceding proposition, $C^{p}(\mathfrak{g})=C^{p}(\mathfrak{g}_{\rho})=\{0\}.$ Therefore $(\mathfrak{g},[\cdot,\cdot])$ is nilpotent, since
$C^{p-1}(\mathfrak{g})=C^{p-1}(\mathfrak{g}_{\rho})\neq\{0\},$ hence $(\mathfrak{g},[\cdot,\cdot,\cdot]_{\rho})$ and $(\mathfrak{g},[\cdot,\cdot])$ have the same nilpotency class.
\end{proof}

\section{Central extensions of $3$-ary multiplicative Hom-Lie superalgebras induced by Hom-Lie superalgebras}
\begin{definition}
Let $(\mathfrak{g}, [\cdot,\cdot],\alpha)$ be a multiplicative Hom-Lie superalgebra. We call central extensions of $\mathfrak{g}$ the space $\bar{\mathfrak{g}}=\mathfrak{g}\oplus Kc$ equipped with the bracket $[\cdot,\cdot]_{c}$ and the morphism $\alpha_{c}$ defined by:
$$[x_{1},x_{2}]_{c}=[x_{1},x_{2}]+\omega(x_{1},x_{2})c \ \ \text{and} \ \ [x_{1},c]=0, \forall x_{1},x_{2}\in \mathfrak{g},$$
$$\bar{\alpha}(\bar{x})=\alpha(x)+(\lambda(\bar{x}))c, \forall \bar{x}=x+x_{c}c\in \bar{\mathfrak{g}}, x\in \mathfrak{g}.$$
where $\lambda:\bar{\mathfrak{g}}\rightarrow K$ is a linear map and $\omega:\mathfrak{g}\times \mathfrak{g}\rightarrow K$
is a skew-supersymmetric bilinear map such that $[\cdot,\cdot]_{c}$ and $\alpha_{c}$ satisfy the Hom-Jacobi identity.
\end{definition}

\begin{definition}\cite{lyc} Let $(\mathfrak{g}, [\cdot,\cdot],\alpha)$ be a multiplicative Hom-Lie superalgebra and $K$ be a field.  The set of $p$-cochains on $\mathfrak{g}$ with coefficients in $K$, which we denote by $C^{p}(\mathfrak{g};K)$, is the set of skew-supersymmetric $p$-linear maps from $\mathfrak{g}\times\cdots \times \mathfrak{g}$ ($p$-times) to $K$:\\
 $$C^{p}(\mathfrak{g};K) \triangleq \{f: \wedge ^{p}\mathfrak{g}\rightarrow \text{ K is a linear map}\}.$$

The set of $p$-Hom-cochains on $\mathfrak{g}$ with coefficients in $K$,  which we
denote by $C^{p}_{\alpha}(\mathfrak{g};K)$ is given by
$$C^{p}_{\alpha}(\mathfrak{g};K)=\{f\in C^{p}(\mathfrak{g};K)|\alpha \circ f=f\circ \alpha\}.$$

Associated to the $\alpha ^{0}$-adjoint representation,  the coboundary operator $d_s:C_{\alpha}^{p}(\mathfrak{g};K)\rightarrow C_{\alpha}^{p+1}(\mathfrak{g};K)$ is given by
\begin{eqnarray*}
d_{s}f(x_{0}, \cdots, x_{k})&=&\sum_{i<j}(-1)^{i+j}(-1)^{(|x_{0}|+\cdots +|x_{i-1}|)|x_{i}|}(-1)^{(|x_{0}|+\cdots +|x_{j-1}|)|x_{j}|}(-1)^{|x_{i}||x_{j}|}\\
&&f([x_{i}, x_{j}], \alpha(x_{0}), \cdots, \widehat{\alpha(x_{i})}, \cdots, \widehat{\alpha(x_{i})}, \cdots, \alpha(x_{k})).
\end{eqnarray*}
\end{definition}
The elements of $Z^{p}(\mathfrak{g};K)=\mathrm{ker}\,d^{p}$ are called $p$-cocycles, those of
$B^{p}(\mathfrak{g};K)=\mathrm{Im}\,d^{p}$ are called coboundaries. The quotient $H^{p}=\frac{Z^{p}}{B^{p}}$
is the $p$-th cohomology group. We sometimes add in subscript the representation used in
the cohomology complex, for example $Z^{p}_{ad}(\mathfrak{g},\mathfrak{g})$ denotes the set of $p$-cocycle for the adjoint
cohomology and $Z^{p}_{0}(\mathfrak{g},K)$ denotes the set of $p$-cocycle for the scalar cohomology.
\begin{proposition} $\mathrm{(1)}$ The bracket of a central extension of a Hom-Lie superalgebra
satisfies the Hom-Jacobi identity if and only if the map $\omega$ is a $2$-cocycle for the scalar
cohomology of Hom-Lie superalgebras.

$\mathrm{(2)}$ Two central extensions defined by two maps $\omega_{1}$ and $\omega_{2}$ are isomorphic if and only if
$\omega_{2}-\omega_{1}$ is a $2$-coboundary for the scalar cohomology of Hom-Lie superalgebras.
\end{proposition}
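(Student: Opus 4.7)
The plan for part (1) is to expand the Hom-Jacobi identity of the extended bracket directly. Writing a generic element of $\bar{\mathfrak{g}}=\mathfrak{g}\oplus Kc$ and using the rules $[x_1,x_2]_c=[x_1,x_2]+\omega(x_1,x_2)c$, $[\cdot,c]_c=0$ and $\bar{\alpha}(x+tc)=\alpha(x)+\lambda(x+tc)c$, every term $[\bar{\alpha}(x),[y,z]_c]_c$ splits as $[\alpha(x),[y,z]]+\omega(\alpha(x),[y,z])c$, because the $Kc$-contributions are killed by $[\cdot,c]_c=0$. Summing the three cyclic terms with their Koszul signs, the $\mathfrak{g}$-component is precisely the Hom-Jacobi identity of $(\mathfrak{g},[\cdot,\cdot],\alpha)$, which holds by hypothesis, while the $Kc$-component reduces to
\[
(-1)^{|x||z|}\omega(\alpha(x),[y,z])+(-1)^{|y||x|}\omega(\alpha(y),[z,x])+(-1)^{|z||y|}\omega(\alpha(z),[x,y])=0.
\]
Comparing with the explicit expression of $d_s$ at $p=2$ applied to the trivial representation, this is exactly $d_s\omega(x,y,z)=0$. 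Hence the Hom-Jacobi identity for $[\cdot,\cdot]_c$ is equivalent to $\omega\in Z^2_0(\mathfrak{g};K)$.

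For part (2), the plan is to characterize a Hom-Lie superalgebra isomorphism $\phi:(\bar{\mathfrak{g}},[\cdot,\cdot]_{c_1},\bar{\alpha}_1)\to(\bar{\mathfrak{g}},[\cdot,\cdot]_{c_2},\bar{\alpha}_2)$ that restricts to the identity on $Kc$ and descends to the identity on the quotient $\bar{\mathfrak{g}}/Kc\cong\mathfrak{g}$. Such a $\phi$ must take the form $\phi(x+tc)=x+(t+f(x))c$ for some even linear map $f:\mathfrak{g}\to K$; the condition $\phi\circ\bar{\alpha}_1=\bar{\alpha}_2\circ\phi$ together with the hypothesis $\bar{\alpha}_i|_\mathfrak{g}=\alpha$ forces $f\circ\alpha=\alpha\circ f$, so $f\in C^1_\alpha(\mathfrak{g};K)$. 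The multiplicativity condition $\phi([x,y]_{c_1})=[\phi(x),\phi(y)]_{c_2}$, after cancelling the common $\mathfrak{g}$-part, reduces on the $c$-component to $f([x,y])+\omega_1(x,y)=\omega_2(x,y)$, i.e.\ $\omega_2-\omega_1=f\circ[\cdot,\cdot]$, which up to the sign convention in the definition of $d_s$ is exactly $d_s f$. Conversely, given $f$ with $\omega_2-\omega_1=d_s f$, the formula above manifestly defines an isomorphism of central extensions.

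The main obstacle will be bookkeeping of graded Koszul signs: matching the signs produced by expanding $[\bar{\alpha}(\cdot),[\cdot,\cdot]_c]_c$ with its cyclic permutations to the signs appearing in the formula for $d_s$ requires a careful case-by-case check on the parities of $x,y,z$. A secondary technical point is the role of $\lambda$: one must verify that the $\lambda$-contribution in $\bar{\alpha}$ does not obstruct the cocycle/coboundary identification, which follows from the fact that $[\lambda(\cdot)c,\cdot]_c=0$ so that $\lambda$ enters neither the Hom-Jacobi identity in nontrivial ways nor the comparison between $\omega_1$ and $\omega_2$ beyond fixing the admissible form of $f$ as an $\alpha$-compatible $1$-cochain.
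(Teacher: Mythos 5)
Your proposal is correct and follows essentially the same route as the paper: for (1) a direct expansion of the cyclic Hom-Jacobi sum for $[\cdot,\cdot]_{c}$, whose $\mathfrak{g}$-component vanishes by hypothesis and whose $Kc$-component is identified with $d_{s}\omega$, and for (2) the explicit isomorphism $x\mapsto x+f(x)c$ with $\omega_{2}-\omega_{1}=f\circ[\cdot,\cdot]$. The only difference is that you also sketch the converse direction of (2) (an extension isomorphism forces $\omega_{2}-\omega_{1}$ to be a coboundary), which the paper's proof leaves implicit.
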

\begin{proof}
(1) Let $(\mathfrak{g}, [\cdot,\cdot],\alpha)$ be a multiplicative Hom-Lie superalgebra. Let $\omega$ be
a skew-supersymmetric bilinear form on $\bar{\mathfrak{g}}=\mathfrak{g}\oplus Kc$ the bracket $[\cdot,\cdot]_{c}$ by:
$$[x_{1},x_{2}]_{c}=[x_{1},x_{2}]+\omega(x_{1},x_{2})c \ \ \text{and} \ \ [x_{1},c]=0, \forall x_{1},x_{2}\in \mathfrak{g}.$$
Then we have
\begin{align*}
&(-1)^{|z||x|}[\alpha(x),[y,z]_{c}]_{c}+(-1)^{|z||y|}[\alpha(z),[x,y]_{c}]_{c}+(-1)^{|x||y|}[\alpha(y),[z,x]_{c}]_{c}
\\=&(-1)^{|z||x|}([\alpha(x),[y,z]]+\omega(\alpha(x),[y,z])c)+(-1)^{|z||y|}([\alpha(z),[x,y]]+\omega(\alpha(z),[x,y])c)
\\&+(-1)^{|x||y|}([\alpha(y),[z,x]]+\omega(\alpha(y),[z,x])c)
\\=&(-1)^{|z||x|}[\alpha(x),[y,z]]+(-1)^{|z||y|}[\alpha(z),[x,y]]
+(-1)^{|x||y|}[\alpha(y),[z,x]]
\\&+(-1)^{|z||x|}\omega(\alpha(x),[y,z])c+(-1)^{|z||y|}\omega(\alpha(z),[x,y])c
+(-1)^{|x||y|}\omega(\alpha(y),[z,x])c
\\=&(-1)^{|z||x|}\omega(\alpha(x),[y,z])c+(-1)^{|z||y|}\omega(\alpha(z),[x,y])c
+(-1)^{|x||y|}\omega(\alpha(y),[z,x])c
\\=&(-1)^{|x||z|}d^{2}\omega(x,y,z)c.
\end{align*}
That is, Hom-Jacobi identity is satisfied if and only if the map $\omega$ is a $2$-cocycle for the scalar
cohomology of $\mathfrak{g}.$

(2) Let $\omega_{1},\omega_{2}\in Z^{p}_{0}(\mathfrak{g},K)$ such that $\omega_{2}-\omega_{1}=\alpha([\cdot,\cdot])$ with $\alpha\in C^{1}_{\alpha}(\mathfrak{g},K).$ Let $(\bar{\mathfrak{g}},[\cdot,\cdot]_{\omega_{1}})$ and $(\bar{\mathfrak{g}},[\cdot,\cdot]_{\omega_{2}})$
 be two central extensions of $\mathfrak{g}$ defined by $\omega_{1}$ and $\omega_{2}$ respectively. Consider
 $$f: (\bar{\mathfrak{g}},[\cdot,\cdot]_{\omega_{1}})\rightarrow (\bar{\mathfrak{g}},[\cdot,\cdot]_{\omega_{2}})$$
defined by $f(x)=x+\alpha(p_{A}(x))c,$ where $p_{A}$ is the projection of range $\mathfrak{g}.$ We have
\begin{align*}
&f([x_{1},x_{2}]_{\omega_{1}})=[x_{1},x_{2}]_{\omega_{1}}+\alpha[x_{1},x_{2}]c
\\=&[x_{1},x_{2}]+\omega_{1}(x_{1},x_{2})c+\alpha[x_{1},x_{2}]c
\\=&[x_{1},x_{2}]+\omega_{2}(x_{1},x_{2})c=[x_{1},x_{2}]_{\omega_{2}}
\\=&[x_{1}+\alpha(p_{A}(x_{1}))c,x_{2}+\alpha(p_{A}(x_{2}))c]_{\omega_{2}}
\\=&[f(x_{1}),f(x_{2})]_{\omega_{2}}.
\end{align*}
That means $f$ is a Hom-Lie superalgebras homomorphism. Let us prove now that is an  isomorphism:
\begin{align*}
&\mathrm{ker}(f)=\{x\in \bar{\mathfrak{g}}|  f(x)=0\}
\\=&\{x\in \bar{\mathfrak{g}}\ |\ x+\alpha(p_{A}(x))c=0\}
\\=&\{x\in \bar{\mathfrak{g}}\ |\ p_{A}(x)+(x_{c}+\alpha(p_{A}(x)))c=0\}(x=p_{A}(x)+x_{c}c)
\\=&\{x\in \bar{\mathfrak{g}}\ |\ p_{A}(x)=0\ \ \mbox{and}\ \ x_{c}+\alpha(p_{A}(x))=0\}=\{0\},
\end{align*}
which means that $f$ is injective. Therefore one concludes, when $\mathfrak{g}$ is finite dimensional, that it is  bijective. We prove now that $f$
is surjective,  so the result holds in infinite dimensional case:
\begin{align*}
&\mathrm{Im}(f)=\{f(x) \ | \  x\in \bar{\mathfrak{g}}\}
\\=&\{ x+\alpha(p_{A}(x))c \ |\ x\in \bar{\mathfrak{g}}\}
\\=&\{p_{A}(x)+(x_{c}+\alpha(p_{A}(x)))c \ | \ x=p_{A}(x)+x_{c}c\in \bar{\mathfrak{g}}\}=\bar{\mathfrak{g}},
\end{align*}
which means that $f$ is an  isomorphism of Hom-Lie superalgebras.
\end{proof}

Now we show the relationship between the central extensions of a Hom-Lie superalgebra
and those of the induced 3-ary-Hom-Lie superalgebra (by some supertrace str):
\begin{theorem} Let $(\mathfrak{g}, [\cdot,\cdot],\alpha)$ be a multiplicative Hom-Lie superalgebra satisfying $\mathrm{str}\rho\circ\alpha=\mathrm{str}\rho,$ and $(\mathfrak{g}, [\cdot,\cdot,\cdot]_{\rho},\alpha)$ be the induced (multiplicative) 3-ary-Hom-Lie superalgebra. Let $(\bar{\mathfrak{g}}, [\cdot,\cdot]_{c}, \alpha_{c})$ be a central extension of $(\mathfrak{g}, [\cdot,\cdot],\alpha),$
where $$\bar{\mathfrak{g}}=\mathfrak{g}\oplus Kc,\, [x_{1},x_{2}]_{c}=[x_{1},x_{2}]+\omega(x_{1},x_{2})c,
\bar{\alpha}(\bar{x})=\alpha(x)+(\lambda(\bar{x}))c,$$
with $\lambda:\bar{\mathfrak{g}}\rightarrow K$, and assume that $\mathrm{str}\rho$ extends to $\bar{\mathfrak{g}}$ by $\mathrm{str}\rho(c)=0.$
Then 3-ary-Hom-Lie superalgebra $(\bar{\mathfrak{g}}, [\cdot,\cdot,\cdot]_{c,\rho}, \alpha_{c})$ induced by $(\bar{\mathfrak{g}}, [\cdot,\cdot]_{c}, \alpha_{c})$ is a central extension of $(\mathfrak{g}, [\cdot,\cdot,\cdot]_{\rho},\alpha)$,
where
$$[x_{1},x_{2},x_{3}]_{c,\rho}=[x_{1},x_{2},x_{3}]_{\rho}+\omega_{\rho}(x_{1},x_{2},x_{3})c$$
with \begin{align*}
\omega_{\rho}(x_{1},x_{2},x_{3})&=\mathrm{str}\rho(x_{1})\omega(x_{2},x_{3})-(-1)^{|x_{1}||x_{2}|}
\mathrm{str}\rho(x_{2})\omega(x_{1},x_{3})
\\&+(-1)^{|x_{3}|(|x_{1}|+|x_{2}|)}\mathrm{str}\rho(x_{3})\omega(x_{1},x_{2}).
\end{align*}
\end{theorem}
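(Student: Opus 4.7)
The plan is to verify this in four small steps, leveraging Theorem \ref{th1.27} as a black box so that no $3$-ary Hom-Nambu identity needs to be rechecked by hand. The key observation is that the central extension $(\bar{\mathfrak{g}}, [\cdot,\cdot]_{c}, \alpha_{c})$ is already a multiplicative Hom-Lie superalgebra, and $\mathrm{str}\rho$ extends to a supertrace-like map on $\bar{\mathfrak{g}}$ via $\mathrm{str}\rho(c)=0$. The $3$-ary bracket $[\cdot,\cdot,\cdot]_{c,\rho}$ is then just the application of the induction construction to $(\bar{\mathfrak{g}}, [\cdot,\cdot]_{c}, \alpha_{c})$ together with this extended $\mathrm{str}\rho$.

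First I would check that the hypotheses of Theorem \ref{th1.27} hold for $\bar{\mathfrak{g}}$. For $\bar{x}=x+x_{c}c\in\bar{\mathfrak{g}}$ one has $\alpha_{c}(\bar{x})=\alpha(x)+\lambda(\bar{x})c$, so $\mathrm{str}\rho(\alpha_{c}(\bar{x}))=\mathrm{str}\rho(\alpha(x))+\lambda(\bar{x})\,\mathrm{str}\rho(c)=\mathrm{str}\rho(x)=\mathrm{str}\rho(\bar{x})$, where the middle equality uses the hypothesis $\mathrm{str}\rho\circ\alpha=\mathrm{str}\rho$ on $\mathfrak{g}$ together with $\mathrm{str}\rho(c)=0$. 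Applying Theorem \ref{th1.27} then gives that $(\bar{\mathfrak{g}}, [\cdot,\cdot,\cdot]_{c,\rho}, \alpha_{c})$ is a multiplicative $3$-ary-Hom-Lie superalgebra, so in particular the Hom-Nambu identity is free.

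Second, I would unfold the definition of the induced $3$-bracket on generators $x_{1},x_{2},x_{3}\in\mathfrak{g}\subseteq\bar{\mathfrak{g}}$ by substituting $[x_{i},x_{j}]_{c}=[x_{i},x_{j}]+\omega(x_{i},x_{j})c$ into the formula defining $[\cdot,\cdot,\cdot]_{c,\rho}$. The terms involving the brackets on $\mathfrak{g}$ assemble into $[x_{1},x_{2},x_{3}]_{\rho}$, while the terms carrying $\omega$ assemble into $\omega_{\rho}(x_{1},x_{2},x_{3})\,c$ with exactly the displayed expression; this is pure bookkeeping and is where most of the ink will go.

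Third, I would verify that $Kc$ remains central, which shows the construction really is a central extension projecting onto $(\mathfrak{g},[\cdot,\cdot,\cdot]_{\rho},\alpha)$: for any $\bar{x}_{1},\bar{x}_{2}\in\bar{\mathfrak{g}}$, in computing $[\bar{x}_{1},\bar{x}_{2},c]_{c,\rho}$ the first two summands vanish because $[\bar{x},c]_{c}=0$, and the last summand vanishes because $\mathrm{str}\rho(c)=0$. Combined with step two, this identifies the quotient by $Kc$ with $(\mathfrak{g}, [\cdot,\cdot,\cdot]_{\rho},\alpha)$ and exhibits $\omega_{\rho}$ as the corresponding $3$-cocycle; the cocycle property of $\omega_{\rho}$ is automatic since $(\bar{\mathfrak{g}}, [\cdot,\cdot,\cdot]_{c,\rho}, \alpha_{c})$ already satisfies the Hom-Nambu identity by Theorem \ref{th1.27}. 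The only mild obstacle is purely notational --- keeping the Koszul signs consistent while expanding $[\cdot,\cdot,\cdot]_{c,\rho}$ in step two --- and this is handled by copying the sign pattern already derived in Theorem \ref{th1.24}.
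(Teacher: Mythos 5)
Your proposal is correct and follows essentially the same route as the paper: expand $[\cdot,\cdot,\cdot]_{c,\rho}$ on elements of $\mathfrak{g}$ to extract $\omega_{\rho}$, invoke the induction construction (Theorem \ref{th1.27}) for the Hom-Nambu identity, and check centrality of $c$ via $[\bar{x},c]_{c}=0$ and $\mathrm{str}\rho(c)=0$. Your explicit verification that $\mathrm{str}\rho\circ\alpha_{c}=\mathrm{str}\rho$ on $\bar{\mathfrak{g}}$ is a small point the paper leaves implicit, but it does not change the argument.
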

\begin{proof} We consider the algebra
$(\bar{\mathfrak{g}}, [\cdot,\cdot,\cdot]_{c,\rho}, \alpha_{c})$ induced by $(\bar{\mathfrak{g}}, [\cdot,\cdot]_{c}, \alpha_{c}).$
Let $x_{1},x_{2},x_{3}$\\$\in \mathfrak{g},$ then one gets
\begin{align*}
[x_{1},x_{2},x_{3}]_{c,\rho}&=\mathrm{str}\rho(x_{1})[x_{2},x_{3}]_{c}-(-1)^{|x_{1}||x_{2}|}
\mathrm{str}\rho(x_{2})[x_{1},x_{3}]_{c}
\\&+(-1)^{|x_{3}|(|x_{1}|+|x_{2}|)}\mathrm{str}\rho(x_{3})[x_{1},x_{2}]_{c}
\\=&\mathrm{str}\rho(x_{1})([x_{2},x_{3}]+\omega(x_{2},x_{3})c)-(-1)^{|x_{1}||x_{2}|}
\mathrm{str}\rho(x_{2})([x_{1},x_{3}]+\omega(x_{1},x_{3})c)
\\&+(-1)^{|x_{3}|(|x_{1}|+|x_{2}|)}\mathrm{str}\rho(x_{3})([x_{1},x_{2}]+\omega(x_{1},x_{2})c)
\\=&\mathrm{str}\rho(x_{1})[x_{2},x_{3}]-(-1)^{|x_{1}||x_{2}|}
\mathrm{str}\rho(x_{2})[x_{1},x_{3}]
+(-1)^{|x_{3}|(|x_{1}|+|x_{2}|)}\mathrm{str}\rho(x_{3})[x_{1},x_{2}]
\\&+\mathrm{str}\rho(x_{1})\omega(x_{2},x_{3})c-(-1)^{|x_{1}||x_{2}|}
\mathrm{str}\rho(x_{2})\omega(x_{1},x_{3})c
\\&+(-1)^{|x_{3}|(|x_{1}|+|x_{2}|)}\mathrm{str}\rho(x_{3})\omega(x_{1},x_{2})c
\\=&[x_{1},x_{2},x_{3}]_{\rho}+\omega_{\rho}(x_{1},x_{2},x_{3})c.
\end{align*}
The map  \begin{align*}
\omega_{\rho}(x_{1},x_{2},x_{3})&=\mathrm{str}\rho(x_{1})\omega(x_{2},x_{3})-(-1)^{|x_{1}||x_{2}|}
\mathrm{str}\rho(x_{2})\omega(x_{1},x_{3})
\\&+(-1)^{|x_{3}|(|x_{1}|+|x_{2}|)}\mathrm{str}\rho(x_{3})\omega(x_{1},x_{2})
\end{align*} is a skew-supersymmetric
trilinear form, and $[\cdot,\cdot,\cdot]_{c,\rho}$ satisfies the Hom-Nambu identity. We also have
$$
[x_{1},x_{2},c]_{c,\rho}=\mathrm{str}\rho(x_{1})[x_{2},c]_{c}-(-1)^{|x_{1}||x_{2}|}
\mathrm{str}\rho(x_{2})[x_{1},c]_{c}
+(-1)^{|c|(|x_{1}|+|x_{2}|)}\mathrm{str}\rho(c)[x_{1},x_{2}]_{c}
=0,
$$
since $[x_{1},c]_{c}=[x_{2},c]_{c}=0$ and $\mathrm{str}\rho(c)=0.$
Therefore $(\bar{\mathfrak{g}}, [\cdot,\cdot,\cdot]_{c,\rho}, \alpha_{c})$ is a central extension of $(\mathfrak{g}, [\cdot,\cdot,\cdot]_{\rho},\alpha).$
\end{proof}
\section{Cohomology for $3$-ary multiplicative Hom-Lie superalgebras induced by Hom-Lie superalgebras}

\begin{definition} \cite{gbl} \label{def:delta}For $m\geq 1,$ we call $m$-coboundary operator of the $n$-ary multiplicative Hom-Nambu-Lie superalgebra $(\mathfrak{g}, [\cdot,\cdots,\cdot],\alpha)$
the even linear map $\delta^{m}: C^m(\mathfrak{g}, V)\rightarrow C^{m+1}(\mathfrak{g}, V)$ by
\begin{align*}
&(\delta^{m} f)(\mathscr{X}_1,\cdots,\mathscr{X}_m, \mathscr{X}_{m+1}, z)\\
=&\sum_{i<j}(-1)^i(-1)^{|\mathscr{X}_i|(|\mathscr{X}_{i+1}|+\cdots+|\mathscr{X}_{j-1}|)}f(\alpha(\mathscr{X}_1),\cdots,\widehat{\alpha(\mathscr{X}_i)},\cdots,[\mathscr{X}_{i},\mathscr{X}_{j}]_{\alpha},\cdots,\alpha(\mathscr{X}_{m+1}),\alpha(z))\\
&+\sum_{i=1}^{m+1}(-1)^i(-1)^{|\mathscr{X}_i|(|\mathscr{X}_{i+1}|+\cdots+|\mathscr{X}_{m+1}|)}f(\alpha(\mathscr{X}_1),\cdots,\widehat{\alpha(\mathscr{X}_i)},\cdots,\alpha(\mathscr{X}_{m+1}),\mathscr{X}_i\cdot z)\\
& +\sum_{i=1}^{m+1}(-1)^{i+1}(-1)^{|\mathscr{X}_i|(|f|+|\mathscr{X}_{1}|+\cdots+|\mathscr{X}_{i-1}|)}\alpha^{m}(\mathscr{X}_i)\cdot f(\mathscr{X}_1,\cdots,\widehat{\mathscr{X}_i},\cdots,\mathscr{X}_{m+1}, z)\\
&  +(-1)^m(f(\mathscr{X}_1,\cdots,\mathscr{X}_m, ~~)\cdot \mathscr{X}_{m+1})\bullet_{\alpha} \alpha^{m}(z),
\end{align*}
where $\mathscr{X}_i=\mathscr{X}_i^1\wedge\cdots\wedge\mathscr{X}_i^{n-1}\in\mathfrak{g}^{\wedge^{n-1}}, i=1,\cdots,m+1, z\in\mathfrak{g}$ and the last term is defined by
\begin{equation}\begin{split}
(f(\mathscr{X}_1,\cdots,\mathscr{X}_m,~~ )\cdot \mathscr{X}_{m+1})\bullet_{\alpha} \alpha^{m}(z)=&\sum_{i=1}^{n-1}(-1)^{(|f|+|\mathscr{X}_{1}|+\cdots+|\mathscr{X}_{m}|)(|\mathscr{X}_{m+1}^1|+\cdots+|\mathscr{X}_{m+1}^{i-1}|)}\\
\cdot[\alpha^{m}(\mathscr{X}_{m+1}^1),\cdots,&f(\mathscr{X}_1,\cdots,\mathscr{X}_m,\mathscr{X}_{m+1}^i),\cdots,\alpha^{m}(\mathscr{X}_{m+1}^{n-1}),\alpha^{m}(z)].
\end{split}\end{equation}
\end{definition}
This cohomology complex is called adjoint  cohomology complex.
In parcular, by definition \ref{d6.1}, one gets the following definition:
\begin{definition}\label{d6.1} For $m\geq 1,$ we call $m$-coboundary operator of the $n$-ary multiplicative Hom-Nambu-Lie superalgebra $(\mathfrak{g}, [\cdot,\cdots,\cdot],\alpha)$
the even linear map $\delta^{m}: C^m(\mathfrak{g}, K)\rightarrow C^{m+1}(\mathfrak{g}, K)$ by
\begin{align*}
&(\delta^{m} f)(\mathscr{X}_1,\cdots,\mathscr{X}_m, \mathscr{X}_{m+1}, z)\\
=&\sum_{i<j}(-1)^i(-1)^{|\mathscr{X}_i|(|\mathscr{X}_{i+1}|+\cdots+|\mathscr{X}_{j-1}|)}f(\alpha(\mathscr{X}_1),\cdots,\widehat{\alpha(\mathscr{X}_i)},\cdots,[\mathscr{X}_{i},\mathscr{X}_{j}]_{\alpha},\cdots,\alpha(\mathscr{X}_{m+1}),\alpha(z))\\
&+\sum_{i=1}^{m+1}(-1)^i(-1)^{|\mathscr{X}_i|(|\mathscr{X}_{i+1}|+\cdots+|\mathscr{X}_{m+1}|)}f(\alpha(\mathscr{X}_1),\cdots,\widehat{\alpha(\mathscr{X}_i)},\cdots,\alpha(\mathscr{X}_{m+1}),\mathscr{X}_i\cdot z),
\end{align*}
where $\mathscr{X}_i=\mathscr{X}_i^1\wedge\cdots\wedge\mathscr{X}_i^{n-1}\in\mathfrak{g}^{\wedge^{n-1}}, i=1,\cdots,m+1, z\in\mathfrak{g}$.
\end{definition}
This cohomology complex is called scalar  cohomology complex.

\begin{definition} \label{d6.2} \cite{gbl}
Let $(\mathfrak{g}, [\cdot,\cdot,\cdot],\alpha)$ be a 3-ary multiplicative Hom-Lie superalgebra. $\mathscr{X}=x_{1}\wedge x_{2}\in \mathfrak{g}^{\wedge 2}$
is called as fundamental object of  $\mathfrak{g}$ and $\forall z\in \mathfrak{g}, \mathscr{X}\cdot z:=[x_{1},x_{2},z].$ Clearly, $|\mathscr{X}|=|x_{1}|+|x_{2}|.$

Let $\mathscr{X}=x_{1}\wedge x_{2}\in \mathfrak{g}^{\wedge 2}$ and $\mathscr{Y}=y_{1}\wedge y_{2}\in \mathfrak{g}^{\wedge 2}$ be two fundamental objects,
$[\cdot,\cdot]_{\alpha}: \mathfrak{g}^{\wedge 2}\times \mathfrak{g}^{\wedge 2}\rightarrow \mathfrak{g}^{\wedge 2}$ is a bilinear map, and it satisfies
$$[\cdot,\cdot]_{\alpha}=\mathscr{X}\cdot y_{1}\wedge \alpha(y_{2})+(-1)^{|\mathscr{X}||y_{1}|}\alpha(y_{1})\wedge \mathscr{X}\cdot y_{2}.$$
$\alpha:  \mathfrak{g}^{\wedge 2}\rightarrow \mathfrak{g}^{\wedge 2}$ is a linear map, and it satisfies $\alpha(\mathscr{X})=\alpha(x_{1})\wedge\alpha(x_{2}),$ then
by a direct computation one gets $\alpha[\mathscr{X},\mathscr{Y}]_{\alpha}=[\alpha(\mathscr{X}),\alpha(\mathscr{Y})]_{\alpha}.$
\end{definition}
By definition \ref{d6.1}, we also have
\begin{definition}
we call $2$-coboundary operator of the $3$-ary multiplicative Hom-Lie superalgebra $(\mathfrak{g}, [\cdot,\cdot,\cdot],\alpha)$
the even linear map $\delta^{2}: C^2(\mathfrak{g}, V)\rightarrow C^{3}(\mathfrak{g}, V)$ by
\begin{align*}
&(\delta^{2} f)(\mathscr{X},\mathscr{Y}, z)\\
=&-f([\mathscr{X},\mathscr{Y}]_{\alpha},\alpha(z))
-(-1)^{|\mathscr{X}||\mathscr{Y}|}f(\alpha(\mathscr{Y}),\mathscr{X}\cdot z)+f(\alpha(\mathscr{X}),  \mathscr{Y}\cdot z)
\\&-(f(\mathscr{X}, ~~)\bullet_{\alpha}\mathscr{Y})\cdot \alpha(z)-(-1)^{|\mathscr{Y}|(|\mathscr{X}|+|f|)}\alpha(\mathscr{Y})\cdot f(\mathscr{X}\cdot z)
+(-1)^{|\mathscr{X}||f|}\alpha(\mathscr{X})\cdot f(\mathscr{Y}\cdot z),
\end{align*}
where $\mathscr{X}=x_{1}\wedge x_{2}, \mathscr{Y}=y_{1}\wedge y_{2}\in\mathfrak{g}^{\wedge^{2}}, z\in\mathfrak{g},$
\begin{equation}\begin{split}
(f(\mathscr{X},~~ )\bullet_{\alpha}\mathscr{Y})\cdot \alpha(z)=&[f(\mathscr{X},y_{1}),\alpha(y_{2}), \alpha(z)]+(-1)^{(|f|+|\mathscr{X}|)|y_{1}|}[\alpha(y_{1}),f(\mathscr{X},y_{2}),\alpha(z)].
\end{split}\end{equation}
\end{definition}
\begin{theorem}\label{t6.5}
Let $(\mathfrak{g}, [\cdot,\cdot],\alpha)$ be a multiplicative Hom-Lie superalgebra, $\mathrm{str}\rho\circ \alpha=\mathrm{str}\rho$ and
$(\mathfrak{g}, [\cdot,\cdot,\cdot]_{\rho},\alpha)$ be the induced $3$-ary Hom-Lie superalgebra. Let $\varphi\in Z^{2}_{ad}(\mathfrak{g},\mathfrak{g}).$
Then $\varphi_{\rho}: \wedge^{2}\mathfrak{g}_{\rho}\wedge \mathfrak{g}\rightarrow \mathfrak{g}$ defined by
$$\varphi_{\rho}(\mathscr{X},z)=\mathrm{str}\rho(x_{1})\varphi(x_{2},z)-(-1)^{|x_{1}||x_{2}|}\mathrm{str}\rho(x_{2})\varphi(x_{1},z)+
(-1)^{|z|(|x_{1}|+|x_{2}|)}\mathrm{str}\rho(z)\varphi(x_{1},x_{2})$$
is a 2-cocycle of the induced 3-ary  Hom-Lie superalgebras, where for $\mathscr{X}=x_{1}\wedge x_{2}\in \wedge^{2}\mathfrak{g}_{\rho}.$
\end{theorem}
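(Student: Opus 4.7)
The plan is to verify $\delta^{2}\varphi_{\rho}(\mathscr{X},\mathscr{Y},z)=0$ directly from the definition of the $3$-ary coboundary operator, reducing the identity to the Hom-Lie $2$-cocycle condition for $\varphi$ together with the compatibility hypothesis $\mathrm{str}\rho\circ\alpha=\mathrm{str}\rho.$ Fix fundamental objects $\mathscr{X}=x_{1}\wedge x_{2}$ and $\mathscr{Y}=y_{1}\wedge y_{2}$ and an element $z\in\mathfrak{g}.$ I would first record the three building blocks that appear inside $\delta^{2}\varphi_{\rho},$ namely $[\mathscr{X},\mathscr{Y}]_{\alpha}$ (computed via Definition \ref{d6.2} with the bracket $\mathscr{X}\cdot w=[x_{1},x_{2},w]_{\rho}$), the actions $\mathscr{X}\cdot z,\mathscr{Y}\cdot z,$ and the composite $(\varphi_{\rho}(\mathscr{X},\,\cdot\,)\bullet_{\alpha}\mathscr{Y})\cdot\alpha(z).$ Each of these, by the construction of $[\cdot,\cdot,\cdot]_{\rho}$ and of $\varphi_{\rho},$ is expressible as a linear combination of evaluations of the binary bracket $[\cdot,\cdot]$ or of $\varphi(\cdot,\cdot)$ multiplied by supertrace factors $\mathrm{str}\rho(\,\cdot\,).$

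Next I would expand $\delta^{2}\varphi_{\rho}(\mathscr{X},\mathscr{Y},z)$ term by term using Definition \ref{d6.1} (adjoint version) and substitute the definition of $\varphi_{\rho}$ in every slot. The strategy is then to collect all terms according to which single supertrace $\mathrm{str}\rho(x_{i}),$ $\mathrm{str}\rho(y_{j}),$ or $\mathrm{str}\rho(z)$ they carry. After this sorting, each group becomes (up to a global sign) a linear combination of the form
\begin{align*}
\mathrm{str}\rho(w)\bigl[&{-}\varphi([u_{1},u_{2}],\alpha(u_{3}))
\pm\varphi([u_{1},u_{3}],\alpha(u_{2}))
\mp\varphi([u_{2},u_{3}],\alpha(u_{1}))\\
&\pm[\alpha(u_{1}),\varphi(u_{2},u_{3})]
\mp[\alpha(u_{2}),\varphi(u_{1},u_{3})]
\pm[\alpha(u_{3}),\varphi(u_{1},u_{2})]\bigr],
\end{align*}
where $(u_{1},u_{2},u_{3})$ is a triple drawn from the remaining variables. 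The bracketed expression is exactly $\delta^{2}\varphi(u_{1},u_{2},u_{3})$ in the adjoint cohomology complex of the Hom-Lie superalgebra $(\mathfrak{g},[\cdot,\cdot],\alpha),$ which vanishes because $\varphi\in Z^{2}_{ad}(\mathfrak{g},\mathfrak{g}).$

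The auxiliary terms that do not immediately fit such a triple (those in which $\alpha$ acts on a supertrace argument, or in which an $\alpha$ has been moved across a bracket) are handled using $\mathrm{str}\rho\circ\alpha=\mathrm{str}\rho$ together with the multiplicativity $\alpha[\,\cdot\,,\,\cdot\,]=[\alpha(\cdot),\alpha(\cdot)],$ in the same spirit as the grouping of the twenty-four terms in the proof of Theorem \ref{th1.27}. The skew-supersymmetry of $\varphi_{\rho}$ in its first two arguments follows from that of $\varphi$ by the same calculation as in the first half of the proof of Theorem \ref{th1.24}, so only the cocycle identity needs to be checked.

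The main obstacle will be the bookkeeping: there are on the order of thirty terms after expansion, each carrying a $\mathbb{Z}_{2}$-sign coming from permuting homogeneous elements past one another, and one must check that the signs assembled in each supertrace-grouped bundle match exactly the signs of $\delta^{2}\varphi(u_{1},u_{2},u_{3})$ up to a common prefactor. I expect the cleanest way to handle this is to fix once and for all the cyclic ordering convention used in Definition \ref{d6.1}, then process the six supertrace bundles (indexed by which of $x_{1},x_{2},y_{1},y_{2},z$ — combined in pairs — supplies the outer $\mathrm{str}\rho$) in parallel, invoking the Hom-Jacobi identity for $[\cdot,\cdot]$ to kill the residual terms generated by the $\bullet_{\alpha}$ piece of the coboundary.
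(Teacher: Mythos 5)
Your plan follows the same route as the paper: expand $(\delta^{2}\varphi_{\rho})(\mathscr{X},\mathscr{Y},z)$ term by term from the definition of the $2$-coboundary operator, substitute the definition of $\varphi_{\rho}$ in every slot, sort the resulting terms by the pair of supertrace factors they carry, and reduce each bundle to the vanishing of $\delta^{2}\varphi$ on a triple of the remaining variables, using $\mathrm{str}\rho\circ\alpha=\mathrm{str}\rho$ and multiplicativity of $\alpha$ to normalize the arguments. That is exactly the structure of the paper's computation, so there is no methodological divergence to report.

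There is, however, one concrete gap in the proposal as written. When you expand $\varphi_{\rho}([\mathscr{X},\mathscr{Y}]_{\alpha},\alpha(z))$, the first slot of $\varphi_{\rho}$ receives fundamental objects such as $[x_{1},x_{2},y_{1}]_{\rho}\wedge\alpha(y_{2})$, and the definition of $\varphi_{\rho}$ then produces terms of the form $\mathrm{str}\rho\bigl([x_{1},x_{2},y_{1}]_{\rho}\bigr)\,\varphi(\alpha(y_{2}),\alpha(z))$; analogous terms with the supertrace landing on a nested bracket also come out of the $\bullet_{\alpha}$ piece. These terms do not fit into any $\delta^{2}\varphi$-bundle, and they are not removed by $\mathrm{str}\rho\circ\alpha=\mathrm{str}\rho$, by multiplicativity, or by the Hom-Jacobi identity for $[\cdot,\cdot]$, which is what you propose to use on the residual terms. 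The paper disposes of them with the separate identity $\mathrm{str}\rho([x,y])=0$ (the supertrace of a graded commutator vanishes, so $\mathrm{str}\rho$ annihilates $[\mathfrak{g},\mathfrak{g}]$ and hence every value of $[\cdot,\cdot,\cdot]_{\rho}$). Without explicitly adding this ingredient your cancellation scheme does not close; with it, your bookkeeping strategy coincides with the paper's and the remaining work is the (admittedly long) sign verification you already anticipate.
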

\begin{proof} Let $\varphi\in Z^{2}_{ad}(\mathfrak{g},\mathfrak{g}).$  Then we have
\begin{align*}
&(\delta^{2} \varphi_{\rho})(\mathscr{X},\mathscr{Y}, z)\\
=&-\varphi_{\rho}([\mathscr{X},\mathscr{Y}]_{\alpha},\alpha(z))
-(-1)^{|\mathscr{X}||\mathscr{Y}|}\varphi_{\rho}(\alpha(\mathscr{Y}),\mathscr{X}\cdot z)-(\varphi_{\rho}(\mathscr{X}, ~~)\bullet_{\alpha}\mathscr{Y})\cdot \alpha(z)
\\&+\varphi_{\rho}(\alpha(\mathscr{X}),  \mathscr{Y}\cdot z)-(-1)^{|\mathscr{Y}|(|\mathscr{X}|+|\varphi_{\rho}|)}\alpha(\mathscr{Y})\cdot \varphi_{\rho}(\mathscr{X}\cdot z)
+(-1)^{|\mathscr{X}||\varphi_{\rho}|}\alpha(\mathscr{X})\cdot \varphi_{\rho}(\mathscr{Y}\cdot z)
\\=&(-1)^{|y_{2}|(|x_{2}|+|y_{1}|)}\mathrm{str}\rho(x_{1})\mathrm{str}\rho(\alpha(y_{2}))\varphi([x_{2},y_{1}],\alpha(z))
\\&-(-1)^{|z|(|x_{2}|+|y_{1}|+|y_{2}|)}\mathrm{str}\rho(x_{1})\mathrm{str}\rho(\alpha(z))\varphi([x_{2},y_{1}],\alpha(y_{2}))
\\&-(-1)^{|x_{2}||x_{1}|+|y_{2}|(|x_{1}|+|y_{1}|)}\mathrm{str}\rho(x_{2})\mathrm{str}\rho(\alpha(y_{2}))\varphi([x_{1},y_{1}],\alpha(z))
\\&+(-1)^{|z|(|x_{1}|+|y_{1}|+|y_{2}|)+|x_{2}||x_{1}|}\mathrm{str}\rho(x_{2})\mathrm{str}\rho(\alpha(z))\varphi([x_{1},y_{1}],\alpha(y_{2}))
\\&-(-1)^{|y_{1}|(|x_{2}|+|x_{1}|)+|z|(|x_{1}|+|x_{2}|+|y_{2}|)}\mathrm{str}\rho(y_{1})\mathrm{str}\rho(\alpha(z))\varphi([x_{1},x_{2}],\alpha(y_{2}))
\\&-(-1)^{|\mathscr{X}||y_{1}|}\mathrm{str}\rho(x_{1})\mathrm{str}\rho(\alpha(y_{1}))\varphi([x_{2},y_{2}],\alpha(z))
\\&-(-1)^{|\mathscr{X}||y_{1}|+|z|(|y_{1}|+|y_{2}|+|x_{2}|)}\mathrm{str}\rho(x_{1})\mathrm{str}\rho(\alpha(z))\varphi(\alpha(y_{1}),[x_{2},y_{2}])
\\&+(-1)^{|\mathscr{X}||y_{1}|+|x_{1}||x_{2}|}\mathrm{str}\rho(x_{2})\mathrm{str}\rho(\alpha(y_{1}))\varphi([x_{1},y_{2}],\alpha(z))
\\&+(-1)^{|\mathscr{X}||y_{1}|+|x_{1}||x_{2}|+|z|(|y_{1}|+|y_{2}|+|x_{1}|)}\mathrm{str}\rho(x_{2})\mathrm{str}\rho(\alpha(z))\varphi(\alpha(y_{1}),[x_{1},y_{2}])
\\&-(-1)^{|\mathscr{X}||y_{1}|+|y_{2}|(|x_{1}|+|x_{2}|)+|z|(|x_{1}|+|x_{2}|+|y_{1}|)}\mathrm{str}\rho(y_{2})\mathrm{str}\rho(\alpha(z))
\varphi(\alpha(y_{1}),[x_{1},x_{2}])
\\&-(-1)^{|\mathscr{X}||\mathscr{Y}|}\mathrm{str}\rho(x_{1})\mathrm{str}\rho(\alpha(y_{1}))\varphi(\alpha(y_{2}),[x_{2},z])
\\&+(-1)^{|\mathscr{X}||\mathscr{Y}|+|y_{1}||y_{2}|}\mathrm{str}\rho(x_{1})\mathrm{str}\rho(\alpha(y_{2}))\varphi(\alpha(y_{1}),[x_{2},z])
\\&+(-1)^{|\mathscr{X}||\mathscr{Y}|+|x_{1}||x_{2}|}\mathrm{str}\rho(x_{2})\mathrm{str}\rho(\alpha(y_{1}))\varphi(\alpha(y_{2}),[x_{1},z])
\\&-(-1)^{|\mathscr{X}||\mathscr{Y}|+|x_{1}||x_{2}|+|y_{1}||y_{2}|}\mathrm{str}\rho(x_{2})\mathrm{str}\rho(\alpha(y_{2}))\varphi(\alpha(y_{1}),[x_{1},z])
\\&-(-1)^{|\mathscr{X}||\mathscr{Y}|+|z|(|x_{1}|+|x_{2}|)}\mathrm{str}\rho(z)\mathrm{str}\rho(\alpha(y_{1}))\varphi(\alpha(y_{2}),[x_{1},x_{2}])
\\&+(-1)^{|\mathscr{X}||\mathscr{Y}|+|z|(|x_{1}|+|x_{2}|)+|y_{1}||y_{2}|}\mathrm{str}\rho(z)\mathrm{str}\rho(\alpha(y_{2}))\varphi(\alpha(y_{1}),[x_{1},x_{2}])
\\&+(-1)^{|y_{2}|(|x_{2}|+|y_{1}|)}\mathrm{str}\rho(x_{1})\mathrm{str}\rho(\alpha(y_{2}))\varphi([x_{2},y_{1}],\alpha(z))
\\&-(-1)^{|z|(|x_{2}|+|y_{1}|+|y_{2}|)}\mathrm{str}\rho(x_{1})\mathrm{str}\rho(\alpha(z))\varphi([x_{2},y_{1}],\alpha(y_{2}))
\\&-(-1)^{|x_{2}||x_{1}|+|y_{2}|(|x_{1}|+|y_{1}|)}\mathrm{str}\rho(x_{2})\mathrm{str}\rho(\alpha(y_{2}))\varphi([x_{1},y_{1}],\alpha(z))
\\&+(-1)^{|z|(|x_{1}|+|y_{1}|+|y_{2}|)+|x_{2}||x_{1}|}\mathrm{str}\rho(x_{2})\mathrm{str}\rho(\alpha(z))\varphi([x_{1},y_{1}],\alpha(y_{2}))
\\&+(-1)^{(|y_{1}|+|y_{2}|)(|x_{2}|+|x_{1}|)}\mathrm{str}\rho(y_{1})\mathrm{str}\rho(\alpha(y_{2}))\varphi([x_{1},x_{2}],\alpha(z))
\\&-(-1)^{|y_{1}|(|x_{2}|+|x_{1}|)+|z|(|y_{2}|+|x_1|+|x_{2}|)}\mathrm{str}\rho(y_{1})\mathrm{str}\rho(\alpha(z))\varphi([x_{1},x_{2}],\alpha(y_{2}))
\\&-(-1)^{(|\varphi_{\rho}|+|\mathscr{X}|)|y_{1}|}\mathrm{str}\rho(x_{1})\mathrm{str}\rho(\alpha(y_{1}))\varphi([x_{2},y_{2}],\alpha(z))
\\&-(-1)^{(|\varphi_{\rho}|+|\mathscr{X}|)|y_{1}|+|z|(|y_{1}|+|y_{2}|+|x_{2}|)}\mathrm{str}\rho(x_{1})\mathrm{str}\rho(\alpha(z))\varphi(\alpha(y_{1}),[x_{2},y_{2}])
\\&+(-1)^{(|\varphi_{\rho}|+|\mathscr{X}|)|y_{1}|+|x_{1}||x_{2}|}\mathrm{str}\rho(x_{2})\mathrm{str}\rho(\alpha(y_{1}))\varphi([x_{1},y_{2}],\alpha(z))
\\&+(-1)^{(|\varphi_{\rho}|+|\mathscr{X}|)|y_{1}|+|x_{1}||x_{2}|+|z|(|y_{1}|+|y_{2}|+|x_{1}|)}\mathrm{str}\rho(x_{2})\mathrm{str}\rho(\alpha(z))
\varphi(\alpha(y_{1}),[x_{1},y_{2}])
\\&-(-1)^{(|\varphi_{\rho}|+|\mathscr{X}|)|y_{1}|+|y_{2}|(|x_{1}|+|x_{2}|)}\mathrm{str}\rho(y_{2})\mathrm{str}\rho(\alpha(y_{1}))
\varphi([x_{1},x_{2}],\alpha(z))
\\&-(-1)^{(|\varphi_{\rho}|+|\mathscr{X}|)|y_{1}|+|y_{2}|(|x_{1}|+|x_{2}|)+|z|(|y_{1}|+|x_{2}|+|x_{1}|)}\mathrm{str}\rho(y_{2})\mathrm{str}\rho(\alpha(z))
\varphi(\alpha(y_{1}),[x_{1},x_{2}])
\\&+\mathrm{str}\rho(y_{1})\mathrm{str}\rho(\alpha(x_{1}))\varphi(\alpha(x_{2}),[y_{2},z])
\\&-(-1)^{|x_{1}||x_{2}|}\mathrm{str}\rho(y_{1})\mathrm{str}\rho(\alpha(x_{2}))\varphi(\alpha(x_{1}),[y_{2},z])
\\&-(-1)^{|y_{1}||y_{2}|}\mathrm{str}\rho(y_{2})\mathrm{str}\rho(\alpha(x_{1}))\varphi(\alpha(x_{2}),[y_{1},z])
\\&+(-1)^{|y_{1}||y_{2}|+|x_{1}||x_{2}|}\mathrm{str}\rho(y_{2})\mathrm{str}\rho(\alpha(x_{2}))\varphi(\alpha(x_{1}),[y_{1},z])
\\&+(-1)^{|z|(|y_{1}|+|y_{2}|)}\mathrm{str}\rho(z)\mathrm{str}\rho(\alpha(x_{1}))\varphi(\alpha(x_{2}),[y_{1},y_{2}])
\\&-(-1)^{|z|(|y_{1}|+|y_{2}|)+|x_{1}||x_{2}|}\mathrm{str}\rho(z)\mathrm{str}\rho(\alpha(x_{2}))\varphi(\alpha(x_{1}),[y_{1},y_{2}])
\\&-(-1)^{|\mathscr{Y}|(|\mathscr{X}|+|\varphi_{\rho}|)}\mathrm{str}\rho(x_{1})\mathrm{str}\rho(\alpha(y_{1}))\varphi(\alpha(y_{2}),[x_{2},z])
\\&+(-1)^{|\mathscr{Y}|(|\mathscr{X}|+|\varphi_{\rho}|)+|y_{1}||y_{2}|}\mathrm{str}\rho(x_{1})\mathrm{str}\rho(\alpha(y_{2}))\varphi(\alpha(y_{1}),[x_{2},z])
\\&+(-1)^{|\mathscr{Y}|(|\mathscr{X}|+|\varphi_{\rho}|)+|x_{1}||x_{2}|}\mathrm{str}\rho(x_{2})\mathrm{str}\rho(\alpha(y_{1}))
\varphi(\alpha(y_{2}),[x_{1},z])
\\&-(-1)^{|\mathscr{Y}|(|\mathscr{X}|+|\varphi_{\rho}|)+|x_{1}||x_{2}|+|y_{1}||y_{2}|}\mathrm{str}\rho(x_{2})\mathrm{str}\rho(\alpha(y_{2}))
\varphi(\alpha(y_{1}),[x_{1},z])
\\&-(-1)^{|\mathscr{Y}|(|\mathscr{X}|+|\varphi_{\rho}|)+|z|(|x_{1}|+|x_{2}|)}\mathrm{str}\rho(z)\mathrm{str}\rho(\alpha(y_{1}))\varphi(\alpha(y_{2}),[x_{1},x_{2}])
\\&+(-1)^{|\mathscr{Y}|(|\mathscr{X}|+|\varphi_{\rho}|)+|z|(|x_{1}|+|x_{2}|)+|y_{1}||y_{2}|}\mathrm{str}\rho(z)\mathrm{str}\rho(\alpha(y_{2}))
\varphi(\alpha(y_{1}),[x_{1},x_{2}])
\\&+(-1)^{|\mathscr{X}||\varphi_{\rho}|}\mathrm{str}\rho(y_{1})\mathrm{str}\rho(\alpha(x_{1}))\varphi(\alpha(x_{2}),[y_{2},z])
\\&-(-1)^{|\mathscr{X}||\varphi_{\rho}|+|x_{1}||x_{2}|}\mathrm{str}\rho(y_{1})\mathrm{str}\rho(\alpha(x_{2}))\varphi(\alpha(x_{1}),[y_{2},z])
\\&-(-1)^{|\mathscr{X}||\varphi_{\rho}|+|y_{1}||y_{2}|}\mathrm{str}\rho(y_{2})\mathrm{str}\rho(\alpha(x_{1}))
\varphi(\alpha(x_{2}),[y_{1},z])
\\&+(-1)^{|\mathscr{X}||\varphi_{\rho}|+|y_{1}||y_{2}|+|x_{1}||x_{2}|}\mathrm{str}\rho(y_{2})\mathrm{str}\rho(\alpha(x_{2}))
\varphi(\alpha(x_{1}),[y_{1},z])
\\&+(-1)^{|\mathscr{X}||\varphi_{\rho}|+|z|(|y_{1}|+|y_{2}|)}\mathrm{str}\rho(z)\mathrm{str}\rho(\alpha(x_{1}))\varphi(\alpha(x_{2}),[y_{1},y_{2}])
\\&-(-1)^{|\mathscr{X}||\varphi_{\rho}|+|z|(|y_{1}|+|y_{2}|)+|x_{1}||x_{2}|}\mathrm{str}\rho(z)\mathrm{str}\rho(\alpha(x_{2}))
\varphi(\alpha (x_{1}),[y_{1},y_{2}])
\\=&2(-1)^{|y_{2}|(|x_{2}|+|y_{1}|)}\mathrm{str}\rho(x_{1})\mathrm{str}\rho(\alpha(y_{2}))\varphi([x_{2},y_{1}],\alpha(z))
\\&-2(-1)^{|z|(|x_{2}|+|y_{1}|+|y_{2}|)}\mathrm{str}\rho(x_{1})\mathrm{str}\rho(\alpha(z))\varphi([x_{2},y_{1}],\alpha(y_{2}))
\\&-2(-1)^{|x_{2}||x_{1}|+|y_{2}|(|x_{1}|+|y_{1}|)}\mathrm{str}\rho(x_{2})\mathrm{str}\rho(\alpha(y_{2}))\varphi([x_{1},y_{1}],\alpha(z))
\\&+2(-1)^{|z|(|x_{1}|+|y_{1}|+|y_{2}|)+|x_{2}||x_{1}|}\mathrm{str}\rho(x_{2})\mathrm{str}\rho(\alpha(z))\varphi([x_{1},y_{1}],\alpha(y_{2}))
\\&-2(-1)^{(|y_{1}|+|y_{2}|)(|x_{2}|+|x_{1}|)}\mathrm{str}\rho(x_{1})\mathrm{str}\rho(\alpha(y_{1}))\varphi(\alpha(y_{2}),[x_{2},z])
\\&+2(-1)^{(|y_{1}|+|y_{2}|)(|x_{2}|+|x_{1}|)+|y_{2}||y_1|}\mathrm{str}\rho(x_{1})\mathrm{str}\rho(\alpha(y_{2}))\varphi(\alpha(y_{1}),[x_{2},z])
\\&+2(-1)^{|\mathscr{X}||\mathscr{Y}|+|x_{1}||x_{2}|}\mathrm{str}\rho(x_{2})\mathrm{str}\rho(\alpha(y_{1}))\varphi(\alpha(y_{2}),[x_{1},z])
\\&-2(-1)^{|\mathscr{X}||\mathscr{Y}|+|x_{1}||x_{2}|+|y_{1}||y_{2}|}\mathrm{str}\rho(x_{2})\mathrm{str}\rho(\alpha(y_{2}))\varphi(\alpha(y_{1}),[x_{1},z])
\\&+2(-1)^{|\mathscr{X}||\mathscr{Y}|+|z|(|x_{1}|+|x_{2}|)+|y_{1}||y_{2}|}\mathrm{str}\rho(z)\mathrm{str}\rho(\alpha(y_{2}))\varphi(\alpha(y_{1}),[x_{1},x_{2}])
\\&+2(-1)^{|\mathscr{X}||\varphi_{\rho}|}\mathrm{str}\rho(y_{1})\mathrm{str}\rho(\alpha(x_{1}))\varphi(\alpha(x_{2}),[y_{2},z])
\\&-2(-1)^{|\mathscr{X}||\varphi_{\rho}|+|x_{1}||x_{2}|}\mathrm{str}\rho(y_{1})\mathrm{str}\rho(\alpha(x_{2}))\varphi(\alpha(x_{1}),[y_{2},z])
\\&-2(-1)^{|\mathscr{X}||\varphi_{\rho}|+|y_{1}||y_{2}|}\mathrm{str}\rho(y_{2})\mathrm{str}\rho(\alpha(x_{1}))\varphi(\alpha(x_{2}),[y_{1},z])
\\&+2(-1)^{|\mathscr{X}||\varphi_{\rho}|+|y_{1}||y_{2}|+|x_{1}||x_{2}|}\mathrm{str}\rho(y_{2})\mathrm{str}\rho(\alpha(x_{2}))
\varphi(\alpha(x_{1}),[y_{1},z])
\\&+2(-1)^{|\mathscr{X}||\varphi_{\rho}|+|z|(|y_{1}|+|y_{2}|)}\mathrm{str}\rho(z)\mathrm{str}\rho(\alpha(x_{1}))\varphi(\alpha(x_{2}),[y_{1},y_{2}])
\\&-2(-1)^{|\mathscr{X}||\varphi_{\rho}|+|z|(|y_{1}|+|y_{2}|)+|x_{1}||x_{2}|}\mathrm{str}\rho(z)\mathrm{str}\rho(\alpha(x_{2}))
\varphi(\alpha (x_{1}),[y_{1},y_{2}])
\\&-2(-1)^{(|\varphi_{\rho}|+|\mathscr{X}|)|y_{1}|}\mathrm{str}\rho(x_{1})\mathrm{str}\rho(\alpha(y_{1}))\varphi([x_{2},y_{2}],\alpha(z))
\\&-2(-1)^{(|\varphi_{\rho}|+|\mathscr{X}|)|y_{1}|+|z|(|y_{1}|+|y_{2}|+|x_{2}|)}\mathrm{str}\rho(x_{1})\mathrm{str}\rho(\alpha(z))\varphi(\alpha(y_{1}),[x_{2},y_{2}])
\\&+2(-1)^{(|\varphi_{\rho}|+|\mathscr{X}|)|y_{1}|+|x_{1}||x_{2}|}\mathrm{str}\rho(x_{2})\mathrm{str}\rho(\alpha(y_{1}))\varphi([x_{1},y_{2}],\alpha(z))
\\&+2(-1)^{(|\varphi_{\rho}|+|\mathscr{X}|)|y_{1}|+|x_{1}||x_{2}|+|z|(|y_{1}|+|y_{2}|+|x_{1}|)}\mathrm{str}\rho(x_{2})\mathrm{str}\rho(\alpha(z))
\varphi(\alpha(y_{1}),[x_{1},y_{2}])
\\&-2(-1)^{(|\varphi_{\rho}|+|\mathscr{X}|)|y_{1}|+|y_{2}|(|x_{1}|+|x_{2}|)+|z|(|y_{1}|+|x_{2}|+|x_{1}|)}\mathrm{str}\rho(y_{2})\mathrm{str}\rho(\alpha(z))
\varphi(\alpha(y_{1}),[x_{1},x_{2}])
\\&=0,
\end{align*}
by $\mathrm{str}\rho[x,y]=0,$ and  $\delta^{2}\varphi(x,y,z)=(-1)^{|x|(|y|+|z|)}\varphi(\alpha(y),[z,x])+(-1)^{|z|(|x|+|y|)}\varphi(\alpha(z),[x,$\\$y])+\varphi(\alpha(x),[y,z])=0,$ since $\varphi\in Z^{2}_{ad}(\mathfrak{g},\mathfrak{g}).$
\end{proof}
In particular, by Theorem \ref{t6.5}, we have the following corollary:
\begin{corollary}
Let $(\mathfrak{g}, [\cdot,\cdot],\alpha)$ be a multiplicative Hom-Lie superalgebra, $\mathrm{str}\rho\circ \alpha=\mathrm{str}\rho$ and
$(\mathfrak{g}, [\cdot,\cdot,\cdot]_{\rho},\alpha)$ be the induced $3$-ary Hom-Lie superalgebra. Let $\varphi\in Z^{2}_{0}(\mathfrak{g},K).$
Then $\varphi_{\rho}: \wedge^{2}\mathfrak{g}_{\rho}\wedge \mathfrak{g}\rightarrow K$ defined by
$$\varphi_{\rho}(\mathscr{X},z)=\mathrm{str}\rho(x_{1})\varphi(x_{2},z)-(-1)^{|x_{1}||x_{2}|}\mathrm{str}\rho(x_{2})\varphi(x_{1},z)+
(-1)^{|z|(|x_{1}|+|x_{2}|)}\mathrm{str}\rho(z)\varphi(x_{1},x_{2})$$
is a 2-cocycle of the induced 3-ary  Hom-Lie superalgebras, where for $\mathscr{X}=x_{1}\wedge x_{2}\in \wedge^{2}\mathfrak{g}_{\rho}.$
\end{corollary}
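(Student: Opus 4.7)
My plan is to obtain the corollary as a direct specialization of Theorem \ref{t6.5} to scalar coefficients, so the proof should essentially mirror the adjoint-valued computation but with strictly fewer terms. First I would observe that $\varphi_{\rho}$ as defined is indeed a skew-supersymmetric trilinear form on $\mathfrak{g}$ with values in $K$: the verification is literally the same as the verification that $[\cdot,\cdot,\cdot]_{\rho}$ is skew-supersymmetric in Theorem \ref{th1.24}, since $\varphi_{\rho}$ has exactly the same combinatorial shape in $x_1,x_2,x_3$ with $\varphi$ replacing the bracket.

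Next, I would compute $\delta^{2}\varphi_{\rho}$ using the scalar coboundary of Definition \ref{d6.1}. Compared with the coboundary in Theorem \ref{t6.5}, this one has \emph{only} the three terms
\[
-\varphi_{\rho}([\mathscr{X},\mathscr{Y}]_{\alpha},\alpha(z))-(-1)^{|\mathscr{X}||\mathscr{Y}|}\varphi_{\rho}(\alpha(\mathscr{Y}),\mathscr{X}\cdot z)+\varphi_{\rho}(\alpha(\mathscr{X}),\mathscr{Y}\cdot z),
\]
because the three ``action'' terms $(f(\mathscr{X},\_)\bullet_{\alpha}\mathscr{Y})\cdot\alpha(z)$, $\alpha(\mathscr{Y})\cdot f(\mathscr{X}\cdot z)$, $\alpha(\mathscr{X})\cdot f(\mathscr{Y}\cdot z)$ are absent when the target is $K$ (there is no $\mathfrak{g}$-action on the ground field). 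Expanding each of the three retained terms via the definitions of $\varphi_{\rho}$, of $\mathscr{X}\cdot z=[x_{1},x_{2},z]_{\rho}$, and of $[\cdot,\cdot,\cdot]_{\rho}$ and $[\cdot,\cdot]_{\alpha}$ produces a finite collection of monomials of the form $\pm\,\mathrm{str}\rho(u)\mathrm{str}\rho(\alpha(v))\,\varphi([a,b],\alpha(c))$ or $\pm\,\mathrm{str}\rho(u)\mathrm{str}\rho(\alpha(v))\,\varphi(\alpha(a),[b,c])$, exactly the types of terms that already appear in the proof of Theorem \ref{t6.5}.

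The cancellation argument is then the same as in Theorem \ref{t6.5}, using the three ingredients $\mathrm{str}\rho([x,y])=0$, $\mathrm{str}\rho\circ\alpha=\mathrm{str}\rho$, and the scalar $2$-cocycle condition
\[
\varphi(\alpha(x),[y,z])+(-1)^{|x|(|y|+|z|)}\varphi(\alpha(y),[z,x])+(-1)^{|z|(|x|+|y|)}\varphi(\alpha(z),[x,y])=0,
\]
which holds because $\varphi\in Z^{2}_{0}(\mathfrak{g},K)$. The resulting monomials can be grouped into triples (indexed by the choice of which pair of $\mathrm{str}\rho$-factors is fixed), and each triple collapses to zero by the scalar cocycle identity applied to an appropriate substitution of homogeneous elements.

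The main obstacle, exactly as in Theorem \ref{t6.5}, is purely bookkeeping: ensuring that after removing the action terms the remaining monomials still pair up and cancel without leaving a residue. I would argue this by inspecting the proof of Theorem \ref{t6.5}: the terms in that proof that involved the adjoint action (those of the form $\mathrm{str}\rho(u)\mathrm{str}\rho(\alpha(v))\,\alpha(a)\cdot\varphi(\ldots)$) cancelled \emph{among themselves}, independently of the non-action terms, again by $\mathrm{str}\rho([x,y])=0$ and the cocycle condition. Consequently, removing them does not unbalance the remaining cancellation, and one concludes $\delta^{2}\varphi_{\rho}=0$, so $\varphi_{\rho}\in Z^{2}_{0}(\mathfrak{g}_{\rho},K)$.
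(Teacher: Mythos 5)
Your proposal is correct and matches the paper's treatment: the paper gives no separate proof of this corollary, deriving it with the phrase ``In particular, by Theorem \ref{t6.5}'' exactly as you do, i.e.\ by specializing the adjoint-valued computation to scalar coefficients where the action terms disappear and the remaining monomials cancel via $\mathrm{str}\rho([x,y])=0$, $\mathrm{str}\rho\circ\alpha=\mathrm{str}\rho$, and the scalar $2$-cocycle identity. Your added observation that the three retained terms can be handled self-containedly (without relying on the absent action terms) is if anything more careful than the paper's one-line justification.
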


\begin{proposition}
Every 1-cocycle for the scalar cohomology of a multiplicative Hom-Lie superalgebra $(\mathfrak{g}, [\cdot,\cdot],\alpha)$
is a 1-cocycle for the scalar cohomology of the induced algebra.
\end{proposition}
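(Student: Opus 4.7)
The plan is to exploit the defining feature of the induced ternary bracket: by the very formula appearing before Theorem~\ref{th1.24}, $[x_1,x_2,x_3]_\rho$ is a $K$-linear combination of the binary brackets $[x_i,x_j]$ with scalar coefficients $\mathrm{str}\rho(x_k)$. Consequently, any linear functional that annihilates every binary bracket will automatically annihilate every induced ternary bracket, and the proof should reduce to a short substitution.

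First I would unpack both cocycle conditions. On the binary side, the formula for $d_s$ given after the definition of $C^{p}_{\alpha}(\mathfrak{g};K)$ shows that a scalar $1$-cocycle of $(\mathfrak{g},[\cdot,\cdot],\alpha)$ is a linear map $f:\mathfrak{g}\to K$ with $f\circ\alpha=f$ and $f([x,y])=0$ for all homogeneous $x,y\in\mathfrak{g}$. On the ternary side, the analogous scalar $1$-cocycle condition for $(\mathfrak{g},[\cdot,\cdot,\cdot]_\rho,\alpha)$ reduces, in the same way, to $f\circ\alpha=f$ together with $f([x_1,x_2,x_3]_\rho)=0$ for all homogeneous $x_1,x_2,x_3\in\mathfrak{g}$.

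Next, starting from a scalar $1$-cocycle $f$ of the Hom-Lie superalgebra, I would check the ternary cocycle identity by plugging in the defining formula of $[\cdot,\cdot,\cdot]_\rho$ and using linearity of $f$:
\begin{align*}
f([x_1,x_2,x_3]_\rho)
=\ &\mathrm{str}\rho(x_1)\,f([x_2,x_3])
 -(-1)^{|x_1||x_2|}\mathrm{str}\rho(x_2)\,f([x_1,x_3])\\
&+(-1)^{|x_3|(|x_1|+|x_2|)}\mathrm{str}\rho(x_3)\,f([x_1,x_2]),
\end{align*}
and each of the three summands vanishes because $f$ already kills every binary bracket. The compatibility $f\circ\alpha=f$ transfers for free, since the Hom-structure map $\alpha$ is unchanged in passing to the induced algebra.

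I do not anticipate any genuine obstacle here. Once the two cocycle conditions are written out, the statement becomes an immediate consequence of the definition of the induced bracket together with the linearity of $f$; the only mild point worth noting is that each $\mathrm{str}\rho(x_i)\in K$ is a scalar, so it commutes through $f$ in the computation above.
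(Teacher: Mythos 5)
Your argument is correct and is essentially the paper's own: the paper notes that the $1$-cocycle condition means $[\mathfrak{g},\mathfrak{g}]\subset\ker\omega$ and then invokes the inclusion $[\mathfrak{g},\mathfrak{g},\mathfrak{g}]_{\rho}\subset[\mathfrak{g},\mathfrak{g}]$ from Remark \ref{re4.13}, which is exactly the expansion of $[\cdot,\cdot,\cdot]_{\rho}$ that you carry out explicitly. No gap.
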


\begin{proof} Let $\omega$ be a 1-cocycle for the scalar cohomology of $(\mathfrak{g}, [\cdot,\cdot],\alpha).$  Then
$$\forall x_{1}, z\in \mathfrak{g},  \, \delta^{1}\omega(x_{1}, z)=-\omega([x_{1}, z])=0,$$
which is equivalent to $[\mathfrak{g},\mathfrak{g}]\subset \mathrm{ker}\omega,$ by Remark \ref{re4.13}, $[\mathfrak{g},\mathfrak{g},\mathfrak{g}]_{\rho}\subset [\mathfrak{g},\mathfrak{g}],$ then $[\mathfrak{g},\mathfrak{g},\mathfrak{g}]_{\rho}\subset \mathrm{ker}\omega,$ that is,
$$\forall x_{1}, x_{2}, z\in \mathfrak{g},  \, \omega([x_{1}, x_{2}, z]_{\rho})=-\delta^{1}\omega(x_{1}, x_{2}, z)=0.$$
It means that $\omega$ is a 1-cocycle for the scalar cohomology of $(\mathfrak{g}, [\cdot,\cdot,\cdot]_{\rho},\alpha).$
\end{proof}

\begin{lemma} Let $\omega\in C^{1}(\mathfrak{g},K).$ Then
\begin{align*}
\delta_{\rho}^{1}\omega(x_{1},x_{2},x_{3})&=\mathrm{str}\rho(x_{1})\delta^{1}\omega(x_{2},x_{3})-(-1)^{|x_{1}||x_{2}|}
\mathrm{str}\rho(x_{2})\delta^{1}\omega(x_{1},x_{3})
\\&+
(-1)^{|x_{3}|(|x_{1}|+|x_{2}|)}\mathrm{str}\rho(x_{3})\delta^{1}\omega(x_{1},x_{2}),  \forall x_{1},x_{2},x_{3}\in\mathfrak{g}.
\end{align*}
where $\delta_{\rho}^{p}$ is the coboundary operator for the cohomology  complex of the induced algebra $\mathfrak{g}_{\rho}.$
\end{lemma}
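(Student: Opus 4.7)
The plan is to handle this by a direct unwinding of the two coboundary formulas together with the definition of the induced 3-ary bracket. Since $\omega \in C^1(\mathfrak{g}, K)$ is a scalar-valued $1$-cochain, both $\delta^1\omega(x,y) = -\omega([x,y])$ in the binary setting and the corresponding scalar coboundary on the induced $3$-ary side reduce to a single term: $\delta_\rho^1\omega(x_1,x_2,x_3) = -\omega([x_1,x_2,x_3]_\rho)$. There is no multi-term Chevalley--Eilenberg-type sum to worry about at the $1$-cochain level, so the identity is essentially algebraic bookkeeping.

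First I would write down the right-hand side after substituting the definition of the induced bracket
\begin{align*}
[x_1,x_2,x_3]_\rho &= \mathrm{str}\rho(x_1)[x_2,x_3] - (-1)^{|x_1||x_2|}\mathrm{str}\rho(x_2)[x_1,x_3] \\
&\quad + (-1)^{|x_3|(|x_1|+|x_2|)}\mathrm{str}\rho(x_3)[x_1,x_2]
\end{align*}
from the paragraph preceding Theorem \ref{th1.24}. Applying $-\omega$ to both sides and using the linearity of $\omega$ (noting that each $\mathrm{str}\rho(x_i)$ is a scalar in $K$, so it passes through $\omega$) produces three terms of the form $\mathrm{str}\rho(x_i)\cdot\bigl(-\omega([x_j,x_k])\bigr)$ with the appropriate signs.

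Next I would identify each factor $-\omega([x_j,x_k])$ as $\delta^1\omega(x_j,x_k)$, using the standard formula for the scalar coboundary on the Hom-Lie superalgebra $(\mathfrak{g},[\cdot,\cdot],\alpha)$ recalled (implicitly via the preceding proposition, $\delta^1\omega(x,z) = -\omega([x,z])$). Collecting terms then yields exactly the claimed right-hand side.

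The only point that requires care — and the closest thing to an obstacle — is matching the sign conventions and the ordering of the Koszul signs in the scalar coboundary of the $3$-ary complex (Definition \ref{d6.1}) with the defining formula of $[\cdot,\cdot,\cdot]_\rho$, so as to confirm that $\delta_\rho^1\omega$ is indeed $-\omega\circ[\cdot,\cdot,\cdot]_\rho$ with no additional sign and no surviving action terms (since the coefficients are scalars and there is no representation action on $K$). Once that identification is made, the lemma follows in one line from the $K$-linearity of $\omega$.
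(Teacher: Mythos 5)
Your proposal is correct and follows exactly the paper's own argument: the paper likewise writes $\delta_{\rho}^{1}\omega(x_{1},x_{2},x_{3})=-\omega([x_{1},x_{2},x_{3}]_{\rho})$, expands the induced bracket, pulls the scalars $\mathrm{str}\rho(x_{i})$ through $\omega$ by linearity, and identifies each $-\omega([x_{j},x_{k}])$ with $\delta^{1}\omega(x_{j},x_{k})$. No substantive difference.
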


\begin{proof} Let $\omega\in C^{1}(\mathfrak{g},K), x_{1},x_{2},x_{3}\in\mathfrak{g},$  then
\begin{align*}
&\delta_{\rho}^{1}\omega(x_{1},x_{2},x_{3})=
-\omega([x_{1},x_{2},x_{3}]_{\rho})=-
\mathrm{str}\rho(x_{1})\omega(x_{2},x_{3})\\&+(-1)^{|x_{1}||x_{2}|}
\mathrm{str}\rho(x_{2})\omega(x_{1},x_{3})
-(-1)^{|x_{3}|(|x_{1}|+|x_{2}|)}\mathrm{str}\rho(x_{3})\omega(x_{1},x_{2})
\\=& \mathrm{str}\rho(x_{1})\delta^{1}\omega(x_{2},x_{3})-(-1)^{|x_{1}||x_{2}|}
\mathrm{str}\rho(x_{2})\delta^{1}\omega(x_{1},x_{3})
+(-1)^{|x_{3}|(|x_{1}|+|x_{2}|)}\mathrm{str}\rho(x_{3})\delta^{1}\omega(x_{1},x_{2}).
\end{align*}
\end{proof}

\begin{proposition} Let $\varphi_{1}, \varphi_{2}\in Z^{2}_{0}(\mathfrak{g},K).$ If $\varphi_{1}, \varphi_{2}$ are in the same cohomology class, then
$\psi_{1}, \psi_{2}$ defined by
\begin{align*}
\psi_{i}(x_{1},x_{2},x_{3})&=\mathrm{str}\rho(x_{1})\varphi_{i}(x_{2},x_{3})-(-1)^{|x_{1}||x_{2}|}
\mathrm{str}\rho(x_{2})\varphi_{i}(x_{1},x_{3})
\\&+
(-1)^{|x_{3}|(|x_{1}|+|x_{2}|)}\mathrm{str}\rho(x_{3})\varphi_{i}(x_{1},x_{2}), i=1,2,
\end{align*}
are  in the same cohomology class.
\end{proposition}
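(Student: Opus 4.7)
The plan is to exploit linearity of the map $\varphi \mapsto \psi_\varphi$ (defined by the stated formula) together with the preceding lemma, which is essentially tailor-made for this proposition. Set $\varphi = \varphi_2 - \varphi_1$; by hypothesis there exists $\omega \in C^1(\mathfrak{g},K)$ such that $\varphi = \delta^1 \omega$. Observe that the assignment $\varphi \mapsto \psi$ is $K$-linear in $\varphi$, so $\psi_2 - \psi_1 = \psi_{\varphi_2 - \varphi_1}$ is the trilinear form obtained by substituting $\delta^1\omega$ for $\varphi_i$ in the defining formula.

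Next I would invoke the preceding lemma. Written out, that lemma says precisely that
\begin{align*}
\delta^1_\rho \omega(x_1,x_2,x_3) &= \mathrm{str}\rho(x_1)\,\delta^1\omega(x_2,x_3) - (-1)^{|x_1||x_2|}\mathrm{str}\rho(x_2)\,\delta^1\omega(x_1,x_3) \\
&\quad + (-1)^{|x_3|(|x_1|+|x_2|)}\mathrm{str}\rho(x_3)\,\delta^1\omega(x_1,x_2).
\end{align*}
Comparing this identity with the expression obtained in the previous step for $\psi_2 - \psi_1$, the two right-hand sides coincide term by term. Therefore $\psi_2 - \psi_1 = \delta^1_\rho \omega$, which exhibits $\psi_2 - \psi_1$ as a coboundary in the scalar cohomology complex of the induced algebra $\mathfrak{g}_\rho$.

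Finally I would remark that $\omega \in C^1(\mathfrak{g},K) = C^1(\mathfrak{g}_\rho,K)$ (both are just linear forms on $\mathfrak{g}$), so the above equality is meaningful in the induced cohomology; hence $\psi_1$ and $\psi_2$ lie in the same cohomology class of $\mathfrak{g}_\rho$. There is no genuine obstacle here: the only potentially subtle point is verifying that the cochain $\omega$ whose coboundary realises $\varphi_2 - \varphi_1$ in the Hom-Lie complex can be reused as a cochain on $\mathfrak{g}_\rho$, but this is immediate because both $1$-cochain spaces are just $\mathrm{Hom}(\mathfrak{g},K)$. Thus the proof is a one-line application of linearity plus the preceding lemma.
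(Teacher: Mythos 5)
Your proposal is correct and follows essentially the same route as the paper: the paper likewise expands $\psi_2-\psi_1$, collects terms into $(\varphi_2-\varphi_1)=\delta^1\omega$, and applies the preceding lemma to identify the result with $\delta^1_\rho\omega$. The only cosmetic difference is that you phrase the first step as linearity of $\varphi\mapsto\psi_\varphi$ while the paper writes the term-by-term computation out explicitly.
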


\begin{proof} Let $\varphi_{1}, \varphi_{2}\in Z^{2}_{0}(\mathfrak{g},K)$ be two cocycles in the same cohomology class, that is
$\varphi_{2}-\varphi_{1}\in \delta^{1}\omega, \omega\in C^{1}(\mathfrak{g},K),$
and \begin{align*}
\psi_{i}(x_{1},x_{2},x_{3})&=\mathrm{str}\rho(x_{1})\varphi_{i}(x_{2},x_{3})-(-1)^{|x_{1}||x_{2}|}
\mathrm{str}\rho(x_{2})\varphi_{i}(x_{1},x_{3})
\\&+
(-1)^{|x_{3}|(|x_{1}|+|x_{2}|)}\mathrm{str}\rho(x_{3})\varphi_{i}(x_{1},x_{2}), i=1,2.
\end{align*}
Then we have
\begin{align*}
&\psi_{2}(x_{1},x_{2},x_{3})-\psi_{1}(x_{1},x_{2},x_{3})\\=&
\mathrm{str}\rho(x_{1})\varphi_{2}(x_{2},x_{3})-(-1)^{|x_{1}||x_{2}|}
\mathrm{str}\rho(x_{2})\varphi_{2}(x_{1},x_{3})+(-1)^{|x_{3}|(|x_{1}|+|x_{2}|)}\mathrm{str}\rho(x_{3})\varphi_{2}(x_{1},x_{2})
\\&-\mathrm{str}\rho(x_{1})\varphi_{1}(x_{2},x_{3})+(-1)^{|x_{1}||x_{2}|}
\mathrm{str}\rho(x_{2})\varphi_{1}(x_{1},x_{3})-(-1)^{|x_{3}|(|x_{1}|+|x_{2}|)}\mathrm{str}\rho(x_{3})\varphi_{1}(x_{1},x_{2})
\\=&\mathrm{str}\rho(x_{1})(\varphi_{2}-\varphi_{1})(x_{2},x_{3})-(-1)^{|x_{1}||x_{2}|}
\mathrm{str}\rho(x_{2})(\varphi_{2}-\varphi_{1})(x_{1},x_{3})
\\&+(-1)^{|x_{3}|(|x_{1}|+|x_{2}|)}\mathrm{str}\rho(x_{3})(\varphi_{2}-\varphi_{1})(x_{1},x_{2})
\\=&\mathrm{str}\rho(x_{1})\delta^{1}\omega(x_{2},x_{3})-(-1)^{|x_{1}||x_{2}|}
\mathrm{str}\rho(x_{2})\delta^{1}\omega(x_{1},x_{3})
+(-1)^{|x_{3}|(|x_{1}|+|x_{2}|)}\mathrm{str}\rho(x_{3})\delta^{1}\omega(x_{1},x_{2})
\\=&\delta_{\rho}^{1}\omega(x_{1},x_{2},x_{3}).
\end{align*}
That means that $\psi_{1}, \psi_{2}$ are in the same cohomology class.
\end{proof}

\end{document}